\theoremstyle{plain}  
\newtheorem{thm}{Theorem}[section]
\newtheorem{lem}[thm]{Lemma}
\newtheorem{prop}[thm]{Proposition}
\newtheorem{cor}[thm]{Corollary}
\theoremstyle{definition}  
\newtheorem{defn}[thm]{Definition}
\newtheorem{ex}[thm]{Example}
\theoremstyle{remark}  
\newtheorem{rem}[thm]{Remark}
\newcommand{\R}{{\mathbb{R}}}
\newcommand{\Z}{{\mathbb{Z}}}
\newcommand{\cX}{{\mathcal X}}
\newcommand{\phom}{H}
\newcommand{\pdiag}{D}
\newcommand{\dbn}{d_{\rm b}}
\newcommand{\dgh}{d_{\rm GH}}
\newcommand{\dhaus}{d_{\rm H}}
\newcommand{\dinfty}{d_{\infty}}
\newcommand{\rank}{\hbox{\rm rank}}
\newcommand{\image}{\mathop{\rm im}}
\newcommand{\kernel}{\mathop{\rm ker}}
\newcommand{\realpositive}{\mathbb{R}_{\geq 0}}
\definecolor{gray85}{gray}{0.85} 
\definecolor{gray8}{gray}{0.8} 
\definecolor{gray7}{gray}{0.7} 
\definecolor{gray6}{gray}{0.6} 
\definecolor{gray5}{gray}{0.5} 
\definecolor{gray4}{gray}{0.4} 
\definecolor{gray35}{gray}{0.35} 
\numberwithin{equation}{section}
\def\pivot{\textrm{pivot}}
\newcommand{\delaunay}{{\rm Del}}
\newcommand{\vr}{{\rm VR}}
\newcommand{\alp}{{\rm Alp}}
\begin{document}

\begin{frontmatter}

\title{Continuation of Point Clouds via Persistence Diagrams}

\author[usp]{Marcio Gameiro} 
\ead{gameiro@icmc.usp.br}

\author[aimr]{Yasuaki Hiraoka}
\ead{hiraoka@wpi-aimr.tohoku.ac.jp}

\author[aimr]{Ippei Obayashi}
\ead{ippei.obayashi.d8@tohoku.ac.jp}

\address[usp]{Instituto de Ci\^{e}ncias Matem\'{a}ticas e de Computa\c{c}\~{a}o,
Universidade de S\~{a}o Paulo, Caixa Postal 668, 13560-970, S\~{a}o Carlos, SP, Brazil}

\address[aimr]{WPI-Advanced Institute for Materials Research (WPI-AIMR), Tohoku
University, 2-1-1 Katahira, Aoba-ku, Sendai, 980-8577 Japan}

\begin{abstract}
In this paper, we present a mathematical and algorithmic framework for the continuation of point clouds
by persistence diagrams. A key property used in the method is that the persistence map, which assigns
a persistence diagram to a point cloud, is differentiable. This allows us to apply the Newton-Raphson
continuation method in this setting. Given an original point cloud $P$, its persistence diagram $\pdiag$,
and a target persistence diagram $\pdiag'$, we gradually move from  $\pdiag$ to $\pdiag'$, by successively
computing intermediate point clouds until we finally find a point cloud $P'$ having $\pdiag'$ as its persistence
diagram. Our method can be applied to a wide variety of situations in topological data analysis where it is
necessary to solve an inverse problem, from persistence diagrams to point cloud data.
\end{abstract}


\begin{keyword}
  Point Cloud \sep
  Persistent Homology \sep
  Persistence Diagram \sep
  Continuation
\end{keyword}

\end{frontmatter}

\section{Introduction}
\label{sec:introduction}

Let $P$ be a finite set of points in $\R^L$ given by
\begin{equation}
\label{eq:pointclouds}
P=\{u_i\in \R^L\mid i=1\dots,M\}.
\end{equation}
We call $P$ a point cloud, following the convention in topological data analysis (TDA) \cite{carlsson,eh}. TDA
provides us tools to study the ``shape" of $P$. Among them, persistent homology \cite{elz,zc} is one of the most
useful tools, and it is now applied into various practical applications, e.g., amorphous solids \cite{glass_letter,iop},
proteins \cite{protein}, and sensor networks \cite{ghrist} (see also \cite{carlsson} and references therein).

Persistent homology can be regarded as a collection of maps, called \emph{persistence maps} in this paper,
from $P$ to a finite set  $D_\ell$, for $\ell=0,1,\cdots$, in the extended plane $\bar{\R}^2=\bar{\R}\times\bar{\R}$,
where $\bar{\R}=\R\cup\{\infty\}$. The set $D_\ell$ is called \emph{persistence diagram} and it encodes the
$\ell$-dimensional topological features of $P$ with metric information (precise definitions are given in
Section~\ref{sec:ph}).

In many applications, the point cloud $P$ have an intricate ``shape'' or structure, and, in this situations,
persistence diagrams are used to provide the ``essential'' topological features of $P$. For example, in the
papers \cite{glass_letter,iop}, the authors study hierarchical geometric structures in several amorphous solids.
In such a case, $P$ is given by an atomic configuration of an amorphous solid and consists of thousands of
points in $\R^3$ obtained by molecular dynamics simulations. It is a difficult task to directly study the geometry
and physical properties of the amorphous solid from $P$ due to its immense size. Hence, a key observation
of their work is that the persistence diagrams of the atomic configurations can capture essential geometric
information of the amorphous solids. From this significant property, using persistence diagrams they obtain
various physical properties of the solid, such as, finding new hierarchical ring structures, decompositions of
first sharp diffraction peaks, mechanical responses, etc.

Figure~\ref{fig:glass_pd} shows a schematic representation of $D_1$ for silica glass, $P$, studied in
\cite{glass_letter} (this corresponds to Figure~1 in that paper). They show that the presence of curves in
$D_1$ precisely distinguishes the amorphous state from liquid and crystalline states. It means that the normal
directions to these curves express characteristic geometric constraints on atomic configurations of amorphous
states. Therefore, changing $D_1 \Rightarrow D'_1$ along a normal direction (e.g., black to red in
Figure~\ref{fig:glass_pd}) and tracing the corresponding deformation $P \Rightarrow P'$ in the atomic
configurations clarify the geometric origin of rigidity in the amorphous solid, which is currently an important
problem in physics and material sciences. For this purpose, we need to solve an inverse problem: given $D'_1$
find $P'$ in some appropriate setting, and this is the main subject of this paper.
\begin{figure}[ht]
\centering
\includegraphics[width=0.6\hsize]{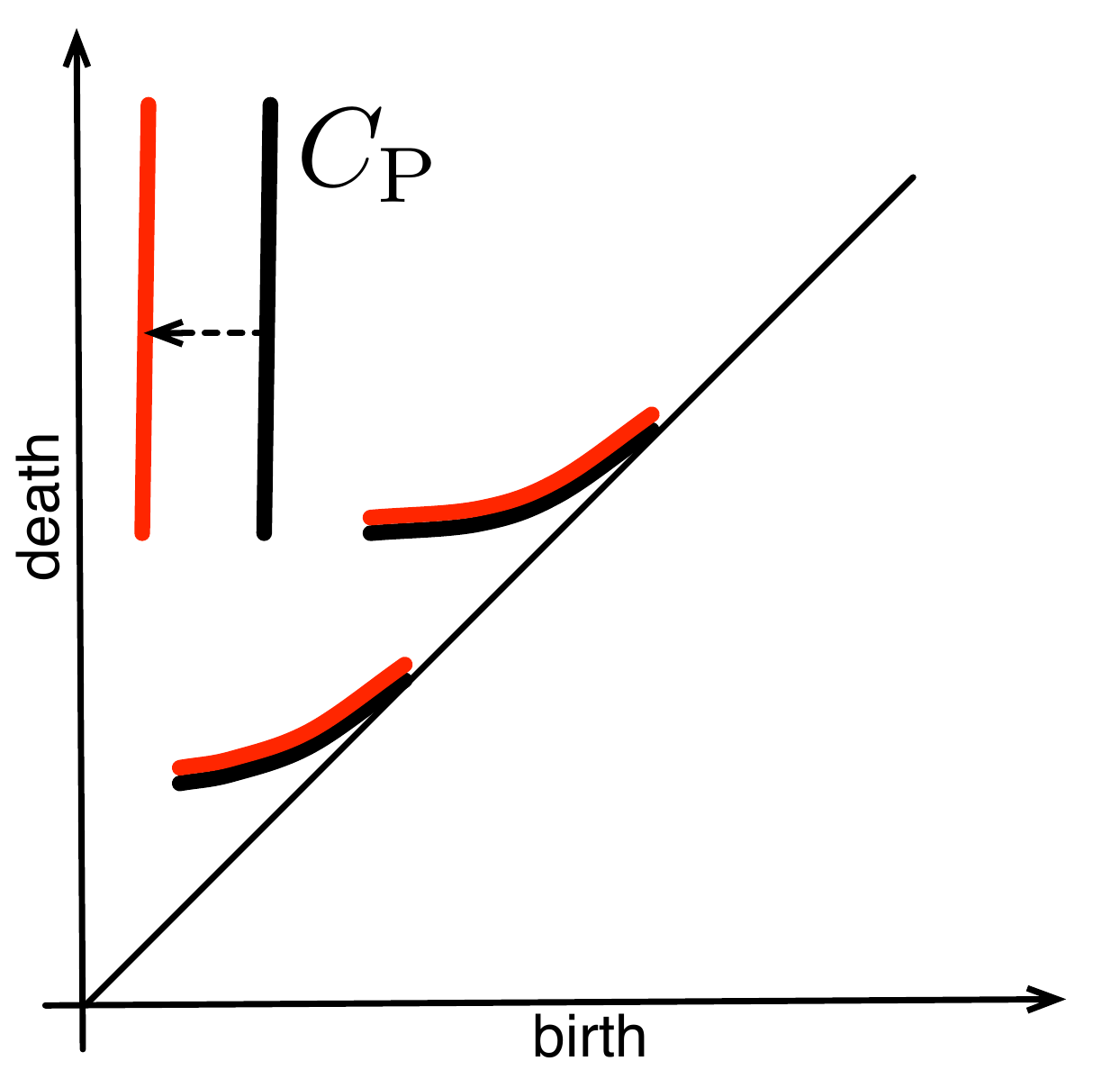}
\caption{Schematic representation of persistence diagrams. \textbf{Black:} $D_1$ for silica glass
(see \cite{glass_letter} for details). \textbf{Red:} A target persistence diagram $D_1'$ to study the
geometric constraints generating the curve $C_{\rm P}$. }
\label{fig:glass_pd}
\end{figure}

In this paper, we present a mathematical and algorithmic framework for solving inverse problems of persistence
diagrams. Our method is based on the continuation method \cite{continuation1,continuation}, which was originally
developed in numerical bifurcation theory of dynamical systems, applied to the setting of persistent homology.
More precisely, given a known correspondence between a point cloud $P$ and its persistence diagram
$D_\ell$, and a target persistence diagram $D_\ell'$, we develop a method to obtain a point cloud $P'$ which have
$D_\ell'$ as its persistence diagram. We first represent the persistence diagrams as points in some Euclidean space
and divide the line segment $\overline{D_\ell D_\ell'}$ into small segments
$D_\ell^{(0)}=D_\ell, D_\ell^{(1)}, \dots, D_\ell^{(N-1)}, D_\ell^{(N)}=D_\ell'$. Then, we solve an implicit equation defined
by the persistence diagram $D_\ell^{(i)}$ for each small segment, using the Newton-Raphson method, to obtain a new
point cloud $P^{(i)}$ having $D_\ell^{(i)}$ as its persistence diagram. By successively applying this procedure, we finally
obtain a desired point cloud $P' = P^{(N)}$.

We remark that the inverse problem from a persistence diagram $D_\ell$ to a point cloud $P$ is not well-posed in general.
Namely, it is possible to have multiple point clouds giving the same $D_\ell$ (non-uniqueness). Furthermore, the target
persistence diagram may not be located in the image of the persistence map (non-existence). Our approach to these
issues is as follows: Regarding non-uniqueness, at each step of the continuation we try to find the point cloud closest
to the point cloud on the previous step of the continuation. This assigns a minimality condition on the Euclidean norm of
the difference for persistence diagrams, and provides the uniqueness property. Furthermore, this minimality condition is
reasonable in the practical applications mentioned above, since our input atomic configuration is usually realized as
a minimum point of a certain energy landscape, and hence, finding the closest atomic configuration to the minimum point
is a natural choice.  Regarding non-existence, we take a practical approach. Namely, we try to apply our continuation
method and, if the computation is successful we conclude that our target persistence diagram is in the image of the
persistence map. If not, we investigate the reason for non-convergence of the Newton-Raphson method, which could
be due to non-existence, the presence of zero singular values, etc (see Section~\ref{sec:zsv}). We note that it is a very
challenging mathematical problem to study the image of the persistence map. 

This paper is organized as follows. The fundamental concepts, such as simplicial complex models used to represent
the point clouds and persistent homology, are introduced in Section~\ref{sec:settings}. In Section~\ref{sec:lppm} local
properties of persistence maps, especially differentiability, are studied in detail. Section~\ref{sec:cont} is the core of the
paper and is devoted to developing the continuation method of point clouds using persistence diagrams. In
Section~\ref{sec:computations}, we show some computational examples of the proposed method. Finally, in
Section~\ref{sec:conclusion}, we conclude with a list of future improvements to our continuation method.

\section{Simplicial Complex Model and Persistent Homology}
\label{sec:settings}

\subsection{Simplicial Complex Models}

Let $V$ be a finite set, and $\Sigma$ be a collection of subsets of $V$. A \emph{simplicial complex}
is defined by a pair $(V,\Sigma)$ satisfying (i) $\{v\}\in\Sigma$ for all $v\in V$ and
(ii) $\sigma\in\Sigma$ and $\tau\subset\sigma$ imply $\tau\in\Sigma$. An element
$\sigma\in\Sigma$ with $|\sigma|=\ell+1$ is called an \emph{$\ell$-simplex}.

Let $P$ be a point cloud (\ref{eq:pointclouds}) in $\R^L$. 
For $0<r<\infty$, we refer to the open ball with radius $r$ as an $r$-ball,
and denote it, with center $u_i$, by
\[
B_r(u_i)=\{x\in\R^L\mid ||x-u_i||< r\}.
\]

The \emph{Vietoris-Rips complex} $\vr(P,r)$ of $P$ with radius $r$ is defined as the simplicial
complex $(P,\Sigma)$ where the set $\Sigma$ of simplices is determined by
\[
\sigma \in \Sigma \Longleftrightarrow B_r(u_s)\cap B_r(u_t)\neq\emptyset,~~\forall u_s, u_t\in\sigma.
\]

The definition of the Vietoris-Rips complex depends only 
on the distances of all pairs in $P$. Hence the Vietoris-Rips complex 
has an advantage that it is computable even if $L$ is large.

The \emph{alpha complex} $\alp(P, r)$ \cite{eh,em} is another simplicial complex model of $P$ defined by using the
set of $r$-balls $B_r(u_i)$. A significant property of the alpha complex is the homotopy equivalence
\[
\bigcup_{i=1}^M {B_{r}(u_{i})} \simeq |\alp(P,r)|,
\]
where $|\alp(P,r)|$ is a geometric realization of $\alp(P,r)$. Because of this property, the alpha complex is widely
used in practical applications to analyze topological features in $P$. We note that the Vietoris-Rips complex does
not satisfy this property in general. 

Fast software for computing alpha complexes in dimensions $L=2,3$ is available, e.g., \cite{cgal}. 
In this paper, we use alpha complexes for $L=3$, while the case $L=2$ can be similarly treated.

For $0<\alpha <\infty$, an $\alpha$-ball $B$ is called $P$-empty if $B \cap P= \emptyset$. For $\ell=0,1,2,3$,
let $\delaunay_\ell(P)$ be the set of $\ell$-simplices $\sigma \subset P$  such that there exists an $P$-empty open
ball $B$ with $\partial B \cap P = \sigma$.  The \emph{Delaunay triangulation} $\delaunay(P)$ of $P$
is the simplicial complex whose simplices are given by $\delaunay_\ell(P)$ for $\ell = 0,1,2,3$.

The three dimensional alpha complex is defined as a subcomplex of the Delaunay triangulation $\delaunay(P)$.
For each $\sigma \in \delaunay_\ell(P)$, let $B_\sigma$ be the smallest open ball with $\sigma \subset \partial B_\sigma$,
and $\rho_\sigma$ be the radius of $B_\sigma$.  Let us define $G_{0,\alpha} = P$, and $G_{\ell,\alpha}$ to be the
set of $\ell$-simplices $\sigma\in\delaunay_\ell(P)$ such that $B_\sigma$ is $P$-empty and $\rho_\sigma < \alpha$. A simplex in
$\bigcup_\alpha G_{\ell,\alpha}$ is called an attaching $\ell$-simplex. The \emph{alpha complex} $\alp(P,\alpha)$ is defined
as the simplicial complex whose simplices are given by $G_{\ell,\alpha}$ and their faces for $\ell=0,1,2,3$.
From this definition, the alpha complex $\alp(P,\alpha)$ is a subcomplex of the Delaunay triangulation $\delaunay(P)$,
and we have that $\delaunay(P)=\bigcup_\alpha\alp(P,\alpha)$.

We note that both simplicial complex models define \emph{filtrations of finite type}
\begin{align*}
  \vr(P)&=(\vr(P,r))_{r\in \realpositive}, \\
  \alp(P)&=(\alp(P,r))_{r\in\realpositive},
\end{align*}
where $\realpositive$ is the set of nonnegative reals. 
Namely, we have that $\vr(P,r)\subset \vr(P,s)$ and $\alp(P,r)\subset \alp(P,s)$ for $r<s$ and
$\vr(P,r)=\vr(P,R)$ and $\alp(P,r)=\alp(P,R)$ for $r\geq R$, for a sufficiently large $R$
(called \emph{saturation time}). The radius parameter $r$ is also called time in this paper,
following the convention used in persistent homology.

\subsection{General Position}
Let us treat a point cloud $P$ as an ordered set induced by the index $i=1,\dots,M$. 
Then, we can assign a single variable $u=(u_1,\dots,u_M) \in \R^n$ to $P$, where $n=LM$.
Conversely, from a point $u\in\R^n$, an ordered subset $P$ in $\R^L$ with $|P|=M$ can be constructed. 
We explicitly denote this correspondence by $u(P)$ and $P(u)$, if necessary, and identify them in the following. 

Let $\cX=(X^r)_{r\in \realpositive}$ be a Vietoris-Rips or an alpha filtration. For each simplex $\sigma\in X^R$, we
can assign its birth radius $r_\sigma$ in the filtration $\cX$ by the infimum radius $r$ satisfying $\sigma\in X^r$.

In the Vietoris-Rips filtration $\vr(P)$, the birth radius $r_\sigma$ of a simplex $\sigma=\{u_{i_0},\dots, u_{i_\ell}\}$
is a function of $u_\sigma=(u_{i_0},\dots,u_{i_\ell})$ given by
\[
r_\sigma=\frac{1}{2}\max_{0\leq a<b\leq \ell}||u_{i_b}-u_{i_a}||.
\]
We call an edge $\{u_{i_a},u_{i_b}\}$ that attains the above maximum an \emph{attaching edge} of $\sigma$.

\begin{defn}
\label{definition:vr_general}
A configuration $u \in \R^n$ is said to be in \emph{Vietoris-Rips general position} if the following conditions
are satisfied:
\begin{enumerate}
\item[(i)] $u_i \neq u_j$ for any $i\neq j$,
\item[(ii)] $r_\tau\neq r_{\tau'}$ for any attaching edges $\tau\neq\tau'$.
\end{enumerate}
The open set consisting of the points in Vietoris-Rips general position is denoted by $U_{\vr}$.
\end{defn}

In the alpha filtration $\alp(P)$, each simplex $\sigma$ appears as either an attaching simplex or a simplex
attached by some attaching simplex $\tau\supset \sigma$. In the latter case, the birth radius $r_\sigma$ is
given by $r_\sigma = r_\tau$.
\begin{defn}
\label{definition:alpha_general}
A configuration $u\in\R^n$ is said to be in \emph{alpha general position} if the following conditions are
satisfied:
\begin{enumerate}
\item[(i)] $u$ is in general position in the sense of \cite{em}.
\item[(ii)] $r_\tau\neq r_{\tau'}$ for any attaching simplices $\tau\neq\tau'$ except $0$-simplices.
\end{enumerate}
The open set consisting of the points in alpha general position is denoted by $U_{\alp}$.
\end{defn}

We note that, in both Vietoris-Rips and alpha filtrations, the condition (ii) implies that an attaching
simplex is uniquely determined by its birth radius.

\subsection{Persistent Homology}\label{sec:ph}
We briefly review the definition of persistent homology as a graded module on a monoid ring.
Let $\cX=(X^r)_{r\in\realpositive}$ be a filtration of finite type with a saturation time $R$. 
For each $X^r$, let us denote by $X^r_\ell$ the set
of $ \ell$-simplices in $X^r$. In the following, we fix an orientation for each simplex $\sigma=\{u_{i_0},\dots,u_{i_\ell}\}$ 
by $i_0<\cdots<i_\ell$,
and denote the oriented simplex by $\langle\sigma\rangle=\langle u_{i_0}\dots u_{i_\ell}\rangle$.

Let $k$ be a field, and let us treat $\realpositive$ with a monoid structure
induced by the addition $+$. Let $k[\realpositive]$ be a monoid ring. That is, $k[\realpositive]$ is a vector
space of formal linear combinations of elements of $\realpositive$ equipped with a ring structure
\[
(a_1r_1)\cdot (a_2r_2)=(a_1a_2)(r_1+r_2)
\]
for $a_1, a_2 \in k$ and $r_1, r_2 \in \realpositive$.

In the following, the elements in $k[\realpositive]$ are expressed by linear combinations of formal monomials $az^r$,
where $a\in k$, $r\in \realpositive$, and $z$ is an indeterminate. Then, the product of two elements are
defined by linear extension of 
\[
a_1z^{r_1}\cdot a_2z^{r_2}=a_1a_2z^{r_1+r_2}.
\]

Let us denote by $C_\ell(X^r)$ the $k$-vector space spanned by the $\ell$-simplices in $X^r_\ell$. 
The $\ell$-th chain group $C_\ell(\cX)$ is defined as a graded module on the monoid ring
$k[\realpositive]$ by taking a direct sum
\[
C_\ell(\cX)=\bigoplus_{r\in\realpositive}C_\ell (X^r)=
\{(c_r)\mid c_r\in C_\ell(X^r)\},
\]
where the action of a monomial $z^s$ on $C_\ell(\cX)$ is given by the right shift operator
\[
z^s\cdot (c_r)=(c'_r)~~{\rm with}~c'_r=\left\{
\begin{array}{cl}
c_{r-s},&r\geq s,\\
0,&r<s.
\end{array}
\right.
\]
For a simplex $\sigma$, let us define
\[
\llangle \sigma \rrangle = (c_r),~~~
c_r=\left\{\begin{array}{cl}
\langle\sigma\rangle,&r=r_{\sigma},\\
0,&r\neq r_{\sigma}.
\end{array}
\right. 
\]

We note that $\Xi_\ell=\{\llangle \sigma \rrangle \mid \sigma\in X^R_\ell\}$
forms a basis of $C_\ell(\cX)$. The boundary map $\partial_\ell: C_\ell(\cX)\rightarrow C_{\ell-1}(\cX)$ is
defined by linear extension of
\begin{equation*}
\label{eq:boundarymap}
\partial_\ell\llangle \sigma\rrangle=\sum_{j=0}^\ell(-1)^jz^{r_\sigma-r_{\sigma_j}}\llangle \sigma_j\rrangle,
\end{equation*}
where $\langle\sigma_j\rangle=\langle u_{i_0}\dots\widehat{u_{i_j}}\dots u_{i_\ell}\rangle$ is a face of
$\langle\sigma\rangle=\langle u_{i_0}\dots u_{i_\ell}\rangle$,
and $\widehat{a}$ means the removal of the vertex $a$.

The cycle group $Z_\ell(\cX)$ and the boundary group $B_\ell(\cX)$ in $C_\ell(\cX)$ are defined by
\[
Z_\ell(\cX)=\kernel \partial_\ell,~~~B_\ell(\cX)=\image\partial_{\ell+1}.
\]
It follows from $\partial_\ell\circ\partial_{\ell+1}=0$ that we have $B_\ell(\cX)\subset Z_\ell(\cX)$.
Then, the $\ell$-th \emph{persistent homology} is defined by
\[
\phom_\ell(\cX)=Z_\ell(\cX)/B_\ell(\cX).
\]

The following theorem is known as the structure theorem of persistent homology. 
\begin{thm}[\cite{zc}]
There uniquely exist indices $s,t\in\Z_{\geq 0}$ and $(b_i,d_i)\in\realpositive^2$, for $i=1,\dots,s$,
with $b_i<d_i$ and $b_i\in\realpositive$, for $i=s+1,\dots,s+t$, such that the following isomorphism
holds
\begin{equation}
\label{eq:decomposition}
\phom_\ell(\cX)\simeq\bigoplus_{i=1}^s
\left((z^{b_i})\biggl/(z^{d_i})
\right)\oplus\bigoplus_{i=s+1}^{s+t}(z^{b_i}),
\end{equation}
where $(z^a)$ expresses an ideal in $k[\realpositive]$ generated by the monomial $z^a$. 
When $s$ or $t$ is zero, the corresponding direct sum is ignored.
\end{thm}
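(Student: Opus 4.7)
The plan is to reduce the statement to the classical structure theorem for finitely generated graded modules over the graded PID $k[z]$. The finite-type hypothesis is essential: let $0\le r_1<r_2<\cdots<r_N=R$ be the finitely many distinct birth radii arising in $\cX$, so that the filtration is constant on each interval $[r_i,r_{i+1})$. In particular $C_\ell(\cX)$ is a free graded $k[\realpositive]$-module on the finite basis $\Xi_\ell$, and by the boundary formula every matrix entry of $\partial_\ell$ is a single monomial whose exponent is a difference of critical radii.

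First I would discretize the grading. Since the filtration only changes at the $N$ critical radii, the assignment $r_i\mapsto i$ identifies $\phom_\ell(\cX)$ with a finitely generated $\Z_{\ge 0}$-graded module $M$ over the polynomial ring $k[z]$, where $z$ acts by the map induced by the inclusion $X^{r_i}\subset X^{r_{i+1}}$. The original $k[\realpositive]$-module is recovered from $M$ by reinstating the critical radius $r_i$ in place of the integer grade $i$.

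Next I would apply the structure theorem for finitely generated graded modules over the graded PID $k[z]$. This gives a decomposition of $M$ as a direct sum of shifted free summands $k[z](-a)$ and shifted torsion summands $k[z]/(z^{n})(-a)$, where $(-a)$ denotes the shift moving the generator into grade $a$. Reinterpreting each shifted free summand as a persistent class born at time $r_a$ and never dying, and each shifted torsion summand as a class born at $r_a$ and dying at $r_{a+n}$, yields precisely the claimed decomposition
\[
\phom_\ell(\cX)\simeq\bigoplus_i(z^{b_i})/(z^{d_i})\oplus\bigoplus_j(z^{b_j}).
\]

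The main obstacle is uniqueness, since the structure theorem yields unique invariants only after one shows the summands are intrinsically determined by $\cX$. I would invoke the bivariate persistent Betti numbers $\beta^{r,s}_\ell:=\dim_k\image\bigl(\phom_\ell(X^r)\to\phom_\ell(X^s)\bigr)$ for $r\le s$, which are defined directly from $\cX$ without reference to any decomposition. A direct count using any particular decomposition yields
\[
\beta^{r,s}_\ell=\#\{i:b_i\le r,\ d_i>s\}+\#\{j:b_j\le r\},
\]
and a standard inclusion-exclusion on $(r,s)$ then recovers both multisets $\{(b_i,d_i)\}$ and $\{b_j\}$ from the function $(r,s)\mapsto\beta^{r,s}_\ell$, establishing uniqueness.
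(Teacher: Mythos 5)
The paper does not prove this theorem --- it is quoted directly from \cite{zc} --- so there is no internal proof to compare against; your argument is precisely the standard proof from that reference (discretize at the finitely many critical radii, apply the structure theorem for finitely generated graded modules over the graded PID $k[z]$, translate free and torsion summands back into the pairs $(b_i,\infty)$ and $(b_i,d_i)$, and recover uniqueness from the persistent Betti numbers $\beta^{r,s}_\ell$), and it is correct. The one step worth phrasing more carefully is the discretization: since the gaps $r_{i+1}-r_i$ are unequal, the re-indexing $r_i\mapsto i$ is not a monoid homomorphism $\realpositive\to\Z_{\geq 0}$, so rather than literally transporting the $k[\realpositive]$-module structure one should pass through the underlying persistence module --- the finite diagram of vector spaces $\phom_\ell(X^{r_1})\to\cdots\to\phom_\ell(X^{r_N})$ with the maps induced by inclusion --- which is equivalent to a finitely generated graded $k[z]$-module, and only afterwards reinstate the radii $r_i$ as the grades of the generators and relations.
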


The $\ell$-th \emph{persistence diagram} $\pdiag_\ell(\cX)$ of $\cX$ is
defined as the multiset in $\bar{\R}^2$ determined from the decomposition (\ref{eq:decomposition}) by
\begin{equation}
\label{eq:pd}
\pdiag_\ell(\cX)=\{(b_i,d_i)\mid i=1,\dots,s+t\},
\end{equation}
where $d_i=+\infty$ for $i=s+1,\dots,s+t$. The pair $(b_i, d_i)$ is called a \emph{birth-death pair} in
the $\ell$-th persistence diagram, and $b_i$, $d_i$ are called, respectively, the
\emph{birth} and \emph{death times} of the pair.

\section{Local Properties of the Persistence Map}
\label{sec:lppm}

\subsection{The Persistence Map}

Let $\pdiag_\ell(\cX_P)$ be the persistence diagram (\ref{eq:pd}) of a filtration $\cX_P$ constructed
from a finite set $P$. By choosing the birth and death times in (\ref{eq:pd}) that
are finite, we can express $\pdiag_\ell(\cX_P)$ as a point
\[
v=(b_1,d_1,\dots,b_s,d_s,b_{s+1},\dots,b_{s+t})\in\R^m,
\]
where $m=2s+t$. Then, recalling the identification of $P$ and $u \in \R^n$, we can regard the persistent
homology as giving a single correspondence
\begin{equation}
\label{eq:functionalform}
\R^n\ni u \longmapsto v \in \R^m.
\end{equation}
In this section, we define an appropriate open set $O \subset \R^n$ such that this single correspondence is
extended to a map $f \colon O \subset \R^n \rightarrow \R^m$ computing persistence diagrams with $f(u)=v$. 

It should be noted that the dimension $m$ may change for a different choice
of $u\in \R^n$. For extending the single correspondence to a map into $\R^m$, we use a recent result in \cite{cdo}.
Let us first recall some definitions.

For a metric space $(X,d_X)$, the \emph{Hausdorff distance} $\dhaus$ between two subsets $A,B\subset X$
is defined by 
\[
\dhaus(A,B) = \max\{\sup_{a\in A}d_X(a,B), \sup_{b\in B}d_X(b,A)\},
\]
where $d_X(a,B) =\inf_{b\in B}d_X(a,b)$ and $d_X(b,A)$ is defined symmetrically. The
\emph{Gromov-Hausdorff distance} $\dgh$ between two metric spaces $(X,d_X)$ and $(Y,d_Y)$ is defined by 
\[
\dgh(X,Y)=\inf_{f,g} \dhaus(f_{X\rightarrow Z}(X),g_{Y\rightarrow Z}(Y)),
\]
where $f_{X\rightarrow Z}$ and $g_{Y\rightarrow Z}$ denote isometric embeddings of $X$ and $Y$ into a
metric space $(Z,d_Z)$, respectively, and the Hausdorff distance between $f_{X\rightarrow Z}(X)$ and
$g_{Y\rightarrow Z}(Y)$ is measured using the metric $d_Z$.

We also recall that the \emph{bottleneck distance} $\dbn$ between two persistence diagrams $D$ and $D'$
is defined by
\[
\dbn(D,D')=\inf_\gamma\sup_{p\in \widehat{D}}||p-\gamma(p)||_\infty,
\]
where $\widehat{D}$ is a multiset consisting of the points in $D$ and the points on the diagonal $\Delta$
with multiplicity $+\infty$, and $\gamma$ is a bijection between $\widehat{D}$ and $\widehat{D'}$. 
Here, we define the norm $||p||_\infty=d_\infty(p,0)$ by the distance $d_\infty(p,q)=\max\{|p_1-q_1|,|p_2-q_2|\}$ on $\bar{\R}^2$.

For a multiset $D\subset\bar{\R}^2$, let 
\[
	T_\epsilon(D)=\{p\in D\mid \dinfty(p,\Delta)\geq \epsilon\}
\]
be the multiset defined by an $\epsilon$-truncation
of $D$ from the diagonal.

\begin{lem}
\label{lemma:nondecreasing}
Let $D$ be a persistence diagram and $\delta=d_\infty(D,\Delta)$. If
$\dbn(D,D')<\delta$, then $|D|\leq |D'|$. Furthermore, if $\dbn(D,D')<\epsilon<\delta/2$, then $|D|=|T_\epsilon(D')|$.
\end{lem}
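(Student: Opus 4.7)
The plan is to unpack the definition of the bottleneck distance and use a near-optimal bijection $\gamma\colon \widehat{D}\to\widehat{D'}$ together with the hypothesis that every off-diagonal point of $D$ sits at $\ell^\infty$-distance at least $\delta$ from the diagonal $\Delta$. The main idea is that a short bottleneck matching cannot move an off-diagonal point of $D$ onto $\Delta$, so $\gamma$ restricts to an injection $D\hookrightarrow D'$, which already yields the first assertion $|D|\leq|D'|$.

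More concretely, for the first part I would, given $\dbn(D,D')<\delta$, choose a bijection $\gamma\colon\widehat{D}\to\widehat{D'}$ realizing a supremum strictly less than $\delta$ (this is possible because the bottleneck distance is defined by an infimum and the inequality is strict). For any $p\in D$ one has $d_\infty(p,\Delta)\geq\delta$, so if $\gamma(p)$ belonged to $\Delta$ then $\|p-\gamma(p)\|_\infty\geq\delta$, contradicting the choice of $\gamma$. Hence $\gamma(D)\subset D'$, and since $\gamma$ is a bijection this restriction is injective, giving $|D|\leq|D'|$.

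For the second part, assume $\dbn(D,D')<\epsilon<\delta/2$ and pick $\gamma$ with $\sup_p\|p-\gamma(p)\|_\infty<\epsilon$. For $p\in D$, the triangle inequality gives
\[
d_\infty(\gamma(p),\Delta)\;\geq\;d_\infty(p,\Delta)-\|p-\gamma(p)\|_\infty\;>\;\delta-\epsilon\;>\;\epsilon,
\]
so $\gamma(p)\in T_\epsilon(D')$, producing an injection $D\hookrightarrow T_\epsilon(D')$. Conversely, for any $q\in T_\epsilon(D')$ we have $d_\infty(q,\Delta)\geq\epsilon$, so the preimage $\gamma^{-1}(q)$ cannot lie on $\Delta$ (otherwise $\|q-\gamma^{-1}(q)\|_\infty\geq\epsilon$, contradicting the choice of $\gamma$); hence $\gamma^{-1}(q)\in D$, yielding an injection $T_\epsilon(D')\hookrightarrow D$. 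Combining the two injections gives $|D|=|T_\epsilon(D')|$.

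There is no real obstacle here beyond being careful with two bookkeeping points: first, that the strict inequalities in the hypotheses let us pick an actual matching $\gamma$ achieving supremum strictly below the stated bound (rather than working with the infimum abstractly), and second, that $\widehat{D}$ and $\widehat{D'}$ contain the diagonal with infinite multiplicity, so the injections above use only off-diagonal matches and the counts $|D|$, $|T_\epsilon(D')|$ are unaffected by diagonal-to-diagonal pairings. The factor $\delta/2$ in the second statement is exactly what is needed to make $\delta-\epsilon>\epsilon$ in the triangle-inequality step, which is why the truncation threshold $\epsilon$ appears symmetrically on both sides of the argument.
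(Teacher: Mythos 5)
Your proof is correct and follows essentially the same route as the paper's: both arguments pick a near-optimal matching $\gamma$ and use the fact that a short matching cannot pair an off-diagonal point of $D$ with the diagonal (for the first claim) nor move points across the $\epsilon$-band around $\Delta$ (for the second). Your two injections $D\hookrightarrow T_\epsilon(D')$ and $T_\epsilon(D')\hookrightarrow D$ are exactly the paper's two assertions $d_\infty(\gamma(D),\Delta)>\epsilon$ and $T_\epsilon(D'\setminus\gamma(D))=\emptyset$, just phrased directly rather than by contradiction.
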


\begin{proof}
Suppose $|D|> |D'|$. Then, there exists a point in $D$ which is mapped to the diagonal $\Delta$ for any
bijection $\gamma: \widehat{D}\rightarrow \widehat{D'}$. This leads to $\delta\leq \dbn(D,D')$, implying the first statement. 

For the proof of the second statement, let $\gamma$ be a bijection $\gamma:\widehat{D}\rightarrow \widehat{D'}$
such that
\[
\dbn(D,D')\leq \sup_{p\in\widehat{D}}||p-\gamma(p)||_\infty<\epsilon.
\]
Suppose $\dinfty(\gamma(D),\Delta)\leq \epsilon$. Then, there exists $p\in D$ such that $\dinfty(\gamma(p),\Delta)\leq\epsilon$.
On the other hand, we have $\dinfty(p,\gamma(p))\leq \sup_{p\in \widehat{D}}||p-\gamma(p)||_\infty<\epsilon$. This
leads to the contradiction 
\[
\delta\leq \dinfty(p,\Delta)\leq \dinfty(p,\gamma(p))+\dinfty(\gamma(p),\Delta)<2\epsilon<\delta.
\]
Hence, we have $\dinfty(\gamma(D),\Delta)> \epsilon$. Moreover, we have $T_\epsilon(D'\setminus \gamma(D))=\emptyset$,
otherwise it gives the contradiction $\sup_{p\in\widehat{D}}||p-\gamma(p)||_\infty\geq \epsilon$.
These two properties show that $|D|=|T_\epsilon(D')|$.
\end{proof}

Now let us apply the result in \cite{cdo}. Let $P$ and $P'$ be two point clouds in $\R^L$ with
$|P|=|P'|=M$. Then, since $P$ and $P'$ are totally bounded, the inequalities 
\begin{equation}
\label{eq:stability1}
\dbn(\pdiag_\ell(\cX_P),\pdiag_\ell(\cX_{P'}))\leq \dgh(P,P')
\end{equation}
for Vietoris-Rips filtrations and
\begin{equation}\label{eq:stability2}
\dbn(\pdiag_\ell(\cX_P),\pdiag_\ell(\cX_{P'}))\leq \dhaus(P,P')
\end{equation}
for alpha filtrations hold by Theorem 5.2 and Theorem 5.6 in \cite{cdo}, respectively.

Let $\delta=\dinfty(\pdiag_\ell(\cX_P),\Delta)$ and, for $\epsilon<\delta/2$, let us set
\[
O^\epsilon_u=\{u'\in\R^n\mid \dgh(P(u),P'(u'))< \epsilon\}
\]
for Vietoris-Rips filtrations, and
\[
O^\epsilon_u=\{u'\in\R^n\mid \dhaus(P(u),P'(u'))< \epsilon\}
\]
for alpha filtrations. Then, it follows from Lemma~\ref{lemma:nondecreasing} that
$|\pdiag_\ell(\cX_{P(u)})|= |T_\epsilon(\pdiag_\ell(\cX_{P'(u')}))|$ for any $u'\in O^\epsilon_u$.
Hence, given a single persistence correspondence $(u,v)$, we can define a map
\[
\Phi: O^\epsilon_u\ni u' \longmapsto T_\epsilon(\pdiag_\ell(\cX_{P'(u')}))\in\R^m.
\]

At the end of this section, we show that $O^\epsilon_u$ is an open set. We first prove the
following lemma.
\begin{lem}
\label{lemma:gh_euclid_inequality}
Let $P=\{x_1,\dots,x_M\}$ and $Q=\{y_1,\dots,y_M\}$ be two point clouds in $\R^L$. Set $u=u(P) \in \R^n$
and $w = w(Q) \in \R^n$. Then,
\[
\dgh(P,Q) \leq \dhaus(P,Q)\leq d_{\R^n}(u,w).
\]
\end{lem}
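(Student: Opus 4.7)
The plan is to establish the two inequalities separately; both follow from unpacking the relevant definitions, and neither presents a substantive obstacle.

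For $\dgh(P,Q) \leq \dhaus(P,Q)$, I would apply the definition of the Gromov-Hausdorff distance as an infimum over pairs of isometric embeddings of $P$ and $Q$ into a common metric space $(Z,d_Z)$. Since $P$ and $Q$ are already subsets of $\R^L$, taking $(Z,d_Z)=(\R^L,\|\cdot\|)$ together with the identity inclusions is a valid choice, and the Hausdorff distance associated to this pair of embeddings is exactly $\dhaus(P,Q)$ as measured in $\R^L$. The infimum over all choices is therefore bounded above by this value.

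For $\dhaus(P,Q) \leq d_{\R^n}(u,w)$, I would exploit the ordering supplied by the identifications $u=u(P)=(x_1,\dots,x_M)$ and $w=w(Q)=(y_1,\dots,y_M)$. By definition,
\[
\dhaus(P,Q)=\max\Bigl\{\max_{1\leq i\leq M} d_{\R^L}(x_i,Q),\ \max_{1\leq j\leq M} d_{\R^L}(y_j,P)\Bigr\},
\]
since $P$ and $Q$ are finite. For each index $i$, the specific point $y_i\in Q$ witnesses $d_{\R^L}(x_i,Q)\leq \|x_i-y_i\|$, and symmetrically $d_{\R^L}(y_i,P)\leq \|x_i-y_i\|$. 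Therefore
\[
\dhaus(P,Q) \leq \max_{1\leq i\leq M}\|x_i-y_i\| \leq \Bigl(\sum_{i=1}^{M}\|x_i-y_i\|^2\Bigr)^{1/2} = d_{\R^n}(u,w),
\]
where the last inequality is the standard domination of the $\ell^\infty$ norm by the $\ell^2$ norm on $\R^M$.

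The only conceptual point, such as it is, is the matched pairing $(x_i,y_i)$ used in the second inequality: this is what makes the bound $d_{\R^n}(u,w)$ strictly stronger than $\dhaus(P,Q)$ in general, and it is only available because the index structure on $P$ and $Q$ lets us view them as ordered tuples rather than as abstract finite sets.
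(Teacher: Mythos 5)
Your proposal is correct and follows essentially the same route as the paper: the first inequality by taking the identity embeddings of $P$ and $Q$ into $\R^L$ in the definition of $\dgh$, and the second by bounding $\dhaus(P,Q)$ via the matched pairs $(x_i,y_i)$ and then dominating the resulting maximum by the Euclidean norm $d_{\R^n}(u,w)$. The only cosmetic difference is that the paper fixes the index achieving the Hausdorff maximum and bounds that single term, whereas you bound every term uniformly before taking the maximum; the content is identical.
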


\begin{proof}
By definition of the Gromov-Hausdorff distance, we have
\begin{align*}
\dgh(P,Q)&\leq \dhaus(P,Q) \\
&=\max\{\max_id_{\R^L}(x_i,Q),\max_id_{\R^L}(P,y_i)\}.
\end{align*}
Without loss of generality, we assume that $\dhaus(P,Q)$ is achieved by $x_i$. Then,
\begin{align*}
\dhaus(P,Q)&=\min_jd_{\R^L}(x_i,y_j)\leq d_{\R^L}(x_i,y_i) \\ 
&\leq \sqrt{\sum_{i}d_{\R^L}(x_i,y_i)^2}= d_{\R^n}(u,w),
\end{align*}
and this completes the proof.
\end{proof}

\begin{prop}
$O^\epsilon_u$ is an open set in $\R^n$.
\end{prop}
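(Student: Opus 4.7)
The plan is to show that the complement is closed or, equivalently, that every $u' \in O^\epsilon_u$ has an open ball in $\R^n$ around it contained in $O^\epsilon_u$. I will rely on two things: the triangle inequality for $\dgh$ (resp.\ $\dhaus$), and Lemma~\ref{lemma:gh_euclid_inequality}, which controls these set-distances by the Euclidean distance on $\R^n$.

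Fix $u' \in O^\epsilon_u$ and treat the two cases in parallel. In the Vietoris--Rips case, set $\eta := \epsilon - \dgh(P(u),P(u')) > 0$; in the alpha case, set $\eta := \epsilon - \dhaus(P(u),P(u')) > 0$. I claim that the open ball $B_\eta(u') \subset \R^n$ lies entirely in $O^\epsilon_u$. Indeed, for any $u'' \in B_\eta(u')$, using the triangle inequality for $\dgh$ followed by the chain $\dgh \leq \dhaus \leq d_{\R^n}$ from Lemma~\ref{lemma:gh_euclid_inequality}, I would estimate
\[
\dgh(P(u),P(u'')) \leq \dgh(P(u),P(u')) + \dgh(P(u'),P(u''))
\]
\[
\leq \dgh(P(u),P(u')) + d_{\R^n}(u',u'') < \dgh(P(u),P(u')) + \eta = \epsilon,
\]
so $u'' \in O^\epsilon_u$ in the Vietoris--Rips case. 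The alpha case is analogous, replacing $\dgh$ with $\dhaus$ throughout (the triangle inequality for $\dhaus$ on subsets of $\R^L$ is standard).

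There is no real obstacle here; the only mild point to justify is the triangle inequality for $\dgh$, which is a standard fact since $\dgh$ is a pseudometric on (isometry classes of) compact metric spaces, and all the point clouds $P(u)$ are finite and hence compact. Everything else is a direct application of the previous lemma, which is exactly why that lemma was stated first.
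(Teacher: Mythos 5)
Your proof is correct and is essentially identical to the paper's: both fix $u'\in O^\epsilon_u$, choose a radius with slack $\epsilon - \dgh(P(u),P(u'))$ (the paper calls it $\tau$), and combine the triangle inequality for $\dgh$ (resp.\ $\dhaus$) with Lemma~\ref{lemma:gh_euclid_inequality} to show the corresponding Euclidean ball lies in $O^\epsilon_u$. No substantive difference.
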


\begin{proof}
We consider the case of Rips filtrations.
Let us choose $u'\in O^\epsilon_u$ and set $P'=P'(u') \subset \R^L$.
We also take a positive real number $\tau>0$ such that
\[
\dgh(P,P')+\tau<\epsilon.
\]
We claim that the $\tau$-open neighborhood of $u'$ in $\R^n$ is contained in $O^\epsilon_u$.
For any $u''\in\R^n$ with $d_{\R^n}(u',u'')<\tau$, we have $\dgh(P',P'')\leq d_{\R^n}(u',u'')<\tau$
from Lemma \ref{lemma:gh_euclid_inequality}. Then, it follows from the triangle inequality that
\begin{align*}
\dgh(P,P'')&\leq\dgh(P,P')+\dgh(P',P'')\\
&<\dgh(P,P')+\tau<\epsilon.
\end{align*}
This proves the claim and, since $u'$ is arbitrary, this concludes that $O^\epsilon_u$ is an open set in $\R^n$.
We can prove the case for alpha filtrations by replacing $\dgh$ by $\dhaus$.

\end{proof}

\subsection{Decomposition of Persistence Map}
Let $\cX=(X^r)_{r\in\realpositive}$ be a Vietoris-Rips or alpha filtration with a saturation time $R$, and let us set 
$W_\ell=X^R_\ell$ and $W=\bigcup_\ell W_\ell$.
The correspondence (\ref{eq:functionalform}) is decomposed into two parts:
\[
u \overset{g}{\longmapsto} r = (r_\sigma)_{\sigma\in W} \overset{h}{\longmapsto} v,
\]
where $g$ constructs the simplicial complex filtration, and $h$ computes the persistence diagram. We extend the decomposition
$f = h \circ g$ of this single correspondence to that of a map $f$. To this aim, we need to construct a proper
subset in $O^\epsilon_u$ in which the set $W$ is invariant. 

In the case of Vietoris-Rips filtration, $W$ is given by all the faces of the $(M-1)$-simplex for $|P|=M$,
independently of the configuration $u$. Thus, the decomposition $f=h\circ g$ of the correspondence is
extended naturally to the map
\begin{equation*}
\label{eq:map_decomp}
O_{\vr} \overset{g}{\longrightarrow} \R^{d} \overset{h}{\longrightarrow} \R^m,
\end{equation*}
where $O_{\vr}=O^\epsilon_u\cap U_{\vr}$, $g : O_{\vr}\ni u \longmapsto r=(r_\sigma)_{\sigma\in W} \in \R^{d}$, and
$h: \R^{d}\ni r \longmapsto v \in \R^m$ with $d=|W|$.

On the other hand, in the case of alpha filtration, note that the set $W$ can generally change depending
on the configuration $u$. Recall that the alpha complex $\alp(P(u),\alpha)$ is a subcomplex of the
Delaunay complex $\delaunay(P(u))$ and $\delaunay(P(u))=\bigcup_\alpha\alp(P(u),\alpha)$. Hence, the set $W$
is given by $\delaunay(P(u))$ in this case. 

For a configuration $u$ satisfying the general position assumption, the Delaunay complex $\delaunay(P(u))$ is
stable with respect to small perturbations. Namely, we can construct an open neighborhood $\tilde{O}_u\subset \R^n$
of $u$ such that the Delaunay complex is invariant in this neighborhood, i.e., $\delaunay(P(u))=\delaunay(P'(u'))$ for all
$u'\in \tilde{O}_u$. Therefore, by setting $O_{\alp}=O^\epsilon_u\cap \tilde{O}_u \cap U_{\alp}$, we can extend the decomposition of
the map $f$ as
\[
O_{\alp} \overset{g}{\longrightarrow} \R^{d} \overset{h}{\longrightarrow} \R^m.  
\]
In \cite{bdg}, the authors study explicit bounds on the perturbations of $u$ for
which the Delaunay complex $\delaunay(P)$ is invariant. 

\begin{rem}
  Precisely speaking, $h$ is defined on the image of $g$.
\end{rem}

\begin{rem}
When we consider $\ell$-th persistence diagram $\pdiag_\ell(\cX)$, it is
sufficient to deal only with $W_{\ell-1}$, $W_\ell$, and $W_{\ell+1}$. 
\end{rem}

\subsection{Smoothness of Persistence Map}

\subsubsection{Vietoris-Rips filtration $g$}

\begin{lem}
\label{lemma:differentiable_g_rips}
On the open set $O_{\vr}$, the map $g$ is of class $C^\infty$.
\end{lem}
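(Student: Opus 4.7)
The plan is to show that each coordinate function $u \mapsto r_\sigma(u)$ of $g$ is $C^\infty$ on $O_{\vr}$, and then conclude that $g$ itself is $C^\infty$ since smoothness is a coordinate-wise property.

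First I would dispose of the trivial case of $0$-simplices $\sigma=\{u_i\}$, where $r_\sigma=0$ is constant. For a simplex $\sigma=\{u_{i_0},\dots,u_{i_\ell}\}$ with $\ell\geq 1$, the birth radius is
\[
r_\sigma(u)=\tfrac{1}{2}\max_{0\leq a<b\leq \ell}\|u_{i_b}-u_{i_a}\|.
\]
A maximum of smooth functions is not smooth in general, so the whole point is to argue that in a neighborhood of any $u\in O_{\vr}$ the maximum is attained at a single, locally constant pair $(a,b)$ — so $r_\sigma$ is locally given by one of the smooth summands.

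The key observation is that condition (ii) in Definition~\ref{definition:vr_general} forces all pairwise distances among the vertices of $P(u)$ to be distinct: indeed, every edge $\tau$ of the filtration is the attaching edge of itself (its birth radius equals half its length), so the condition $r_\tau\neq r_{\tau'}$ for all attaching edges $\tau\neq\tau'$ translates to $\|u_{i_b}-u_{i_a}\|\neq\|u_{j_b}-u_{j_a}\|$ for any two distinct pairs. I would combine this with condition (i), which guarantees $u_{i_a}\neq u_{i_b}$ for all $i_a\neq i_b$, so that each function $u\mapsto\|u_{i_b}-u_{i_a}\|$ is smooth in a neighborhood of $u$.

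By continuity of the finitely many functions $u\mapsto\|u_{i_b}-u_{i_a}\|$ and the strict inequalities guaranteed by general position, there is a neighborhood $U\subset O_{\vr}$ of $u$ on which the strict ordering of these pairwise distances is preserved. Consequently, for each simplex $\sigma$, the attaching pair $(a,b)=(a(\sigma),b(\sigma))$ is the same for every $u'\in U$, and on $U$ one has the explicit formula
\[
r_\sigma(u') = \tfrac{1}{2}\|u'_{i_{b(\sigma)}}-u'_{i_{a(\sigma)}}\|,
\]
which is $C^\infty$ on $U$ because $u_{i_a}\neq u_{i_b}$ throughout. Since this holds at every $u\in O_{\vr}$, each $r_\sigma$, and hence $g$, is $C^\infty$ on $O_{\vr}$.

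The only real subtlety is the step of upgrading the pointwise uniqueness of the attaching edge to a local statement; this is precisely what the general position assumption buys us, and without it $r_\sigma$ would only be a $\max$ of smooth functions and hence merely Lipschitz. Everything else is bookkeeping.
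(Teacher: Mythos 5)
Your proof is correct and follows essentially the same route as the paper: identify the attaching edge of each simplex, observe that general position makes it locally constant, and conclude that $r_\sigma$ is locally half a Euclidean distance between distinct points, hence $C^\infty$. You merely spell out the step the paper leaves implicit, namely that condition (ii) (all pairwise distances distinct, since every edge is its own attaching edge) plus continuity fixes the maximizing pair on a neighborhood of each $u$.
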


\begin{proof}
For a simplex $\sigma=\{u_{i_0},\dots,u_{i_\ell}\}$, let $\{u_{i_a},u_{i_b}\}$ be the attaching edge, i.e.,
\[
r_\sigma=\frac{1}{2}||u_{i_b}-u_{i_a}||.
\]
From the assumption of the Vietoris-Rips general position, $r_\sigma$ is continuously differentiable on
$O_{\vr}$ whose entries in the $i_a$-th and the $i_b$-th coordinates are given by
\[
\frac{\partial r_\sigma}{\partial u_{i_a}}=\frac{1}{2}\frac{u_{i_a}-u_{i_b}}{||u_{i_b}-u_{i_a}||},~~~
\frac{\partial r_\sigma}{\partial u_{i_b}}=\frac{1}{2}\frac{u_{i_b}-u_{i_a}}{||u_{i_b}-u_{i_a}||}
\]
and zero otherwise. It is obvious from the same argument that $r_\sigma$ is continuously
differentiable arbitrarily many times.
\end{proof}

%

\begin{rem}
It follows from the proof of Lemma~\ref{lemma:differentiable_g_rips} that breaking the Vietoris-Rips
general position assumption immediately makes that map $g$ loose its differentiability. 
\end{rem}

%
%
\subsubsection{Alpha filtration $g$}

\begin{lem}\label{lemma:differentiable_g_alpha}
On the open set $O_\alp$, the map $g$ is of class $C^\infty$.
\end{lem}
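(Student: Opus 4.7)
The plan parallels the Vietoris-Rips argument but must accommodate the configuration-dependent combinatorial type of the Delaunay triangulation. By the perturbation result of \cite{bdg} invoked in the discussion preceding the lemma, every $u \in O_\alp$ has a neighborhood on which the abstract Delaunay complex $\delaunay(P(u'))$ is invariant; in particular, the finite set $W$ is locally constant in $u$. It therefore suffices, for every fixed $u \in O_\alp$ and every $\sigma \in W$, to show that $u' \mapsto r_\sigma(u')$ is of class $C^\infty$ on a neighborhood of $u$.

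For a non-attaching simplex $\sigma$ one has $r_\sigma = r_\tau$, where $\tau \supsetneq \sigma$ is the unique attaching simplex whose empty ball $B_\tau$ brings $\sigma$ into the filtration. The alpha general-position condition (ii) of Definition~\ref{definition:alpha_general} forbids coincident birth radii among distinct attaching simplices, so this $\tau$ is also locally constant in $u$, reducing the problem to the attaching case. For an attaching $\ell$-simplex $\sigma$, the birth radius $r_\sigma$ equals the radius $\rho_\sigma$ of the smallest open ball with $\sigma$ on its boundary. This ball has its center in the affine hull of $\sigma$, equidistant from all vertices of $\sigma$, so $\rho_\sigma$ is the circumradius of $\sigma$ within its affine hull. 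The general-position condition (i) rules out affine degeneracies of any simplex in $\delaunay(P(u))$; hence the circumcenter and $\rho_\sigma^2$ form the unique solution of a non-singular linear system in the vertex coordinates of $\sigma$, and Cramer's rule expresses them as rational functions with nowhere-vanishing denominator on $O_\alp$. Since $\rho_\sigma^2$ is then smooth and strictly positive whenever $\ell \geq 1$, the birth radius $\rho_\sigma$ is $C^\infty$; the case $\ell = 0$ is trivial because $r_\sigma \equiv 0$, and for $\ell = 1$ the elementary formula $\rho_\sigma = \tfrac{1}{2}\|u_{i_1} - u_{i_0}\|$ handles the claim directly.

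The main obstacle, I expect, will be making the combinatorial stability statements fully rigorous: one must verify that, on a sufficiently small neighborhood of $u$, both $W$ and the map sending each $\sigma$ to its attaching ancestor are constant. The former is the content of \cite{bdg}, while the latter follows from condition (ii) of Definition~\ref{definition:alpha_general}, which prevents the attaching ancestor from switching under small perturbations. Once these stability facts are established, every coordinate of $g$ is smooth on $O_\alp$, and the lemma follows.
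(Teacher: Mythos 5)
Your proposal is correct and follows essentially the same route as the paper: reduce the birth radius of each simplex to the circumradius of its (locally constant, by general position) attaching simplex, and observe that this circumradius is a rational function of the vertex coordinates with nowhere-vanishing denominator on $O_\alp$. The paper simply writes out the explicit Edelsbrunner--M\"ucke determinant formulas for $\rho_\tau$ in place of your abstract Cramer's-rule argument, and leaves implicit the combinatorial stability of $W$ and of the attaching-simplex assignment that you spell out.
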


\begin{proof}
Let $\sigma$ be a simplex in the alpha filtration $\alp(P)$ and $\tau$ be its attaching simplex. Then, it follows from the definition
of $\alp(P)$ that the birth radius $r_\sigma$ is given by the radius $\rho_\tau$ of the smallest circumsphere of $\tau$. 

Let us denote by $u_i=(u_{i,1},u_{i,2},u_{i,3})\in \R^3$ the coordinate of each point $u_i$. Let $u_{i,0}$ and
$u_{i,4}$ be given by $u_{i,0}=1$ and $u_{i,4}=\sum_{j=1}^3 u_{i,j}^2$, respectively. We define the
determinant
\[
M^{i_1i_2\dots i_k}_{j_1j_2\dots j_k}=\det
\left[\begin{array}{cccc}
u_{i_1,j_1} & u_{i_1,j_2} & \cdots & u_{i_1,j_k} \\
u_{i_2,j_1} & u_{i_2,j_2} & \cdots & u_{i_2,j_k} \\
\vdots & \vdots & \ddots & \vdots \\
u_{i_k,j_1} & u_{i_k,j_2} & \cdots & u_{i_k,j_k}
\end{array}\right].
\]
Then, the formulas for $\rho_\tau$ for $|\tau|=2,3,4$ are given in \cite{em} as
\begin{align*}
\tau&=\{u_i,u_j\}: \\
&\hspace{-0.3cm}\rho_{\tau}^2 =\frac{(M^{ij}_{10})^2 + (M^{ij}_{20})^2 + (M^{ij}_{30})^2}{4},\\
\tau&=\{u_i,u_j,u_k\}: \\
&\hspace{-0.3cm}\rho_{\tau}^2 =\frac{  \left(\sum_{z=1}^3(M^{ij}_{z0})^2\right) \cdot \left(\sum_{z=1}^3(M^{jk}_{z0})^2\right)
\cdot \left(\sum_{z=1}^3(M^{ki}_{z0})^2\right)  }{4\left(  (M^{ijk}_{230})^2 + (M^{ijk}_{130})^2 + (M^{ijk}_{120})^2 \right)},\\
\tau&=\{u_i,u_j,u_k,u_\ell\}:\\
&\hspace{-0.3cm}\rho_{\tau}^2 =\frac{(M^{ijk\ell}_{2340})^2\! +\! (M^{ijk\ell}_{1340})^2 \!+\! (M^{ijk\ell}_{1240})^2 \!+\!
4\cdot M^{ijk\ell}_{1230}\cdot M^{ijk\ell}_{1234}}{4\cdot (M^{ijk\ell}_{1230})^2}.
\end{align*}
It follows from the definition of the alpha general position and the above formula that $g$ is of class $C^\infty$ on $O_\alp$.
\end{proof}

\begin{rem}
\label{rem:jump}
It follows from the proof of Lemma~\ref{lemma:differentiable_g_alpha} that breaking the alpha general
position assumption immediately makes that map $g$ loose its differentiability.
\end{rem}

\subsubsection{The map $h$}
\label{sec:map_h}

Let us next study the map $h$ and its differentiability.  
The map $h$ can be computed by Algorithm~\ref{alg:persistence} below, which consists of three parts and is
based on \cite{bkr} and \cite{zc}. Each procedure is presented in what follows.

\begin{algorithm}[h!]
  \caption{Compute Persistence Data from $W$ and $\partial$}
  \label{alg:persistence}
  \begin{algorithmic}
    \Procedure{ComputePersistenceData}{$W, \partial$}
      \State $B$ $\leftarrow$ \Call{BoundaryMatrix}{$W, \partial$}
      \State $P$ $\leftarrow$ \Call{PersistenceLeftRight}{$B$}
      \State \Return \Call{PersistenceData}{$P$}
    \EndProcedure
  \end{algorithmic}
\end{algorithm}

\paragraph{\textproc{BoundaryMatrix}}
For a matrix $A = (A_{ij})$, let us denote its $j$-th column of $A$ by $A_j$, and for a non-zero column
$A_j$, set $\textrm{pivot}(A_j) := \max \{ i \mid A_{ij} \neq 0 \}$, 
called the pivot index.

Let us order the set $W$ of all simplices, $\sigma_1 < \sigma_2 < \cdots < \sigma_K$,
by the lexicographical order of $(r_{\sigma}, \dim \sigma) \in \realpositive \times \Z$. 
If two (or more) simplices $\sigma,\sigma'$ appear at the same birth radius with the same dimension,
we order them by an appropriate rule. 

In this order, a subset $\{\sigma_1, \cdots, \sigma_k\}$ for any $k$ is a subcomplex of $W$ in both the
Vietoris-Rips and the alpha filtrations.

Let $C$ be the vector space spanned by
$\{\sigma_1, \ldots, \sigma_K\}$.
A matrix representation $B = (B_{ij})$ of the boundary map $\partial : C \to C$ is constructed in such a way that 
for an $\ell$-simplex $\sigma_i=\{u_{i_0},\dots, u_{i_\ell}\}$ with $i_0<\dots<i_\ell$, the $(i,j)$-entry is given by
\[
  B_{ij} = 
  \left\{\begin{array}{ll}
           (-1)^k, & \sigma_j=\{u_{i_0},\dots,\widehat{u_{i_k}},\dots,u_{i_\ell}\} ,\\
           0, & {\rm otherwise}.
         \end{array}\right.
\]

\paragraph{\textproc{PersistenceLeftRight}}

A column operation of the form $A_j \leftarrow A_j +\lambda A_k$
is called 
a left-to-right operation if $k < j$. We call a matrix $A'$ \textit{derived} from $A$ if 
$A'$ can be transformed from $A$ by left-to-right operations.
We call a matrix $A'$ {\em reduced} if no two non-zero columns have the same pivot index.
If a reduced matrix $A'$ is derived from $A$, we call it a \textit{reduction} of
$A$. In this case, we define
\begin{align*}
  P_{A'} &= \{ (i,j) \mid A'_j \neq 0 \mbox{ and } i = \textrm{pivot}(A'_j) \},\\
  E_{A'} &= \{ i \mid 1 \leq i \leq K, i\neq i' \mbox{ and } i\neq j' \mbox{ for }\forall 
(i',j') \in P_{A'}\}
\end{align*}

For a reduction $B'$ of the matrix representation $B$, it follows from \cite{zc} that we can find a basis
$\{w_1, \ldots, w_K\}$ of $C$ satisfying: (i) the subspace spanned by $\{\sigma_1, \ldots, \sigma_k\}$
is equal to the subspace spanned by $\{w_1,\ldots, w_k\}$ for any $k$, and
(ii) $\partial w_j$  is given by
\[
  \partial w_j = \left\{
  \begin{array}{ll}
    w_i, & (i,j) \in P_{B'} \\
    0, & \mbox{otherwise}
  \end{array}
  \right. .
\]

Algorithm~\ref{alg:persistence_bkr} (a modification
of Algorithm~1 in \cite{bkr})
shows a simple algorithm to compute
$P_{B'}$ of the matrix $B$.
Note that $E_{B'}$ is easily computable from $P_{B'}$.
The algorithm processes columns
from left to right; for each column, other columns are added from the left
until a new pivot index appears or the column becomes zero. 

\begin{algorithm}[h!]
  \caption{Reduction Algorithm}
  \label{alg:persistence_bkr}
  \begin{algorithmic}
    \Procedure {PersistenceLeftRight}{$B$}
      \State $A\leftarrow B$; $L\leftarrow[0,\ldots,0]$ 
      \For{$j=1,\ldots,K$}
        \While{$A_j\neq 0$ and $L[\pivot(A_j)]\neq 0$}
          \State $ k \leftarrow \pivot(A_j)$
          \State $ l \leftarrow L[\pivot(A_j)]$
          \State $A_j\leftarrow A_j - a_{kj} a_{kl}^{-1} \cdot A_{l}$
        \EndWhile
        \If{$A_j\neq 0$} 
          \State $i\leftarrow \pivot(A_j)$
          \State $L[i]\leftarrow j$
        \EndIf
      \EndFor
      \State {\bf return} $\{(i,j) \mid L[i]=j \mbox{ and } j \neq 0\}$ 
    \EndProcedure
  \end{algorithmic}
\end{algorithm}

\paragraph{\textproc{PersistenceData}}
The basis $\{w_1, \ldots, w_K\}$ represents the decomposition of the persistent homology, and hence,
we obtain the persistence data from $P_{B'}$ as follows:
%
\[
\begin{cases}
(r_{\sigma_i}, r_{\sigma_j}), & \text{for~} (i,j) \in P_{B'} \text{~and~} \dim \sigma_i = \ell, \\
(r_{\sigma_i}, \infty),            & \text{for~} i \in E_{B'} \text{~and~} \dim \sigma_i = \ell.
\end{cases}
\]
Note that the persistence data is independent of the choice of algorithm from the unique decomposition of 
persistent homology. 

As a result of the above algorithms, the map $h : \R^d\rightarrow \R^m$ is expressed by
\[
h(r) = \left[\begin{array}{c}
h_{{\rm fin}}(r)\\
h_{{\rm inf}}(r)
\end{array}\right]
\]
with
\[
h_{{\rm fin}}(r)=\left[\begin{array}{c}
r_{\sigma_{i_1}}\\
r_{\sigma_{j_1}}\\
\vdots\\
r_{\sigma_{i_s}}\\
r_{\sigma_{j_s}}
\end{array}\right],
\quad
h_{{\rm inf}}(r)=
\left[\begin{array}{c}
r_{\sigma_{i_{s+1}}}\\
\vdots\\
r_{\sigma_{i_{s+t}}}
\end{array}\right],
\]
where 
$\{(i_1, j_1), \ldots, (i_s, j_s)\} = \{(i,j) \in P_R \mid \dim\sigma_i=\ell \mbox{ and } (r_{\sigma_i}, r_{\sigma_j}) \in U_\epsilon\}$, $U_\epsilon=\{p\in \bar{\R}^2\mid \dinfty(p,\Delta)\geq \epsilon\}$, and $\{i_{s+1}, \cdots, i_{s+t}\} = \{i \in E_R\mid \dim\sigma_i = \ell \}$.  
Hence, we have $m=2s+t$.
The condition $(r_{\sigma_i}, r_{\sigma_j}) \in U_\epsilon$
guarantees the uniqueness of $m$ from Lemma \ref{lemma:nondecreasing}.
%
%


The following lemma is derived easily from the explicit form of $h(r)$.
\begin{lem}
\label{lemma:differentiable_h}
The map $h$ is of class $C^\infty$ on $\R^d$.
\end{lem}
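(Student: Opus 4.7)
The plan is to simply read off the conclusion from the explicit expression for $h(r)$ written immediately before the lemma. According to that expression, every component of $h_{\rm fin}(r)$ and $h_{\rm inf}(r)$ has the form $r \mapsto r_{\sigma_k}$ for some index $\sigma_k \in W$, i.e., is a projection onto one of the $d$ coordinates of $r$. Stacking these projections into a vector-valued map exhibits $h$ as a linear map $\R^d \to \R^m$, and every linear map between finite-dimensional Euclidean spaces is real analytic, hence in particular of class $C^\infty$. So the proof is essentially a one-liner: $h$ is linear, therefore $h \in C^\infty(\R^d, \R^m)$.

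The one subtlety worth articulating, if only for the reader's comfort, is that the particular indices $(i_1, j_1), \dots, (i_s, j_s)$ and $i_{s+1}, \dots, i_{s+t}$ picked out in the formula are not determined by the numerical values of $r$ but by combinatorial data: the lexicographic ordering of simplices by $(r_\sigma, \dim \sigma)$ together with the output $P_{B'}$ and $E_{B'}$ of Algorithm~\ref{alg:persistence_bkr}. Once this ordering and the reduction output are fixed, as they are in the paragraph preceding the lemma, the selected indices are constants, and the coordinate projections $r \mapsto r_{\sigma_{i_k}}$, $r \mapsto r_{\sigma_{j_k}}$ are genuinely fixed linear functionals on $\R^d$. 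Consequently the map $h$ written in the explicit block form is literally a linear map, and no differentiability issues can arise from within $\R^d$ itself.

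There is no real obstacle here; the content of the lemma is that smoothness of $h$ is completely trivial after the combinatorial set-up is in place. All the nontrivial work (ensuring that the set $W$, the ordering, and the pairing data are stable) was already done in defining $O_{\vr}$ and $O_{\alp}$ and in Lemmas~\ref{lemma:differentiable_g_rips} and~\ref{lemma:differentiable_g_alpha}, which handle the potentially discontinuous combinatorial choices by restricting to open sets where the general position assumption holds. Once $g$ has been defined on such an open set, $h$ has nothing left to do but select coordinates, and the composition $f = h \circ g$ inherits its smoothness from $g$. I would therefore write the proof as a single short paragraph observing the linearity of $h$ and invoking the standard fact that linear maps are $C^\infty$.
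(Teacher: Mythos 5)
Your proof is correct and matches the paper's reasoning: the paper gives no argument beyond noting that the lemma ``is derived easily from the explicit form of $h(r)$,'' which is precisely your observation that $h$ is a fixed selection of coordinates, hence linear and $C^\infty$. Your extra remark that the index choices are combinatorial constants (not functions of $r$) is a fair articulation of why this is legitimate, and it is consistent with the paper's setup.
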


We remark that it is sufficient to treat the $\ell-1$, $\ell$, and the $(\ell+1)$-simplices in
Algorithm~\ref{alg:persistence_bkr}, if we want the persistence diagram $D_\ell$ for a
single value of $\ell$. 

\subsubsection{The map $f$}
It follows from the chain rule
$
Df(u)=Dh(r)\circ Dg(u)
$
that the explicit form of the derivative $Df(u)$ is given by

\[
Df(u) = \left[\begin{array}{c}
Df_{{\rm fin}}(u)\\
Df_{{\rm inf}}(u), 
\end{array}\right],
\]
with
\[
Df_{{\rm fin}}(u) =\left[\begin{array}{c}
\frac{\partial r_{\sigma_{i_1}}}{\partial u}\\
\frac{\partial r_{\sigma_{j_1}}}{\partial u}\\
\vdots\\
\frac{\partial r_{\sigma_{i_s}}}{\partial u}\\
\frac{\partial r_{\sigma_{j_s}}}{\partial u}
\end{array}\right],
\quad
Df_{{\rm inf}}(u)=\left[\begin{array}{c}
\frac{\partial r_{\sigma_{i_{s+1}}}}{\partial u}\\
\vdots\\
\frac{\partial r_{\sigma_{i_{s+t}}}}{\partial u}\\
\end{array}\right].
\]

%
\begin{prop}
The map $f$ is of class $C^\infty$ on $O_{\vr}$ and $O_\alp$. Moreover, the derivatives are
independent of the choice of algorithms up to permutations of coordinates in $\R^m$.
\end{prop}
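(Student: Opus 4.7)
The plan is to split the proposition into two independent claims and handle them separately. For the smoothness claim, I would invoke the decomposition $f = h \circ g$ established in Section~3.2 and apply the chain rule. On $O_{\vr}$, Lemma~\ref{lemma:differentiable_g_rips} gives $g \in C^\infty$; on $O_{\alp}$, Lemma~\ref{lemma:differentiable_g_alpha} does the same; and Lemma~\ref{lemma:differentiable_h} gives $h \in C^\infty$ on the relevant domain. Composing yields $f \in C^\infty$ on each open set, with $Df(u) = Dh(r)\,Dg(u)$, in agreement with the row-block expressions already displayed for $Df_{\mathrm{fin}}$ and $Df_{\mathrm{inf}}$.

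For the independence claim, I would appeal to the uniqueness part of the structure theorem of persistent homology: although the reduction procedure in Algorithm~\ref{alg:persistence_bkr} depends on a specific choice of $B'$, the resulting pairing set $P_{B'}$ and the set $E_{B'}$ of essential indices depend only on the filtered boundary map, not on the reduction strategy. Hence the simplices $\sigma_{i_k}$ and $\sigma_{j_k}$ that contribute rows to $Df_{\mathrm{fin}}(u)$ and $Df_{\mathrm{inf}}(u)$ are intrinsic invariants of the filtration. The general-position hypotheses of Definitions~\ref{definition:vr_general} and \ref{definition:alpha_general} then ensure that each birth radius is realized by a unique attaching simplex, so no numerical coincidence introduces extra ambiguity in which gradients $\partial r_\sigma/\partial u$ appear.

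The only residual freedom is the order in which the pairs are enumerated in the output vector of $\R^m$, which amounts to a permutation of the rows of $Df(u)$; this is exactly the statement of the proposition. The main obstacle I anticipate is phrasing the pairing-uniqueness result in the notation of Section~3.3 and making explicit that it covers $E_{B'}$ as well as $P_{B'}$. Once those two sets are certified as intrinsic invariants of the filtration, the chain-rule composition is essentially a one-line computation and the permutation statement follows immediately.
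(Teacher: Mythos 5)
Your first claim is handled exactly as in the paper: the chain rule applied to $f=h\circ g$ together with the smoothness lemmas for $g$ (in both the Vietoris--Rips and alpha cases) and for $h$. For the second claim your route differs slightly from the paper's, and the difference is worth flagging. You certify the index sets $P_{B'}$ and $E_{B'}$ as intrinsic invariants of the filtered boundary matrix (the pairing uniqueness lemma), which is true only once the ordering $\sigma_1<\cdots<\sigma_K$ of the simplices is fixed. But the ``choice of algorithm'' the proposition quantifies over also includes the tie-breaking rule used to order simplices appearing at the same birth radius with the same dimension, and under a different tie-breaking rule the paired simplices can genuinely differ: two algorithms may output distinct simplices $\sigma_{i_k}\neq\sigma_{i'_k}$ realizing the same birth or death value. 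The paper therefore argues only from the uniqueness of the persistence data (the structure theorem), which gives equality of the values $r_{\sigma_{i_k}}=r_{\sigma_{i'_k}}$, and then uses general position to produce the unique attaching simplex $\eta_k$ with that birth radius; since the birth radius of any simplex equals, as a function of $u$ near the given configuration, that of its attaching simplex, the rows satisfy $\partial r_{\sigma_{i_k}}/\partial u=\partial r_{\eta_k}/\partial u=\partial r_{\sigma_{i'_k}}/\partial u$ even when the simplices themselves differ. You do invoke the unique-attaching-simplex fact, but you use it to rule out ambiguity in which gradients appear rather than as the mechanism that equates the gradients of distinct simplices sharing a birth radius; once you make that last step explicit your argument closes, and the appeal to pairing uniqueness becomes unnecessary, since uniqueness of the persistence data already suffices.
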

\begin{proof}
The first statement follows from the chain rule and Lemmas~\ref{lemma:differentiable_g_rips},
\ref{lemma:differentiable_g_alpha}, and \ref{lemma:differentiable_h}.
For the second statement, let us assume two different expressions $h$ and $h'$.
From the uniqueness of the persistence data, we can express $h$ and $h'$ by appropriate
permutations if necessary as
\begin{align*}
&h(u) = \left[\begin{array}{c}
h_{{\rm fin}}(u)\\
h_{{\rm inf}}(u)
\end{array}\right], \\
&h_{{\rm fin}}(u)=\left[\begin{array}{c}
r_{\sigma_{i_1}}\\
r_{\sigma_{j_1}}\\
\vdots\\
r_{\sigma_{i_s}}\\
r_{\sigma_{j_s}}
\end{array}\right], ~
h_{{\rm inf}}(u)=
\left[\begin{array}{c}
r_{\sigma_{i_{s+1}}}\\
\vdots\\
r_{\sigma_{i_{s+t}}}
\end{array}\right],
\end{align*}
and
\begin{align*}
&h'(u) = \left[\begin{array}{c}
h'_{{\rm fin}}(u)\\
h'_{{\rm inf}}(u)
\end{array}\right],\\
&h'_{{\rm fin}}(u)=\left[\begin{array}{c}
r_{\sigma_{i'_1}}\\
r_{\sigma_{j'_1}}\\
\vdots\\
r_{\sigma_{i'_s}}\\
r_{\sigma_{j'_s}}
\end{array}\right], ~
h'_{{\rm inf}}(u)=
\left[\begin{array}{c}
r_{\sigma_{i'_{s+1}}}\\
\vdots\\
r_{\sigma_{i'_{s+t}}}
\end{array}\right]
\end{align*}
with
\[
r_{\sigma_{i_k}}=r_{\sigma_{i'_k}},~~r_{\sigma_{j_k}}=r_{\sigma_{j'_k}}
\]
for all $k$. On the other hand, it follows from the definitions of the Vietoris-Rips and the alpha general
positions of $u$ that there uniquely exist the attaching simplices $\eta_k$ and $\xi_k$ such that
\[
r_{\eta_k}=r_{\sigma_{i_k}}=r_{\sigma_{i'_k}},~~r_{\xi_k}=r_{\sigma_{j_k}}=r_{\sigma_{j'_k}}
\]
for each $k$. Hence, this leads to
\[
\frac{\partial r_{\eta_k}}{\partial u} = 	\frac{\partial r_{\sigma_{i_k}}}{\partial u} = 	\frac{\partial r_{{\sigma_{i'_k}}}}{\partial u},~~
\frac{\partial r_{\xi_k}}{\partial u} = 	\frac{\partial r_{\sigma_{j_k}}}{\partial u} = 	\frac{\partial r_{\sigma_{j'_k}}}{\partial u},	
\]
and completes the proof of the second statement.
\end{proof}

\begin{rem}
Since (\ref{eq:stability1}) and (\ref{eq:stability2}) are $1$-Lipschitz, it is reasonable to have differentiability
from Rademacher's theorem \cite{rademacher}. The discussion in this section provides a constructive proof
of this fact, and furthermore, shows that the derivatives are independent of the choice of algorithm.
\end{rem}

\section{Continuation}
\label{sec:cont}

\subsection{Continuation by Newton-Raphson Method}
\label{sec:newton_square}

We first recall the standard Newton-Raphson continuation method \cite{continuation}.
Let $U$ be an open set in $\R^n$ and $\varphi : U\times \R \to \R^n$ be a $C^1$ mapping.
Suppose that $(\bar{u},\bar{\lambda}) \in U\times \R $ satisfies $\varphi(\bar{u}, \bar{\lambda}) = 0$.
Our purpose is to solve $\varphi(u, \lambda) = 0$ with respect to $u \in U$
for a given $\lambda$.
The existence and the local uniqueness of the
solution $u = u_{\lambda}$
is guaranteed by the implicit function theorem
when $D_u \varphi (\bar{u}, \bar{\lambda})$ is regular
and $\lambda$ is sufficiently close to $\bar{\lambda}$. 

In practical computations, we
find the solution $u_{\lambda}$ for each $\lambda$
by iteratively solving linear equations as follows.
By taking an appropriate initial point $u^{(0)}$, 
the linear approximation of $\varphi(u,\lambda)$ at each iteration step $j$ is given by
\[
\varphi(u, \lambda)\approx\varphi(u^{(j)}, \lambda)+D_u\varphi(u^{(j)},\lambda)(u-u^{(j)}).
\]
Setting the right hand side to be zero
\[
\varphi(u^{(j)}, \lambda)+D_u\varphi(u^{(j)}, \lambda)(u-u^{(j)})=0,
\]
we obtain an update of the approximate solution
\begin{equation}
\label{eq:newton_square}
u^{(j+1)}=u^{(j)}-D_u\varphi(u^{(j)}, \lambda)^{-1}\varphi(u^{(j)}, \lambda).
\end{equation}

This iteration method is called the Newton-Raphson Method, and the convergence 
of the iterations under suitable regularity of derivatives is well studied (e.g., \cite{or}).

The continuation of the solution $(\bar{u},\bar{\lambda})$ to a parameter $\lambda'$
is achieved by gradually changing the parameter $\lambda$.
That is, for $\lambda_0=\bar{\lambda}< \lambda_1< \cdots< \lambda_N=\lambda'$,
the Newton-Raphson method is applied for each parameter $\lambda_i$ 
by setting $u^{(0)} = u_{\lambda_{i-1}}$ with $u_{\lambda_0}  = \bar{u}$ 

\subsection{Newton-Raphson Method by Pseudo-Inverse}
\label{sec:newton_pseudo_inverse}

Let $f: U\rightarrow \R^m$ be a persistence map defined on an open set $U\subset\R^n$. 
Let us define a map
\begin{equation}\label{eq:F}
	F:U\times\R^m\rightarrow \R^m
\end{equation}
by $F(u,v)=f(u)-v$. For a given pair $(\bar{u},\bar{v})$ satisfying $F(\bar{u},\bar{v})=0$ and $v$ close to $\bar{v}$,
our interest is to solve $F(u,v)=0$ with respect to $u\in U$. 
The existence of the solution is guaranteed when $Df(\bar{u})$ is 
surjective and $v$ is sufficiently close to $\bar{v}$. 
Hence, for the rest of the paper, we add the assumption that $m \leq n$. 
In this case, the solutions form an $n-m$ dimensional manifold.

The basic strategy to solve $F(u,v)=0$ is the same as in Section~\ref{sec:newton_square}, and
we derive an iteration method for $u^{(j)}$, $j=0,1,\dots, N$, with $u^{(0)}=\bar{u}$ converging to
the solution. Namely, the linear approximation of $F$ at $u^{(j)}$ leads to
\begin{equation}\label{eq:linearization}
	Df(u^{(j)})(u-u^{(j)})+F(u^{(j)},v)=0,
\end{equation}
where $Df(u^{(j)})=D_uF(u^{(j)},v)$, and we derive an update $u^{(j+1)}$ by solving the linear equation
with respect to $u$. However, we note that the linear equation (\ref{eq:linearization}) is defined having
domain and image with different dimensions in general. Thus $Df(u^{(j)})$ is an $m\times n$ rectangular matrix with $m \leq n$. 

In general, let us consider a linear equation
\begin{equation}\label{eq:lineareq}
	Ax=b,~~~A\in M_{m,n}(\R),~b\in \R^m
\end{equation}
with $m\leq n$. For solving this type of linear equations, we first recall the concept of pseudo-inverse and
explain its relation to solutions of the linear system (\ref{eq:lineareq}). 

For a matrix $A\in M_{m,n}(\R)$, there exists a unique matrix $X\in M_{n,m}(\R)$ satisfying the so-called Penrose equations:
%
\begin{align*}
  &AXA=A,~~~XAX=X, \\
  &(AX)^T=AX,~~~(XA)^T=XA,
\end{align*}
where $A^T$ is the transpose matrix of $A$.
The unique matrix solution $X$ is called the {\em pseudo-inverse} of $A$ and denoted by $A^\dagger$. 
%

An explicit formula to construct $A^\dagger$ is given for instance in \cite{bg,hj}. Assume that the matrix
$A \in M_{m,n}(\R)$ has the rank $k \le m$. Then, it has a singular value decomposition (SVD) of the form
$A=V \Sigma W^T$, where $V$ and $W$ are orthogonal matrices, and the matrix
$\Sigma = (\sigma_{i,j}) \in M_{m,n}(\R)$ has $\sigma_{i,j}=0$ for all $i \ne j$, and
$\sigma_{1,1} \ge \sigma_{2,2} \ge \cdots \ge \sigma_{k,k} > \sigma_{k+1,k+1}=\cdots =\sigma_{m,m}=0$.
The numbers $\sigma_i := \sigma_{i,i}$, $i=1,\dots,m$, are called the {\em singular values} of the matrix $A$. 
From the SVD of the matrix $A$, the pseudo-inverse $A^\dagger$ can be obtained by the formula
%
\[
A^\dagger = W \Sigma^\dagger V^T, 
\]
%
where $\Sigma^\dagger=(\sigma^\dagger_{i,j})$ is the $n\times m$ matrix with $\sigma^\dagger_{i,j}=0$ for all $i\neq j$, $\sigma^\dagger_{i,i}=\sigma_i^{-1}$ for $i=1,\dots,k$, and $\sigma^\dagger_{i,i}=0$ for $k<i\leq m$. 
%
%

%
%

Equation (\ref{eq:lineareq}) has a solution for $x$ if and only if $b\in \image (A)$. In such a case,
there is a unique minimum norm solution $x$ of (\ref{eq:lineareq}), meaning that the Euclidean norm
$||x||$ is minimum among all the solutions of (\ref{eq:lineareq}). If $b\notin \image (A)$, then a
least-squares solution $x$ of (\ref{eq:lineareq}) is a vector minimizing the error $||Ax-b||$. The following
proposition provides us with relations between the pseudo-inverse and solutions of the linear equation.
For a proof, the reader may refer to \cite{bg} for instance.
\begin{prop}
Let $A\in M_{m,n}(\R)$ and $b\in \R^m$. If $b\in\image (A)$, then the unique minimum norm solution
of $Ax=b$ is given by $x=A^\dagger b$. If $b\notin\image (A)$, among the least-squares solutions of $Ax=b$,
$A^\dagger b$ is the one of minimum norm.
\end{prop}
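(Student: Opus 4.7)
The plan is to reduce both statements to a diagonal problem via the singular value decomposition $A = V \Sigma W^T$ given in the text. Because $V$ and $W$ are orthogonal, introducing $\tilde x = W^T x$ and $\tilde b = V^T b$ preserves Euclidean norms: $\|x\| = \|\tilde x\|$ and $\|Ax - b\| = \|\Sigma \tilde x - \tilde b\|$. Since $\image(A) = V \cdot \image(\Sigma)$ and $\image(\Sigma)$ consists of vectors whose last $m-k$ coordinates vanish, the condition $b \in \image(A)$ is equivalent to $\tilde b_{k+1} = \cdots = \tilde b_m = 0$.

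For the first statement I would then solve $\Sigma \tilde x = \tilde b$ directly. The system decouples into $\sigma_i \tilde x_i = \tilde b_i$ for $i \le k$, while the coordinates $\tilde x_{k+1}, \dots, \tilde x_n$ are free (they lie in $\kernel \Sigma$, which corresponds to $\kernel A$ after undoing the rotation by $W$). The general solution therefore has $\tilde x_i = \tilde b_i/\sigma_i$ for $i \le k$ and $\tilde x_i$ arbitrary otherwise, with squared norm $\sum_{i \le k}(\tilde b_i/\sigma_i)^2 + \sum_{i > k}\tilde x_i^2$. This is strictly convex in the free variables, so the unique minimizer sets them to zero, yielding $\tilde x = \Sigma^\dagger \tilde b$ and hence $x = W \Sigma^\dagger V^T b = A^\dagger b$.

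For the second statement I would start from
\[
\|Ax - b\|^2 \;=\; \|\Sigma \tilde x - \tilde b\|^2 \;=\; \sum_{i \le k}(\sigma_i \tilde x_i - \tilde b_i)^2 + \sum_{i > k} \tilde b_i^2.
\]
The second sum is a constant (nonzero precisely because $b \notin \image(A)$), and the first is minimized exactly when $\tilde x_i = \tilde b_i/\sigma_i$ for $i \le k$, with $\tilde x_{k+1}, \dots, \tilde x_n$ once again free. Minimizing $\|x\| = \|\tilde x\|$ over this affine set of least-squares solutions then forces the free coordinates to zero, giving $x = A^\dagger b$ as the unique minimum-norm least-squares solution.

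There is no serious obstacle here: the SVD does the work and the argument is standard. The only point requiring a sentence of care is the passage from ``minimum over $\tilde x$'' to ``minimum over $x$,'' which is immediate from the orthogonality of $W$; a reader might otherwise worry that the minimum-norm property depends on the choice of basis. One could alternatively verify directly that $A^\dagger b$ satisfies Penrose's equations and use them to confirm both claims, but the SVD approach makes the geometry transparent.
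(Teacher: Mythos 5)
Your proof is correct. Note that the paper does not prove this proposition at all -- it defers to Ben-Israel and Greville \cite{bg} -- so there is no in-paper argument to compare against; your SVD reduction is the standard one and fits naturally with the text, since the paper defines $A^\dagger = W\Sigma^\dagger V^T$ via the SVD immediately beforehand. The argument is complete: the change of variables $\tilde x = W^T x$, $\tilde b = V^T b$ is norm-preserving, the characterization of $\image(A)$ via vanishing of $\tilde b_{k+1},\dots,\tilde b_m$ is right, and in both cases the (least-squares) solution set is the affine set $\{\tilde x_i = \tilde b_i/\sigma_i,\ i\le k\}$ with the remaining coordinates free, over which the norm is uniquely minimized at zero. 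One could equally well verify the Penrose equations directly, as you note, but that route obscures why the minimum-norm property holds; your version is the more transparent one.
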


This proposition provides us with a method for finding the minimum norm solution of the
equation (\ref{eq:linearization}). Namely, we update the approximate solution $u^{(j)}$ by
\begin{equation}
\label{eq:update}
u^{(j+1)}=u^{(j)}-A_j^\dagger F(u^{(j)},v),
\end{equation}
where $A_j=Df(u^{(j)})$. Note that, from the minimality condition on the norm, the update $u^{(j+1)}$ is chosen to be closest to
$u^{(j)}$, and this is a natural choice for the purpose of continuation.

The convergence of the iterations (\ref{eq:update}) is studied in \cite{b2} (see also \cite{b1}), and we summarize
it as follows.
\begin{prop}
Let $r>0$ be a positive real number such that $f\in C^1(B_r(\bar{u}))$. Let $\alpha,\beta$ be positive constants
such that, for all $u,w\in B_r(\bar{u})$ with $u-w\in \image Df(w)^T$, the followings hold:
\begin{align*}
&||Df(w)(u-w)-f(u)+f(w)||\leq \alpha||u-w||,\\
&||(Df(w)^\dagger-Df(u)^\dagger)f(u)||\leq \beta||u-w||,\\
&\alpha||Df(z)^\dagger||+\beta=\gamma<1~~~{\rm for~all~}z\in B_r(\bar{u}),\\
&||Df(\bar{u})^\dagger||\cdot||f(\bar{u})||<(1-\gamma)r.
\end{align*}
Then the iteration {\rm (\ref{eq:update})} converges to a solution of
\[
Df(u)^TF(u,v)=0
\]
which lies in $B_r(\bar{u})$.
\end{prop}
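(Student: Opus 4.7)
My plan is to adapt the classical convergence proof for the Gauss--Newton iteration with pseudo-inverse Jacobian (cf.~\cite{b1,b2}), with $A_j := Df(u^{(j)})$ and $e_j := u^{(j+1)} - u^{(j)} = -A_j^\dagger F(u^{(j)}, v)$. The three goals are: (a) a per-step contraction $||e_{j+1}|| \leq \gamma ||e_j||$, (b) ball-containment $u^{(j)} \in B_r(\bar u)$ for all $j$, and (c) identification of the limit as a zero of $Df(u)^T F(u,v)$. Hypotheses (i) and (ii) are interpreted with $f$ standing for the shifted map $F(\cdot,v)$ that we are trying to zero.

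The pivotal algebraic observation is that $\image A_j^\dagger = \image A_j^T$, which follows from the SVD formula $A_j = U \Sigma V^T$, $A_j^\dagger = V \Sigma^\dagger U^T$. Hence $e_j \in \image A_j^T$, which is exactly the admissibility condition under which hypotheses (i)--(ii) apply to the pair $(u,w) = (u^{(j+1)}, u^{(j)})$. From (i) I get the residual decomposition
\[
F(u^{(j+1)}, v) = F(u^{(j)}, v) + A_j e_j + \delta_j, \qquad ||\delta_j|| \leq \alpha ||e_j||.
\]
Splitting $e_{j+1} = -A_j^\dagger F(u^{(j+1)}, v) + (A_j^\dagger - A_{j+1}^\dagger) F(u^{(j+1)}, v)$ and substituting the decomposition together with $e_j = -A_j^\dagger F(u^{(j)}, v)$ yields
\[
e_{j+1} = \bigl(e_j - A_j^\dagger A_j e_j\bigr) - A_j^\dagger \delta_j + (A_j^\dagger - A_{j+1}^\dagger) F(u^{(j+1)}, v),
\]
and since $A_j^\dagger A_j$ is the orthogonal projection onto $\image A_j^T \ni e_j$, the first parenthesis vanishes. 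Invoking hypotheses (ii) and (iii) finally gives $||e_{j+1}|| \leq (\alpha ||A_j^\dagger|| + \beta) ||e_j|| \leq \gamma ||e_j||$.

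Next I proceed by induction on $j$, proving simultaneously that $u^{(j)} \in B_r(\bar u)$ and $||e_j|| \leq \gamma^j ||e_0||$. Hypothesis (iv) bounds $||e_0|| \leq ||A_0^\dagger|| \cdot ||F(\bar u,v)|| < (1-\gamma) r$, and the inductive step uses the geometric series $||u^{(j+1)} - \bar u|| \leq \sum_{k=0}^{j} ||e_k|| \leq ||e_0||/(1-\gamma) < r$. Therefore $\{u^{(j)}\}$ is Cauchy with limit $u^{*} \in \overline{B_r(\bar u)}$, while $e_j \to 0$ forces $A_j^\dagger F(u^{(j)}, v) \to 0$. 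Multiplying by $A_j^T A_j$ and using the Penrose identity $A_j^T A_j A_j^\dagger = A_j^T$ converts this into $A_j^T F(u^{(j)}, v) \to 0$; by continuity of $Df$ and $F$ (Lemmas~\ref{lemma:differentiable_g_rips}, \ref{lemma:differentiable_g_alpha}, \ref{lemma:differentiable_h}) the limit satisfies $Df(u^{*})^T F(u^{*}, v) = 0$.

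The main obstacle is the delicate algebraic collapse in the contraction step: the cancellation $A_j^\dagger A_j e_j = e_j$ must be \emph{exact} rather than merely approximate, and this is secured only because the update direction chosen by the pseudo-inverse lands in $\image A_j^T$. Without this, the contraction constant would fail to reduce to $\gamma$. A secondary subtlety is that hypotheses (i) and (ii) are only assumed for $u, w \in B_r(\bar u)$ with $u-w \in \image Df(w)^T$, so $u^{(j+1)} \in B_r(\bar u)$ must be verified \emph{before} the hypotheses are invoked at step $j$; hypothesis (iv) is calibrated precisely to close this induction.
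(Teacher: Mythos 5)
Your proof is correct. Note that the paper itself offers no proof of this proposition --- it explicitly defers to Ben-Israel \cite{b2} (``we summarize it as follows'') --- so there is nothing internal to compare against; what you have written is a faithful and complete reconstruction of the classical Gauss--Newton convergence argument from that source. The two load-bearing points are exactly the ones you isolate: (1) $\image A^\dagger = \image A^T$ together with the fact that $A^\dagger A$ is the orthogonal projection onto $\image A^T$, which makes the term $e_j - A_j^\dagger A_j e_j$ vanish exactly and also explains why the hypotheses are restricted to pairs with $u-w\in\image Df(w)^T$; and (2) the identity $A^T A A^\dagger = A^T$ (from $(AA^\dagger)^T=AA^\dagger$ and $AA^\dagger A = A$), which converts $A_j^\dagger F(u^{(j)},v)\to 0$ into $A_j^T F(u^{(j)},v)\to 0$ and hence identifies the limit as a critical point rather than necessarily a zero of $F(\cdot,v)$ --- consistent with the paper's subsequent remark that one only gets $F(u,v)=0$ when $\rank Df(u)=m$. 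Your reading of the hypotheses with $f$ replaced by the shifted map $F(\cdot,v)$ is the right one (otherwise hypothesis (iv) would not control $\|e_0\|$), and your handling of the induction order --- establishing $u^{(j+1)}\in B_r(\bar u)$ from the geometric-series bound before invoking (i)--(ii) at step $j$ --- closes the one genuine circularity risk. A cosmetic point: the strict inequality in (iv) actually places the limit in the open ball $B_{\,\|e_0\|/(1-\gamma)}(\bar u)\subset B_r(\bar u)$, so you can claim slightly more than $u^*\in\overline{B_r(\bar u)}$.
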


When $m=n$, this proposition provides a criterion for the convergence of the Newton-Raphson
method (\ref{eq:newton_square}).

For $\rank (Df(u))=m$, the iteration converges to a solution of $F(u,v)=0$. On the other hand, when
$\rank (Df(u))<m$, the convergent point $u$ does not necessarily satisfy $F(u,v)=0$, but only implies
$F(u,v)\in \kernel Df(u)$. 
Thus, in our continuation method, we suppose that all the singular values $\sigma_1,\dots,\sigma_m$ are positive. 

\subsection{Continuation of Point Clouds}
\label{sec:continuation}

We use the iteration (\ref{eq:update}) for continuations of a point cloud. Let $f:U\rightarrow \R^m$
be a persistence map.  Suppose that $(u_s,v_s)\in U\times\R^m$ is a pair satisfying $f(u_s)=v_s$.
This pair can be regarded as the initial point of the continuation. Our task is to continuate it to a
target persistence data $v_t\in\R^m$ and obtain $u_t\in \R^n$ satisfying $f(u_t)=v_t$. 

As the simplest way of the continuation,
we divide the line between $v_s$ and $v_t$ equally into $N$ small
segments, and for each $v=v_s+k\Delta v$, $k=1,\dots,N$, where $\Delta v=\frac{v_t-v_s}{N}$,
we apply the iteration method (\ref{eq:update}) and obtain the point cloud $u$ satisfying $f(u)=v$.
This process is summarized in Algorithm~\ref{alg:persistence_continuation}.

\begin{algorithm}[h!]
  \caption{Continuation of a point cloud}
  \label{alg:persistence_continuation}
  \begin{algorithmic}
    \State{\bf input} $u_s,v_s,v_t,N$
    \State $\Delta v=(v_t-v_s)/N$,  $u= u_s$
    \For {$k=1:N$} 
      \State $v=v_s+k\Delta v$
      \State \textbf{if} {\textproc{PseudoInverseNewton}($u,v$) converges}
        \State \ \ \ \ $u'=$\textproc{PseudoInverseNewton}($u,v$)
        \State \ \ \ \ $u=u'$
      \State \textbf{else}
        \State \ \ \ \ {\bf break}
      \State \textbf{endif}
    \EndFor
    \State{\bf output}	 $u_t=u$
  \end{algorithmic}
\end{algorithm}

We can, of course, adaptively choose the length of $\Delta v$ at each continuation step. Furthermore,
we can also adopt an appropriate curve connecting $v_s$ to $v_t$ if necessary. 

We also note that the image of $f$ is not generally equal to the target space $\R^m$.
Hence, if we choose $v_t$ in $\R^m \setminus \image (f)$, the continuation fails at some
$v=v_s+k\Delta v$. See Section \ref{ex:existence_regions} for such an example.

%
It is often the case in practical problems that the point clouds need to satisfy some constraints $g_i(u)=0$, $i=1,\dots,r$ (e.g., conservation laws in mechanics). In such a case, we need to solve the continuation under these constraints, and  its modification is straightforward. Namely, we first extend the original setting (\ref{eq:F}) to
\[
	\tilde{F}:U\times \R^m\rightarrow\R^{m+r}
\]
by $\tilde{F}(u,v)=(f(u)-v,g_1(u),\dots,g_r(u))$. 
Then, by replacing $A_j$ and $F$ in (\ref{eq:update}) with $\tilde{A}_j = D_u\tilde{F}$ and $\tilde{F}$,
respectively, we obtain the appropriate formulas for continuation of point clouds under constraints $g_i(u)=0$, $i=1,\dots,r$. 



%
%
\subsection{Symmetry}
\label{sec:symmetry}

It should be noted that we need to remove symmetries induced by translations and rotations in order
to isolate a solution $u$ of $f(u)=v$. For a point cloud $P$ in $\R^3$, the following restrictions will
remove these symmetries:
\begin{enumerate}
\item[(i)] Fix one of the vertices in $P$ at the origin of $\R^3$, say $u_0\equiv(0,0,0)$.
\item[(ii)] Select one of the vertices in $P\setminus\{u_0\}$, say $u_1$. This vertex is supposed to move
only on the line defined by $\overrightarrow{u_0u_1}$ during the continuation.
\item[(iii)] Select one of the vertices in $P\setminus\{u_0,u_1\}$, say $u_2$. This vertex is supposed to
move only on the plane defined by $\overrightarrow{u_0u_1}, \overrightarrow{u_0u_2}$ during the continuation.
\end{enumerate}
The first restriction eliminates translation symmetries and the second and third restriction eliminate
rotation symmetries.

In practice we choose our basis of the coordinate system $(x,y,z)\in\R^3$ in such a way that,
in addition to $u_0$ being fixed at the origin, we have that $u_1$
stays on the $x$-axis and $u_2$ stays on the $(x,y)$-plane during the continuation.
Hence, in the following, let us redefine $n=3M-6$ and
\begin{equation}
\label{eq:symmetry_restriction}
u=(u_1, u_2, u_3, \dots, u_M),
\end{equation}
where $u_1=x_1, u_2=(x_2,y_2), u_i=(x_i,y_i,z_i)$ for $3\leq i\leq M$.

For a general point cloud in $\R^L$, these restriction to eliminate the symmetries
should be appropriately modified.  





\subsection{Non-convergence and Zero Singular Values}\label{sec:zsv}
The Newton-Raphson method by pseudo-inverse does not work well when the matrix
$A_j$ in (\ref{eq:update}) has a singular value close to zero. In this section, we discuss those cases in the alpha and Vietoris-Rips filtrations. 

\subsubsection{For the case of alpha filtrations}
For the case of alpha filtrations, we show that
a singular value close to zero appears when there is a birth-death
pair $(b, d)$ with $b \approx d$ in the persistence diagram
$\pdiag_{\ell}(\alp(P))$.
Here, we impose the natural assumptions
that a point cloud $P$ in $\R^L$ ($L=2,3)$
satisfies the general position assumption and
consists of at least $L+1$ points.

Let $\alp(P, \alpha+)$ be the alpha complex
constructed by simplices whose birth radii are smaller than or equal to $\alpha$.
Notice that this is different from $\alp(P, \alpha)$, where
$\alpha$ is equal to the birth radius of a simplex.

\begin{prop}\label{prop:birth_death_pair}
 Let $P$ be a point cloud and let
 $(b,d)$ be a birth-death pair in $\pdiag_\ell(\alp(P))\ (\ell=1,2)$.
 If the birth radius of any simplex
 is not contained in the open interval $(b, d)$,
 there exist an attaching $\ell$-simplex $\sigma$ and
 an attaching $(\ell+1)$-simplex $\tau$ 
 such that $r_{\sigma} = b$, $r_{\tau} = d$, and $\tau \supset \sigma$.
\end{prop}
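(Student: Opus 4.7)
The plan is to first identify the candidates for $\sigma$ and $\tau$ at the two event times, then pin down their dimensions, and finally extract the containment from the reduction algorithm of Section~\ref{sec:map_h}.

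By condition (ii) of Definition~\ref{definition:alpha_general}, each positive birth radius hosts at most one attaching simplex. Let $\tau_b$ and $\tau_d$ denote the attaching simplices at $b$ and $d$; these exist because $(b,d)$ is a genuine persistence pair, so each of $b$ and $d$ is a topological event time. Any other simplex of birth radius $b$ (resp.\ $d$) is non-attaching, and its minimal attaching coface is forced by general position to be $\tau_b$ (resp.\ $\tau_d$), so it must be a proper face of that attaching simplex. Next, I would argue that $\dim \tau_b = \ell$ and $\dim \tau_d = \ell+1$ using the structural fact that, for an alpha filtration in general position, the relative homology $H_*(\alp(P,b+),\alp(P,b))$ reflects the attachment of a single cell of dimension $\dim \tau_b$---classically visible through the Morse theory of the distance function to $P$, in which attaching simplices are the critical points whose Morse index matches their dimension. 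Since the pair $(b,d)$ records $\rank H_\ell$ going up by one at $b$ and down by one at $d$, the attaching cell at $b$ must be a positive $\ell$-cell and the one at $d$ a negative $(\ell+1)$-cell, forcing the asserted dimensions.

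With $\dim\tau_b=\ell$ and $\dim\tau_d=\ell+1$ secured, $\tau_b$ is the only $\ell$-simplex added at time $b$ (every other simplex added there is a proper face of $\tau_b$, hence of dimension $<\ell$), and similarly $\tau_d$ is the only $(\ell+1)$-simplex added at time $d$. Consequently the reduction pairs $\tau_d$ with $\tau_b$: after reducing the column of $\tau_d$ by a left-to-right sequence of operations $\partial\tau_d \mapsto \partial\tau_d + \sum_k \lambda_k \partial\sigma'_k$, with each $\sigma'_k$ an $(\ell+1)$-simplex processed before $\tau_d$, the resulting column has pivot $\tau_b$ and hence $\tau_b$ appears with nonzero coefficient. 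I would then show that this nonzero coefficient can only come from $\partial \tau_d$ itself, because no $(\ell+1)$-simplex processed before $\tau_d$ admits $\tau_b$ as a face: (i) an $(\ell+1)$-simplex $\sigma'\in\alp(P,b+)$ with $\sigma'\supsetneq\tau_b$ would force $r_{\tau_b}\le r_{\sigma'}\le b$ and hence $r_{\sigma'}=b$; by general position the sole attaching simplex at $b$ is $\tau_b$ (of dimension $\ell$), so $\sigma'$ would have to be non-attaching with its minimal attaching coface equal to $\tau_b$ itself---impossible when $\sigma'\supsetneq\tau_b$; (ii) every simplex added at time $d$ before $\tau_d$ is a proper face of $\tau_d$ and hence of dimension $<\ell+1$. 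Therefore $\tau_b$ appears in $\partial\tau_d$ itself, which means $\tau_b$ is a face of $\tau_d$, i.e., $\tau_d\supset\tau_b$.

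The main obstacle is the dimension-matching step. Proving that the net topological change at an event time of an alpha filtration is concentrated in the dimension of the attaching simplex is a subtle structural fact, especially when several non-attaching faces come in simultaneously with the attaching simplex. The cleanest route is probably through the Morse theory of the distance function to $P$, where attaching simplices correspond to critical points of Morse index equal to their dimension and the associated handle attachment produces exactly the one-dimensional change in $H_*$; a purely combinatorial proof via collapsing the non-attaching faces against their attaching parent should also be feasible. Once this dimension match is in place, the containment follows cleanly as in the preceding paragraph.
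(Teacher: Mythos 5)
Your overall architecture is sound, and your final step --- extracting $\tau \supset \sigma$ from the column reduction by showing that no $(\ell+1)$-simplex processed before $\tau_d$ can contain $\tau_b$ as a face --- is correct, and is in fact a more explicit route than the paper's, which instead argues homologically that some $\ell$-face of $\tau$ must already appear at time $b$, since otherwise the class $[\partial\tau]$ would be born earlier. The genuine gap is precisely the step you yourself flag as the main obstacle: the claim that the topological change at an event time of the alpha filtration is concentrated in a single cell of the dimension of the attaching simplex. You assert this via ``Morse theory of the distance function'' but do not prove it, and without it you can conclude neither $\dim\tau_b=\ell$ and $\dim\tau_d=\ell+1$ nor that $\tau_b$ is the \emph{unique} $\ell$-simplex born at $b$ (which your identification of the pivot also relies on). The subtlety is exactly the one you name: at the radius of an attaching simplex, the attaching simplex and all the faces it attaches enter simultaneously, so a priori the relative homology could change in several dimensions at once.

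The paper closes this gap with Lemma~\ref{lemma:attached_simplex}: if an attaching simplex $\tau$ attaches a proper face $\sigma$, then $|\alp(P,r_\tau)|\hookrightarrow|\alp(P,r_\tau+)|$ is a deformation retract, so $r_\tau$ is neither a birth nor a death time. Its proof is the ``combinatorial collapsing'' you mention, but it needs real geometric input: using the listed Voronoi facts, one shows that $\tau$ must be obtuse with $\sigma$ as its longest face, so every \emph{other} proper face of $\tau$ has strictly smaller birth radius; by general position the only simplices entering at $r_\tau$ are then $\{\sigma,\tau\}$, a free pair, whence the collapse. The contrapositive (Corollary~\ref{cor:birth_death}) says that at any event time exactly one simplex is added, namely an attaching simplex that attaches nothing, and only then does the elementary fact that adding a single $k$-simplex either creates a $k$-class or kills a $(k{-}1)$-class deliver your dimension match. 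So your proposal is recoverable, but the lemma you defer is not a routine citation; it carries essentially all of the weight of the proposition, and your remaining steps, while valid, are the easier part.
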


To prove the proposition, we need the following lemma.

\begin{lem}\label{lemma:attached_simplex}
  Let $\sigma$ be a simplex and 
  $\tau$ be its attaching simplex with $\sigma\neq\tau$.
  Then,
  the inclusion from $|\alp(P, r_{\tau})|$ to $|\alp(P, r_{\tau}+)|$
  is a deformation retract. In particular, $r_{\tau}$ 
  is neither a birth time nor a death time.
\end{lem}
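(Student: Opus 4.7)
The plan is to show that the inclusion is a deformation retract by recognizing the added structure as a CW attachment of the closed simplex $\tau$ along a contractible subcomplex of its boundary, and then deducing the ``in particular'' statement as a homological consequence. First, I would use condition (ii) of the alpha general position (Definition~\ref{definition:alpha_general}) to identify the simplices added at time $r_\tau$: since $\tau$ is the unique attaching simplex with birth radius $r_\tau$, every simplex $\eta$ with $r_\eta=r_\tau$ is either $\tau$ itself (when $\eta$ is attaching, since $\rho_\eta=\rho_\tau$ forces $\eta=\tau$) or a non-attaching proper face of $\tau$ whose attaching simplex is $\tau$. Hence $|\alp(P,r_\tau+)|=|\alp(P,r_\tau)|\cup|2^\tau|$, where $2^\tau$ denotes the closed simplex of $\tau$, glued along the subcomplex $L_\tau:=\partial\tau\cap\alp(P,r_\tau)=\{\eta\subsetneq\tau : r_\eta<r_\tau\}$.

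A key structural property is upward-closure of the attached faces: if $\eta\subsetneq\eta'\subsetneq\tau$ and $\eta$ is attached by $\tau$, then $r_\eta\le r_{\eta'}\le r_\tau$ together with $r_\eta=r_\tau$ forces $r_{\eta'}=r_\tau$, and general position then forces $\eta'$ to be attached by $\tau$ as well. By the hypothesis $\sigma\ne\tau$, this up-set is nonempty, and in particular it must contain at least one codimension-$1$ face of $\tau$.

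The central step is to show $|L_\tau|$ is contractible. I would argue this via the Voronoi dual picture: the faces attached by $\tau$ correspond to those faces of $\tau$ whose dual Voronoi cells are first touched by the balls $B_{r_\tau}(u_i)$ at the critical time $r_\tau$, and this collection traces out an open star-shaped region in $|\partial\tau|\cong S^{\dim\tau-1}$ around the radial projection of the circumcenter $O_\tau$; its complement $|L_\tau|$ is then star-shaped in the opposite direction, hence contractible. Alternatively, one may give a simplicial collapse argument: pick a codim-$1$ attached face $\sigma^*$, use it as a free face of $\tau$ to perform an elementary collapse of $(\sigma^*,\tau)$, and continue inductively, using upward-closure to guarantee at each stage a free face pair among the remaining added simplices. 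I expect this contractibility step to be the main obstacle: the attached family need not be the open star of a single minimal simplex (it can be an up-set generated by several minimal elements), so one must carefully control the combinatorial structure of the complement to show that it is collapsible in full generality.

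Once $|L_\tau|$ is known to be contractible, the CW pair $(|2^\tau|,|L_\tau|)$ consists of contractible complexes with the inclusion a cofibration, so $|2^\tau|$ deformation retracts to $|L_\tau|$; extending this retraction by the identity on $|\alp(P,r_\tau)|$ yields a deformation retract $|\alp(P,r_\tau+)|\to|\alp(P,r_\tau)|$, establishing the first assertion. The ``in particular'' clause is then immediate: a deformation retract induces isomorphisms on all persistent homology groups, so no class can be born or die at time $r_\tau$.
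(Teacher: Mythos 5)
Your reduction of the lemma to the statement ``$|\alp(P,r_\tau+)|$ is obtained from $|\alp(P,r_\tau)|$ by gluing the closed simplex $|2^\tau|$ along $|L_\tau|=\{\eta\subsetneq\tau: r_\eta<r_\tau\}$, and $|L_\tau|$ is contractible'' is correct, and your preliminary observations (the added simplices are exactly $\tau$ and the faces attached by $\tau$, by condition (ii) of Definition~\ref{definition:alpha_general}; the attached faces form an up-set in the proper part of the face poset of $\tau$; a cofibration between contractible spaces extends to the desired deformation retraction) are all sound. But the proof is not complete: the contractibility of $|L_\tau|$ is exactly where all the geometric content of the lemma lives, and you explicitly leave it open, offering only two sketches (a star-shapedness heuristic in $|\partial\tau|$ and a hoped-for sequence of elementary collapses) while conceding that the up-set could a priori be generated by several minimal faces and that you do not control the structure of the complement. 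As written, the central claim is asserted rather than proved.

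The paper closes precisely this gap by a concrete Voronoi/circumcenter computation rather than by general collapsibility machinery. Working in coordinates adapted to the endpoint $E=V(u_1,u_2,u_0)$ of the Voronoi face dual to $\sigma$, it shows that $\tau$ is an ``obtuse'' simplex whose unique attached face is the codimension-one face $\sigma$ opposite the circumcenter, and that every other proper face of $\tau$ has birth radius strictly less than $r_\tau$ (using the characterization of $r_\omega$ as the infimal radius of an empty ball through the vertices of $\omega$). Hence $\alp(P,r_\tau+)\setminus\alp(P,r_\tau)=\{\sigma,\tau\}$ with $\sigma$ a free face of $\tau$, and the deformation retract is a single elementary collapse; your worry about up-sets with several minimal elements simply does not arise under the general position assumption. (This uniqueness also disposes of the seemingly missing case of a tetrahedron attaching an edge: if an edge were attached by a tetrahedron, the intermediate triangle would be attached too, contradicting the codimension-one analysis.) If you want to keep your more abstract route, you would need to prove that the simplices added at $r_\tau$ form an interval $[\sigma,\tau]$ in the face poset, from which $|L_\tau|$ is the join of the nonempty simplex $2^{\tau\setminus\sigma}$ with $\partial(2^{\sigma})$ and hence contractible; but some version of the paper's geometric argument is unavoidable to establish that interval structure.
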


To prove the lemma, we recall some properties
about Voronoi decomposition \cite{dcvo}. 
For $k+1$ points $\{u_0, \ldots, u_k\}$ in $P$,
let $V(u_0, \cdots, u_k)$ be defined as
\begin{align*}
  &V(u_0, \cdots, u_k) \\
  =& \{ x \in \R^L \mid d(x, u_0) = \cdots = d(x, u_k), \mbox{ and } \\
  & \hspace{4em} d(x, u_0) \leq d(x, u) 
    \mbox{ for } \forall u \in P \backslash \{u_0, \ldots, u_k\}\}.
\end{align*}
For $k=0$, the set $V(u_0)$ is called a Voronoi cell.
From the theory of Voronoi decompositions and Delaunay triangulations,
we have the following facts:
\begin{enumerate}
\item $V(u_0, \cdots, u_k) = V(u_0) \cap \cdots \cap V(u_k)$.
\item $\{u_0, \cdots, u_k\}$ forms a Delaunay $k$-simplex
  if and only if $V(u_0, \cdots, u_k)$ is
  not empty.
  \label{enum:delaunay_voronoi}
\item $V(u_0, \cdots, u_k)$ is closed, convex, and
  contained in a $(L-k)$-dimensional affine subspace
  $W(u_0, \cdots, u_k) = 
  \{ x \in \R^L \mid d(x,u_0) = \cdots = d(x, u_k)\}$.
  $W(u_0, \cdots, u_k)$ is orthogonal to
  the $k$-dimensional affine subspace spanned by
  $\{u_0, \ldots, u_k\}$.
  \label{enum:voronoi_1}
\item The boundary of $V(u_0, \cdots, u_k)$ 
  relative to $W(u_0, \cdots, u_k)$ is
  $\bigcup_{v \in Y(u_0, \cdots, u_k)} V(u_0, \cdots, u_k, v)$,
  where 
  $Y(u_0, \cdots, u_k) = \{v \in P\backslash\{u_0, \cdots, u_k\} \mid V(u_0, \cdots, u_k) \cap V(v) \neq \emptyset \}$.
  \label{enum:voronoi_2}
\item If $V(u_0, \cdots, u_k)$ is not empty,
  $Y(u_0, \cdots, u_k)$ is not empty.
  \label{enum:voronoi_3}
\item A Delaunay $k$-simplex $\{u_0, \ldots, u_k\}$ is attaching
  if and only if this simplex and
  $V(u_0, \cdots, u_k)$ have a non-empty intersection.
  \label{enum:delaunay_attaching}
\end{enumerate}

\begin{proof}(Lemma \ref{lemma:attached_simplex})
  First we consider the case of $L=2, \dim \sigma = 1$, and
  $\dim \tau = 2$.
  Let $u_1, u_2$ be the endpoints of 
  $\sigma$ and $u_0$ be the other vertex of 
  $\tau$.
  From the facts \ref{enum:delaunay_voronoi} and \ref{enum:voronoi_1} above, 
  $V(u_1, u_2)$ is not empty and 
  contained in the perpendicular bisector of $\sigma$.
  From the fact \ref{enum:voronoi_3}, $Y(u_1, u_2)$ is not empty and
  $V(u_1, u_2)$ is 
  a line with one endpoint
  or with two endpoints.
  Since $\sigma$ is not attaching,
  $V(u_1, u_2) \cap \sigma = \emptyset$ from
  the fact \ref{enum:delaunay_attaching}.
  Let $E$ be the endpoint of $V(u_1, u_2)$ close to $\sigma$, which is given by $E = V(u_1, u_2, u_0)$ from
  the fact \ref{enum:voronoi_2}.
  Then, from the definition of $E = V(u_1, u_2, u_0)$
  and the general position assumption,
  the following holds:
  \begin{align*}
    d(E, u_1) &= d(E, u_2) = d(E, u_0), \\
    d(E, u) &> d(E, u_1) \mbox{ for } \forall u \in P\backslash\{u_0, u_1, u_2\}.
  \end{align*}

  We take a new orthogonal coordinate system 
  on $\R^2$ satisfying
  \begin{itemize}
  \item $E = (0, 0)$ 
  \item $V(u_1, u_2)$ is contained in $\{ (t,0)\mid t \leq 0 \}$.
  \end{itemize}
  In this coordinate, $u_0$, $u_1$, and $u_2$, are described as
  \begin{align*}
    u_0 &= (r \cos \theta_0, r \sin \theta_0), \\
    u_1 &= (r \cos \theta_1, r \sin \theta_1), \\
    u_2 &= (r \cos \theta_1, -r \sin \theta_1), 
  \end{align*}
  since $u_0, u_1, u_2$ lie on the same circle whose
  center is $E$ and $\sigma$ is orthogonal to 
  $V(u_1, u_2)$
  (Figure~\ref{fig:coordinate_for_attached_simplex}).
  It follows from $\sigma \cap V(u_1, u_2) = \emptyset$ that
  $\theta_1$ must be contained in $[0, \pi/2)$.
  Since $E_\epsilon = (-\epsilon, 0) \in V(u_1, u_2) \backslash V(u_1, u_2, u_0)$ 
  for small $\epsilon > 0$,
  we have $d(E_\epsilon, u_1) < d(E_\epsilon, u_0)$ and
  \begin{align*}
    0 < &\ d(E_\epsilon, u_0)^2 - d(E_\epsilon, u_1)^2  \\
    = &\left((r \cos \theta_0 + \epsilon)^2 + (r \sin \theta_0)^2\right) \\
    - &\left((r \cos \theta_1 + \epsilon)^2 + (r \sin \theta_1)^2 \right)\\
    = &2r\epsilon(\cos \theta_0 - \cos\theta_1).
  \end{align*}
  Hence $ -\pi/2 < -\theta_1 < \theta_0  < \theta_1 < \pi/2$ holds and
  $\tau$ is an obtuse triangle whose longest edge
  is $\sigma$. 
  Therefore, for $\sigma' = \overline{u_0 u_1}$ and
  $\sigma'' = \overline{u_0 u_2}$,
  we can show that
  $r_{\sigma'} < r_{\tau}$ and $r_{\sigma''} < r_{\tau}$
  from the general position assumption and the definition of the birth radius of a simplex
  $\omega$
  \begin{align*}
    r_{\omega} = \inf \{& \mbox{the radius of } B \mid \ B \mbox{ is an
      empty ball} \\
    &\ \ \ \mbox{satisfying } \partial B \supset (\mbox{vertices of }
    \omega)\}.
  \end{align*}

  \begin{figure}[ht]
    \centering
    \includegraphics[width=0.7\hsize]{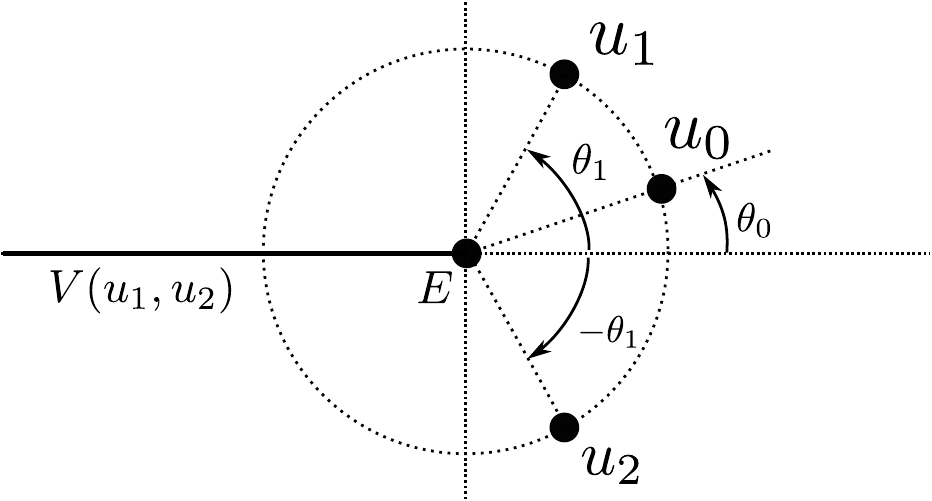}
    \caption{Orthogonal coordinate system for an attaching 2-simplex 
      $\tau = \{ u_0, u_1, u_2\}$.
    }
    \label{fig:coordinate_for_attached_simplex}
  \end{figure}
  
  Since any other birth radii are different from $r_{\tau}$ by
  the general position assumption, we have
  $\alp(P, r_{\tau}+) \backslash \alp(P, r_{\tau}) = \{\sigma, \tau\}$.
  Therefore, the inclusion
  \begin{equation*}
    |\{\sigma', \sigma'', u_0, u_1, u_2\}| 
    \hookrightarrow{} 
    |\{\tau, \sigma, \sigma', \sigma'', u_0, u_1, u_2, \}| ,
  \end{equation*}
  leads to the desired deformation retract.
  
  The case of $L=3, \dim\sigma=1$, and $\dim\tau = 2$ 
  can be proved in the same way by considering
  the plane that contains three vertices of $\tau$.
  
  The case of $L=3, \dim\sigma=2$, and $\dim\tau = 3$
  can also be proved in a similar way by replacing
  $V(u_1, u_2)$ in the above argument
  to $V(u_1, u_2, u_3)$.
  In this case, the 3-simplex $\tau$ is given by
  a tetrahedron whose four vertices are on a 
  half side of the circumsphere of the tetrahedron.
\end{proof}


The following corollary is a straightforward consequence
of Lemma~\ref{lemma:attached_simplex}.

\begin{cor}\label{cor:birth_death}
  For a simplex $\tau$, if $r_{\tau}$ is either a
  birth time or a death time in the persistence diagram,
  $\tau$ is an attaching simplex and
  does not attach any faces of $\tau$.
\end{cor}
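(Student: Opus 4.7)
The plan is to derive the corollary as a direct contrapositive of Lemma~\ref{lemma:attached_simplex}, which asserts that whenever $\sigma$ has an attaching simplex $\tau' \neq \sigma$, the inclusion $|\alp(P, r_{\tau'})| \hookrightarrow |\alp(P, r_{\tau'}+)|$ is a deformation retract, and in particular $r_{\tau'}$ is neither a birth time nor a death time. The two conclusions of the corollary correspond to ruling out the two distinct ways the hypothesis of this lemma could be triggered by $\tau$ itself.

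For the first assertion, that $\tau$ must be an attaching simplex, I argue by contrapositive. If $\tau$ were not attaching, then the construction of the alpha filtration recalled in Section~\ref{sec:settings} forces $\tau$ to be attached by some attaching simplex $\tau' \supsetneq \tau$, so that $r_\tau = r_{\tau'}$. Applying Lemma~\ref{lemma:attached_simplex} with the roles $\sigma := \tau$ and attaching simplex $\tau'$ (noting $\tau \neq \tau'$ since $\tau' \supsetneq \tau$), I conclude that $r_{\tau'} = r_\tau$ is neither a birth nor a death time, contradicting the hypothesis on $r_\tau$.

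For the second assertion, that $\tau$ attaches no proper face of itself, I again use the contrapositive. Suppose some proper face $\sigma \subsetneq \tau$ has $\tau$ as its attaching simplex. Because $\sigma \neq \tau$, Lemma~\ref{lemma:attached_simplex} applies directly with this choice of $\sigma$ and attaching simplex $\tau$, yielding that $r_\tau$ is neither a birth nor a death time, again contradicting the hypothesis.

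The main obstacle is essentially nonexistent at this point: all the geometric and topological content has been absorbed into Lemma~\ref{lemma:attached_simplex}, and the corollary is a clean two-step contrapositive. The only care required is to separate the two failure modes, namely that $\tau$ is not an attaching simplex at all, and that $\tau$ is attaching but additionally attaches some proper face, and to verify in each case that the pair fed into the lemma satisfies the required $\sigma \neq \tau'$ condition.
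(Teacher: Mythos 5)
Your argument is correct and is exactly the ``straightforward consequence'' of Lemma~\ref{lemma:attached_simplex} that the paper intends (the paper itself leaves the proof implicit). The two contrapositive cases you separate --- $\tau$ not attaching, hence attached by some $\tau'\supsetneq\tau$ with $r_\tau=r_{\tau'}$, versus $\tau$ attaching a proper face $\sigma\subsetneq\tau$ --- together cover precisely the hypotheses under which the lemma forbids $r_\tau$ from being a birth or death time, so nothing is missing.
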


  

\begin{proof} (Proposition \ref{prop:birth_death_pair})
 Since $d$ is a death time in $\pdiag_{\ell}(\alp(P))$,
 an $(\ell+1)$-simplex $\tau$ is born at time $d$, hence
 $d = r_{\tau}$.
 From Corollary \ref{cor:birth_death},
 $\tau$ is an attaching simplex and does not attach any faces of $\tau$.
 With the general position assumption,
 $\tau$ is the unique simplex satisfying
 $d = r_{\tau}$.
 
 Next, we consider the change at time $b$.
 Since the birth radius of any simplex 
 is not contained in $(b, d)$ and $\tau$ is unique
 as above, 
 $\alp(P, \alpha) = \alp(P, r_{\tau}+) \backslash \{\tau\}$
 for any $\alpha \in (b, d]$.
 Therefore,
 one of the $\ell$-dimensional faces $\sigma$ of $\tau$ appears at
 time $b$. Otherwise, the generator by
 $\partial \tau$
 keeps unchanged at time $b$ and
 this contradicts the fact that $b$ is the birth time of the pair $(b,d)$.
The simplex $\sigma$ is an attaching simplex from Corollary \ref{cor:birth_death}, and hence this concludes the proof.
\end{proof}

Now, we show that if a birth-death pair $(b, d)$ is close enough $b \approx d$ for the birth radius of any simplex not to be contained in $(b,d)$, then a singular value close to zero appears in the derivative of the persistence map. 
In this case, from Proposition \ref{prop:birth_death_pair},
there exist an $(\ell+1)$-simplex $\tau$ and its face $\sigma$ satisfying
$r_{\tau} = d$ and $r_{\sigma} = b$.
Let $p_1, \ldots, p_{\ell}$ be the vertices of $\sigma$ and
$p_0, p_1, \ldots, p_{\ell}$ be the vertices of $\tau$.
Let $c_0$ be the center of the minimal circumsphere of $\tau$ and
$c_1$ be the center of the minimal circumsphere of $\sigma$.
Let $r$ be the length of $\overline{p_0 c_0}$,
the radius of minimal circumsphere of $\tau$, 
and $\theta$ be the angle of $\angle c_0 p_1 c_1$
(Figure~\ref{fig:r_theta}).
From the fact \ref{enum:voronoi_1} of the Voronoi decomposition,
$\overline{c_0 c_1}$ is orthogonal to $\sigma$. Therefore 
we can represent $r_{\tau}$ and $r_{\sigma}$ as follows:
\begin{eqnarray*}
 r_{\tau} &=& r, \\
 r_{\sigma} &=& r \cos \theta.
\end{eqnarray*}

\begin{figure}[htbp]
 \centering
 \includegraphics[width=0.45\hsize]{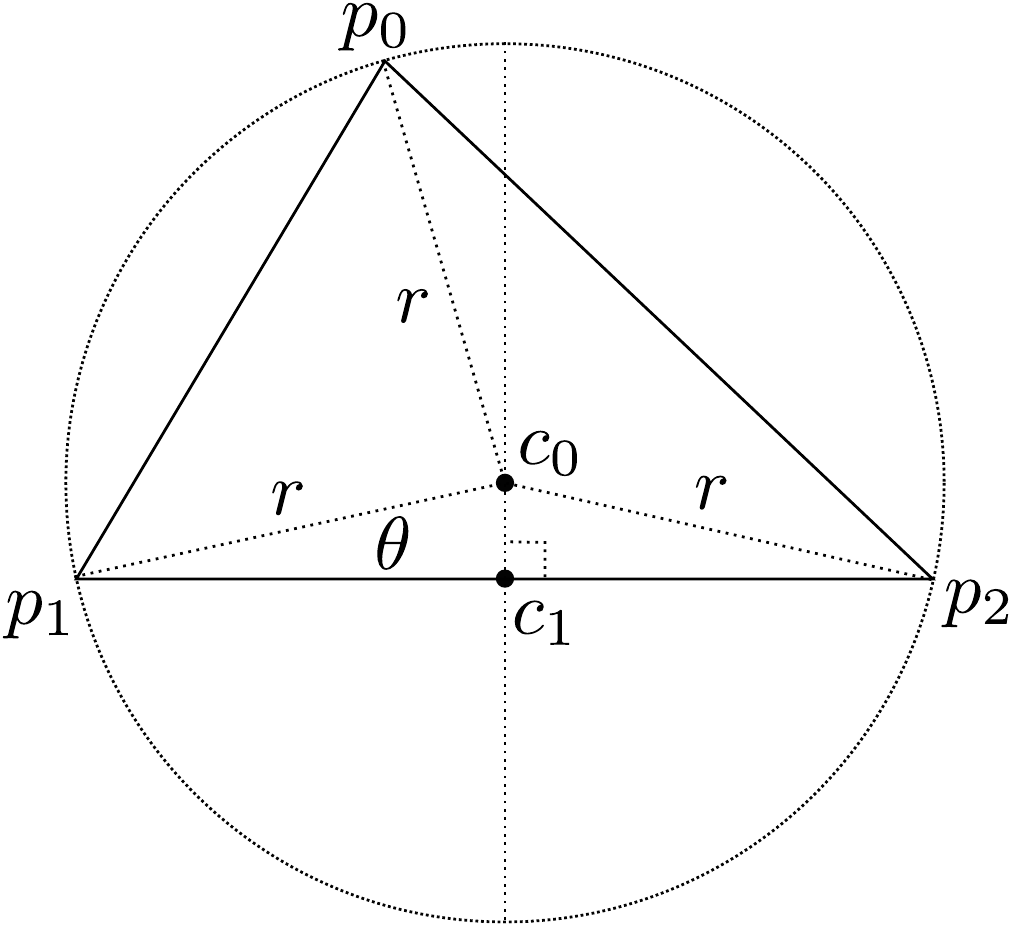}
 \ \ 
 \includegraphics[width=0.45\hsize]{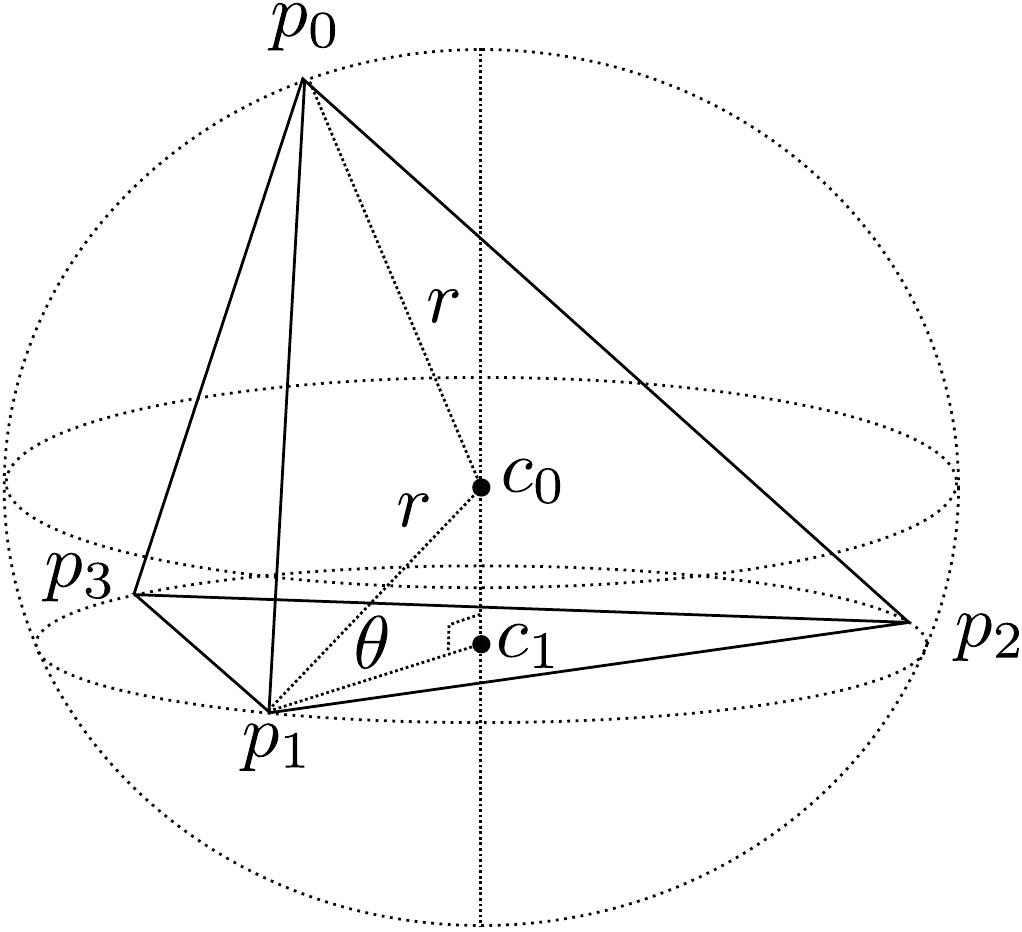}
 \caption{
 In the left figure, 
 $c_0$ is the center of the circumcircle of the 
 triangle $\{p_0, p_1, p_2\}$,
 $c_1$ is the center of $\overline{p_1 p_2}$,
 $r$ is the radius of the circumcircle,
 and $\theta$ is $\angle c_0 p_1 c_1$.
 In the right figure,
 $c_0$ is the center of the circumsphere of 
 the tetrahedron $\{p_0, p_1, p_2, p_3\}$,
 $c_1$ it the center of the circumcircle
 of the triangle $\{p_1, p_2, p_3\}$,
 $r$ is the radius of the circumsphere,
 and $\theta$ is $\angle c_0 p_1 c_1$.
 }
 \label{fig:r_theta}
\end{figure}

We consider a map $f:U \to \R^2$ given by
\begin{eqnarray*}
 f: u \mapsto 
  \begin{bmatrix}
   r_{\tau} \\ r_{\sigma}
  \end{bmatrix}
\end{eqnarray*}
assigning one birth-death pair.
This map is one component of the persistence map 
and can be decomposed as follows
\begin{equation*}
\xymatrix{
 u \ar@{|->}[r]^-{f_1}& (r, \theta) \ar@{|->}[r]^-{f_2} & (r_{\tau}, r_{\sigma})
  }.
\end{equation*}

We can easily show the following facts:
\begin{itemize}
\item $r_{\tau} = r_{\sigma}$ if and only if 
  $\theta = 0$.
\item  If $\theta = 0$, 
 \begin{equation*}
  Df_2 = 
   \begin{bmatrix}
    1 & 0 \\
    1 & 0
   \end{bmatrix}
 \end{equation*}
 holds.
\end{itemize}
From these facts, we have $\theta \approx 0$ for $r_{\tau} \approx r_{\sigma}$ and
 \begin{equation*}
  Df_2 \approx 
   \begin{bmatrix}
    1 & 0 \\
    1 & 0
   \end{bmatrix}.
 \end{equation*}
 Since the matrix of the right hand side is not surjective, $Df = Df_2 \circ Df_1$ has
 a singular value close to zero,
 and hence the derivative of the persistence map has 
 a singular value close to zero.

\subsubsection{For the case of Vietoris-Rips filtrations}
For the case of Vietoris-Rips filtrations, a birth-death pair $(b,d)$ with $b\approx d$ does not necessarily imply the existence of a singular value close to zero. 
%
%
We show such an example 
for a point cloud $P=\{A,B,C,D\}$ in $\R^2$ and $\pdiag_1(\vr(P))$.
We assume the followings:
\begin{itemize}
 \item $|\overline{AB}|, |\overline{BC}|, |\overline{CD}|
       < |\overline{AD}| < |\overline{BD}| < |\overline{AC}|$.
 \item $|\overline{AD}| \approx |\overline{BD}|$.
\end{itemize}
From the assumption, the triangle $ABD$ is close to an isosceles triangle.
The persistence diagram $\pdiag_1(\vr(P))$ has a unique
birth-death pair $(b, d)$ where $b = |\overline{AD}|$ and
$d = |\overline{BD}|$ (Figure~\ref{fig:rips_b_approx_d}).

\begin{figure}[htbp]
 \centering
 \includegraphics[width=0.7\hsize]{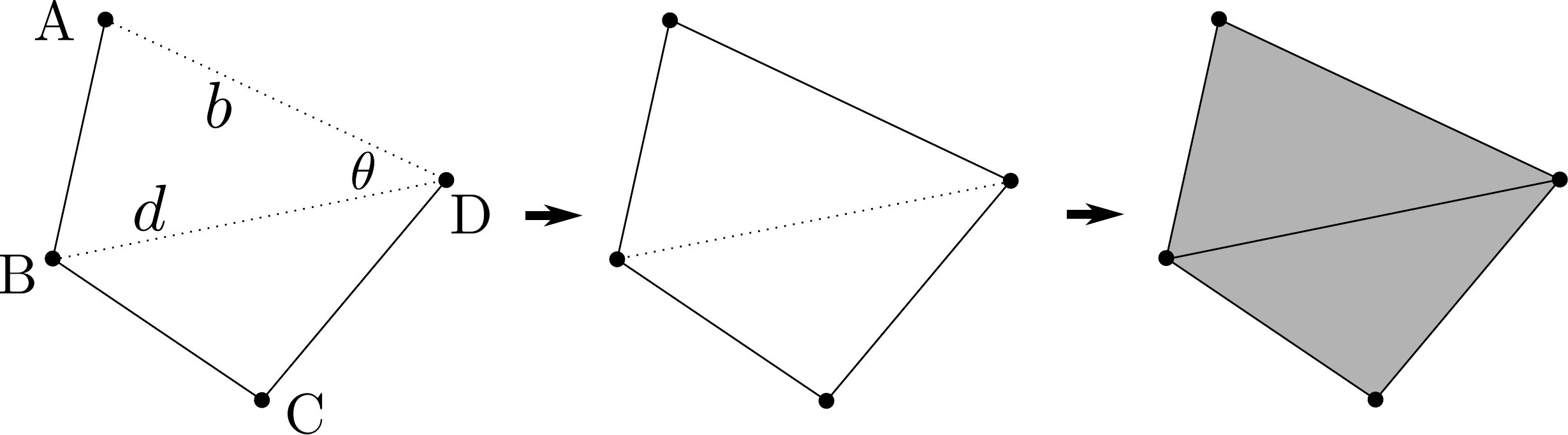}
 \caption{A point cloud with four points.
 $\pdiag_1(\vr(P))$
 has a unique birth-death pair $(b, d)$}
 \label{fig:rips_b_approx_d}
\end{figure}

The persistence map $f$ is
\begin{equation*}
 f = \begin{bmatrix}
      b \\
      d \\
     \end{bmatrix},
\end{equation*}
and from the computation in Lemma \ref{lemma:differentiable_g_rips},
$Df \cdot (Df)^T$ is
\begin{equation*}
 Df \cdot (Df)^T =
  \begin{bmatrix}
   2 & \cos \theta \\
   \cos \theta & 2 \\
  \end{bmatrix},
\end{equation*}
where $\theta = \angle ADB$.
Since eigenvalues of $Df \cdot (Df)^T$ are squares of singular values of
$Df$, the singular values are
$\sqrt{2 \pm \cos \theta}$
and are away from zero, although $b \approx d$.

\section{Computations}
\label{sec:computations}

In this section, we present some numerical examples of continuations of point clouds using
persistence diagrams. Alpha filtrations are used for all examples, and the coordinate system
described in Section~\ref{sec:symmetry} is adopted to eliminate the translation and rotation
symmetries.


\begin{ex}[Deformation of a tetrahedron]
\label{ex:thetrahedron}
As a first example, we consider alpha filtrations constructed from four points in $\R^3$
(a tetrahedron) and apply our continuation algorithm to $\pdiag_2$. We take as the initial
point cloud $P = \{u_0, u_1, u_2, u_3\}$, where $u_0 = (0, 0, 0)$, $u_1 = (8, 0, 0)$,
$u_2 = (5, 6, 0)$, and $u_3 = (4, 2, 6)$. The $2$nd persistence diagram of $P$ is
$\pdiag_2 = \{ (4.42719, 4.59015) \}$. From this initial data, we try to deform $\pdiag_2$ to
the target persistence data $\{ (8.42719, 8.89015) \}$
using our continuation method. Due to the coordinate system adopted to eliminate symmetries,
\eqref{eq:symmetry_restriction}, we have that the degree of freedom of the point cloud is six,
and the persistence map can be
expressed as $f \colon U \subset \R^6 \to \R^2$. For this example we used $\|\Delta v\| = 0.01$
as the step size in the continuation, and $\epsilon = 0$\footnote{The $2$-dimensional diagram
$\pdiag_2$ of a tetrahedron has at most one birth-death pair, hence we cannot cut off points
close to the diagonal and therefore we use $\epsilon = 0$.}.

Figures~\ref{fig:point_clouds_ex1} and \ref{fig:persistence_diagrams_ex1} show the point clouds
and the persistence diagrams during the continuation process.
In this computation we successfully reached the target persistence diagram $\{ (8.42719, 8.89015) \}$.
However notice that there seems to be a non-smooth point on the blue curves given by the point
clouds during continuation in Figure~\ref{fig:point_clouds_ex1}. To try to understand this event, we look at
the birth radii of the $2$-simplices during the continuation process (Figure~\ref{fig:birth_radii_ex1}).
Notice that at some point during the continuation process, two of the birth radii coincide, hence breaking
the second condition of the alpha general position assumption (Definition~\ref{definition:alpha_general}).
This point corresponds exactly to the non-smooth point in Figure~\ref{fig:birth_radii_ex1}. This is a point
where the derivative $Df$ is not uniquely defined, hence the continuation curve is not smooth there. The
fact that we are performing the continuation numerically, and hence the birth radii are very close but
not exactly equal, makes it possible to go forward with the continuation process. At each step of the
continuation the birth radius that is slightly larger is used to compute $Df$, and the $2$-simplex
corresponding to this radius can change at each continuation step. We can see this from the
singular values of $Df$ show in Figure~\ref{fig:singular_values_ex1}.

\begin{figure}[htbp!]
\centering
\includegraphics[width=0.45\textwidth]{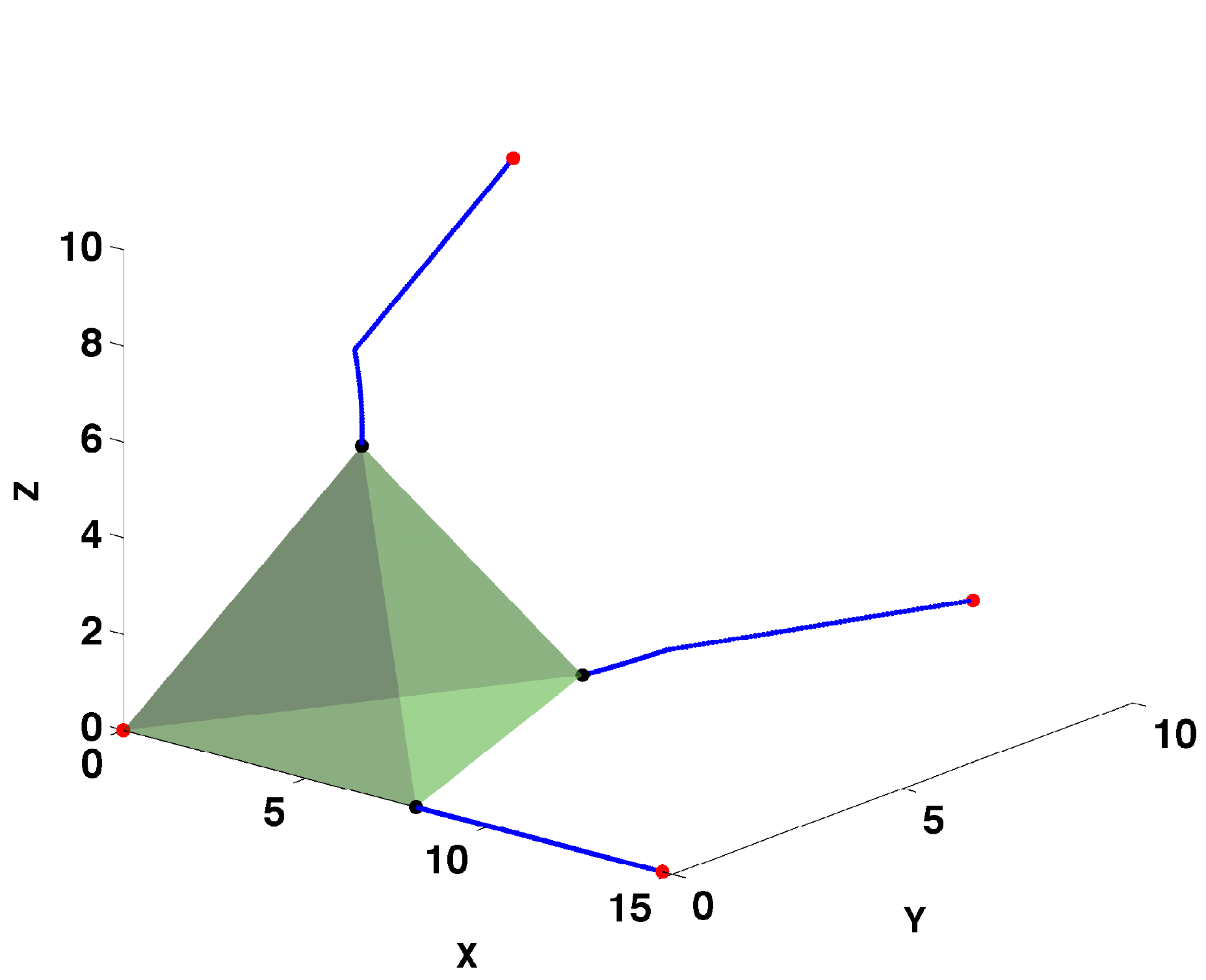}
\caption{Deformation of the point cloud during continuation. The black dots are the initial
point cloud and the red dots are the final one. The blue curves show the movement
of the points during the continuation. Note that the first vertex is fixed at the origin and
the second one is only allowed to move along the $x$-axis.}
\label{fig:point_clouds_ex1}
\end{figure}

\begin{figure}[htbp!]
\centering
\includegraphics[width=0.45\textwidth]{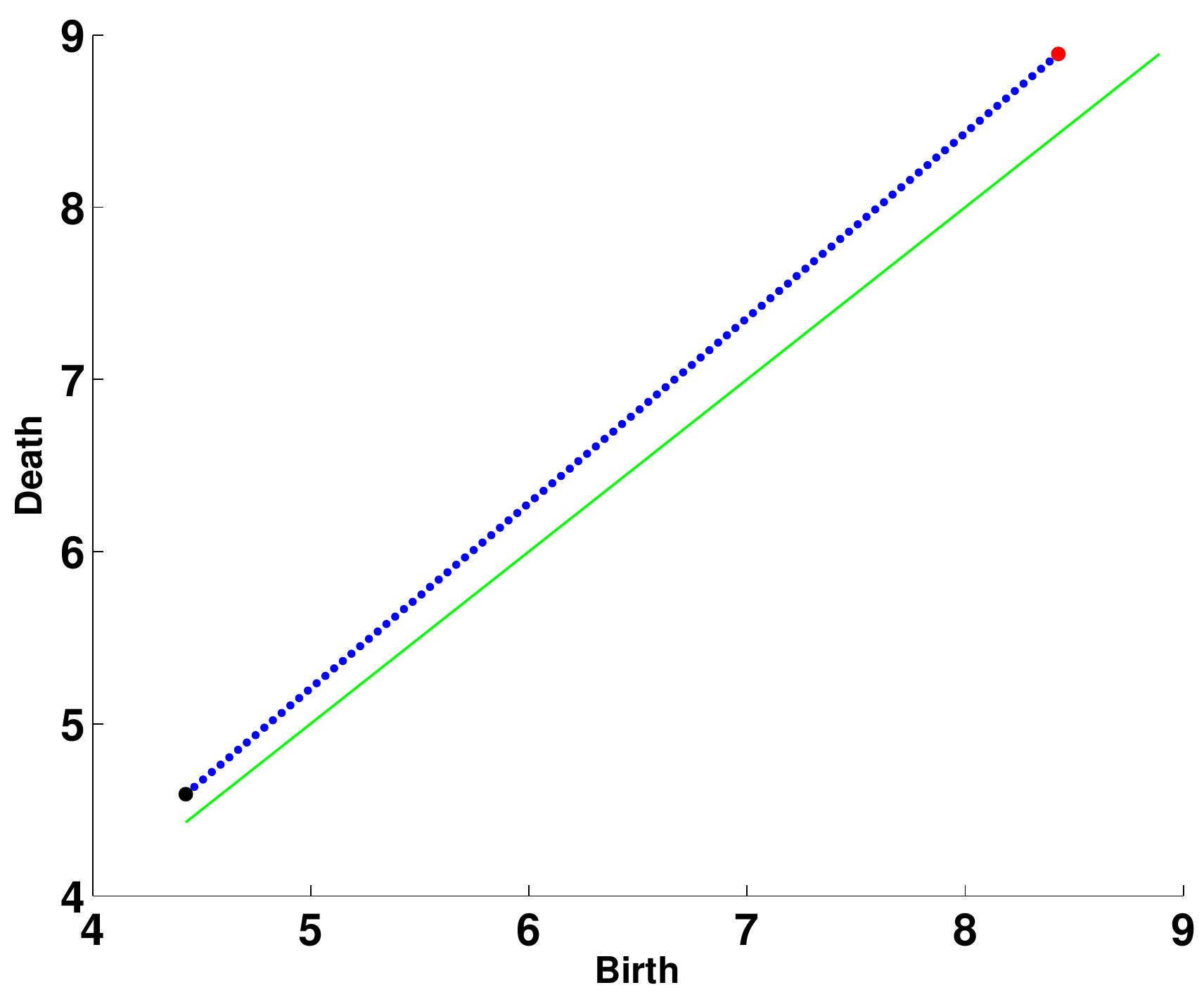}
\caption{Persistence diagrams $\pdiag_2$ during the continuation process. The black dot
represents the initial persistence diagram and the red dot represents the target diagram.
The blue dots represent the persistence diagrams during the continuation process.}
\label{fig:persistence_diagrams_ex1}
\end{figure}

\begin{figure}[htbp!]
\centering
\includegraphics[width=0.45\textwidth]{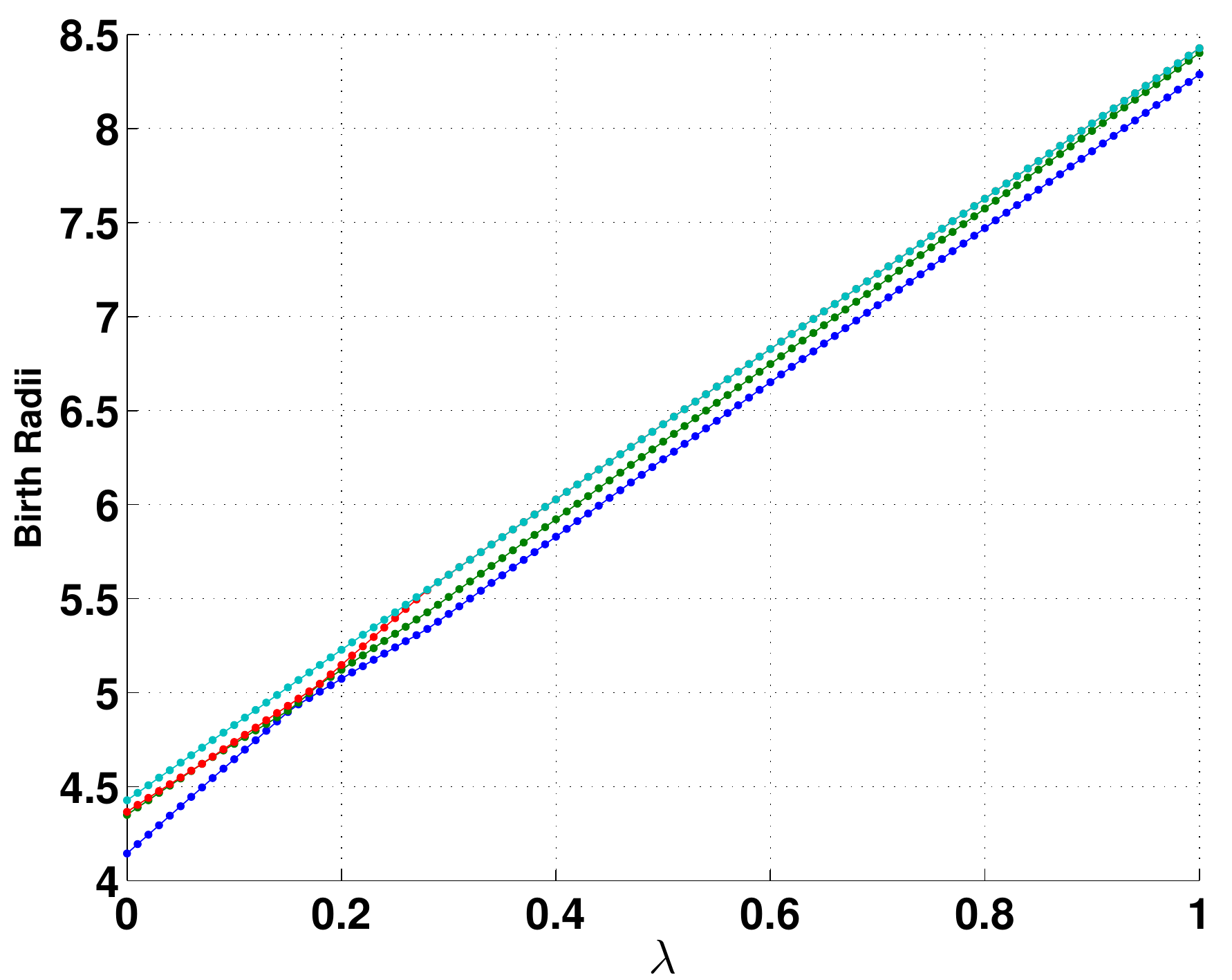}
\caption{Birth radii of the $2$-simplices along the continuation curves in
Figure~\ref{fig:point_clouds_ex1}.}
\label{fig:birth_radii_ex1}
\end{figure}

\begin{figure}[htbp!]
\centering
\includegraphics[width=0.45\textwidth]{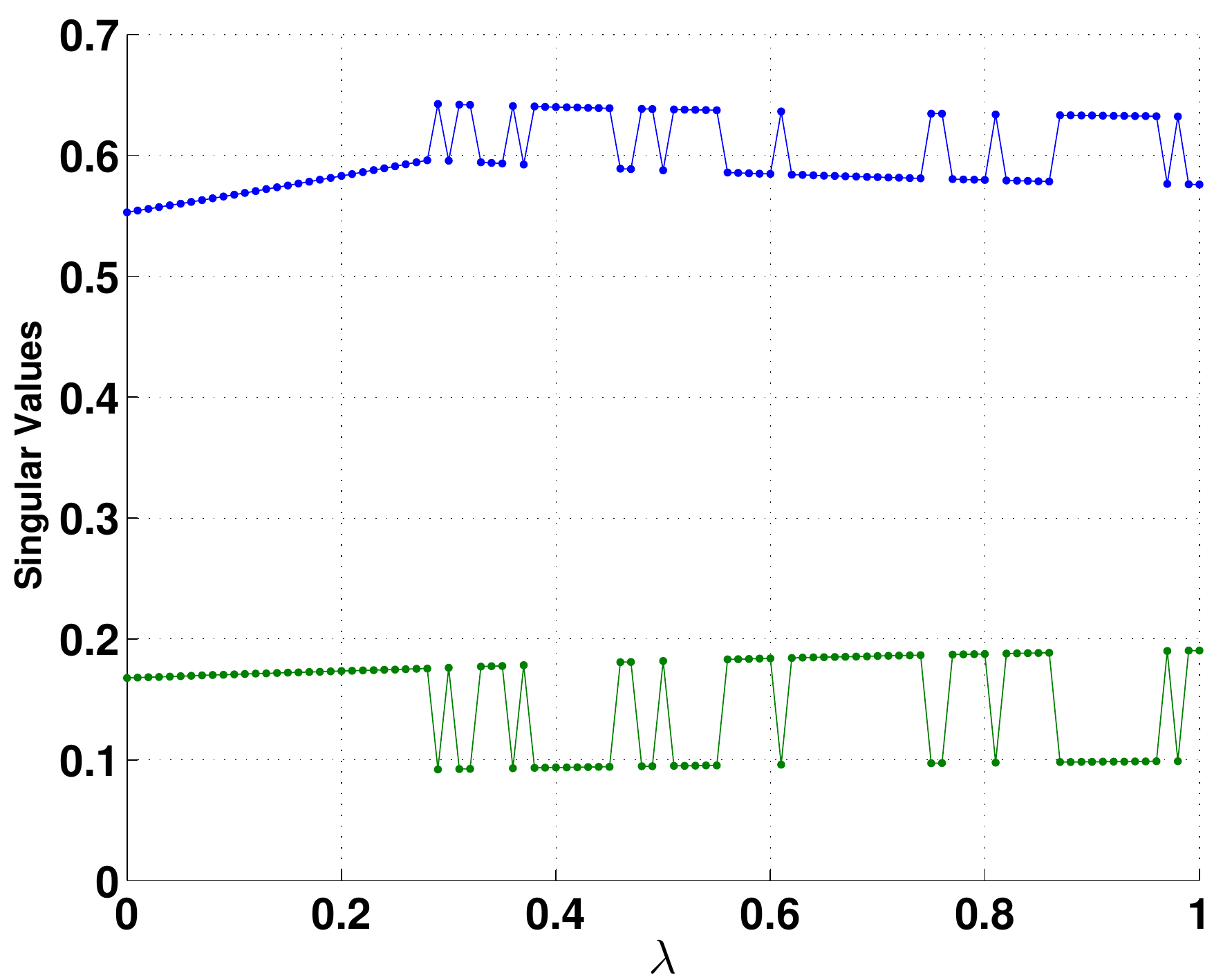}
\caption{Singular values of the derivative $Df$ along the continuation in Figure~\ref{fig:point_clouds_ex1}.}
\label{fig:singular_values_ex1}
\end{figure}

\end{ex}


\begin{ex}[Image of the persistence map]
\label{ex:existence_regions}
In this example we use the same point cloud (tetrahedron) as in Example~\ref{ex:thetrahedron}
to explore the image of the persistence map. For a tetrahedron, the image of the persistence map is the strip
region between the diagonal and the line of persistence diagrams of regular tetrahedrons
(see Figure~\ref{fig:persistence_diagrams_regions2} and Theorem~\ref{them:ratio_bound_triangle} in the
Appendix for a proof of the $\pdiag_1$ case). In this example we used $\|\Delta v\| = 0.001$ and $\epsilon = 0$.

From the initial persistence diagram $\pdiag_2 = \{ (4.42719, 4.59015) \}$ we try to continue to
the target persistence $\{ (6.42719, 7.09015) \}$, which is outside of the image of the persistence map.
Hence, as expected, in this case we fail to reach the target persistence and can only continue up to the
boundary of the image, as we can see in Figure~\ref{fig:persistence_diagrams_regions2}. 
As we approach the boundary of the image, the method fails because the number of Newton iterations needed
for convergence increases dramatically as is shown in Figure~\ref{fig:newton_iterations2}.
In the last steps of the continuation the birth radii of the $1$-simplices are very similar and the
birth radii of the $2$-simplices are all virtually the same as we can see in Figures~\ref{fig:birth_radii_dim1}
and~\ref{fig:birth_radii_dim2}, hence confirming that we have continued to a regular tetrahedron.
Notice also from Figures~\ref{fig:birth_radii_dim1} and~\ref{fig:birth_radii_dim2} that two or more
birth radii are equal during the continuation, hence the general position assumption
(Definition~\ref{definition:alpha_general}) is violated. However, as noted in
Example~\ref{ex:thetrahedron}, the continuation method still works as long as we are within
the image of the persistence map.

\begin{figure}[htbp!]
\centering
\includegraphics[width=0.5\textwidth]{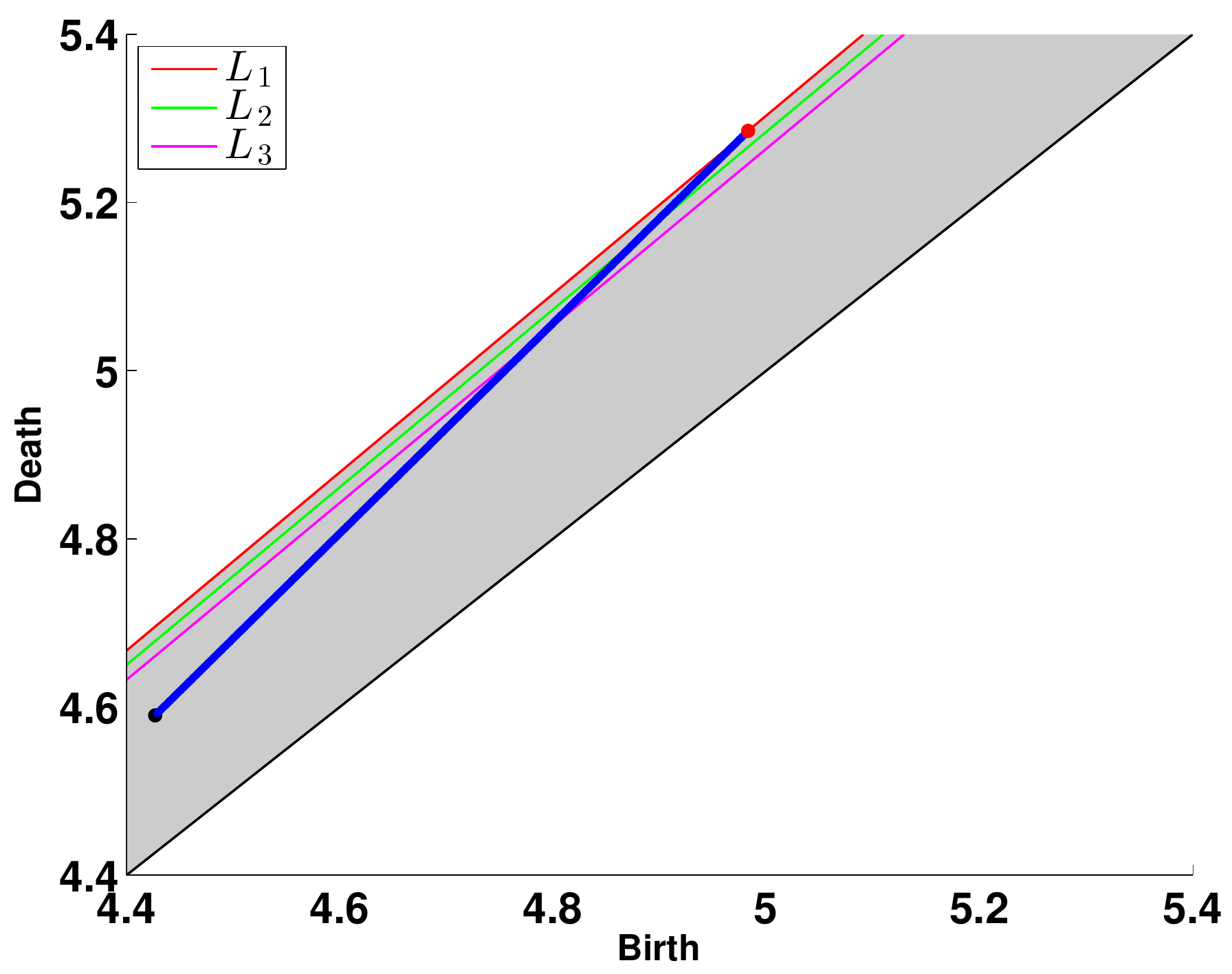}
\caption{The image of the persistence map of $\pdiag_2$ for
one tetrahedron (four points in $\R^3$) is the shaded region between the diagonal and the line
$L_1$ of the persistence diagrams of regular tetrahedrons (obtained by similarity deformations).
The lines $L_2$ and $L_3$ correspond to the persistence diagrams of tetrahedrons with three
and two congruent faces, respectively. The blue line shows the continuation of the initial point
cloud all the way to the boundary of the image.}
\label{fig:persistence_diagrams_regions2}
\end{figure}

\begin{figure}[htbp!]
\centering
\includegraphics[width=0.45\textwidth]{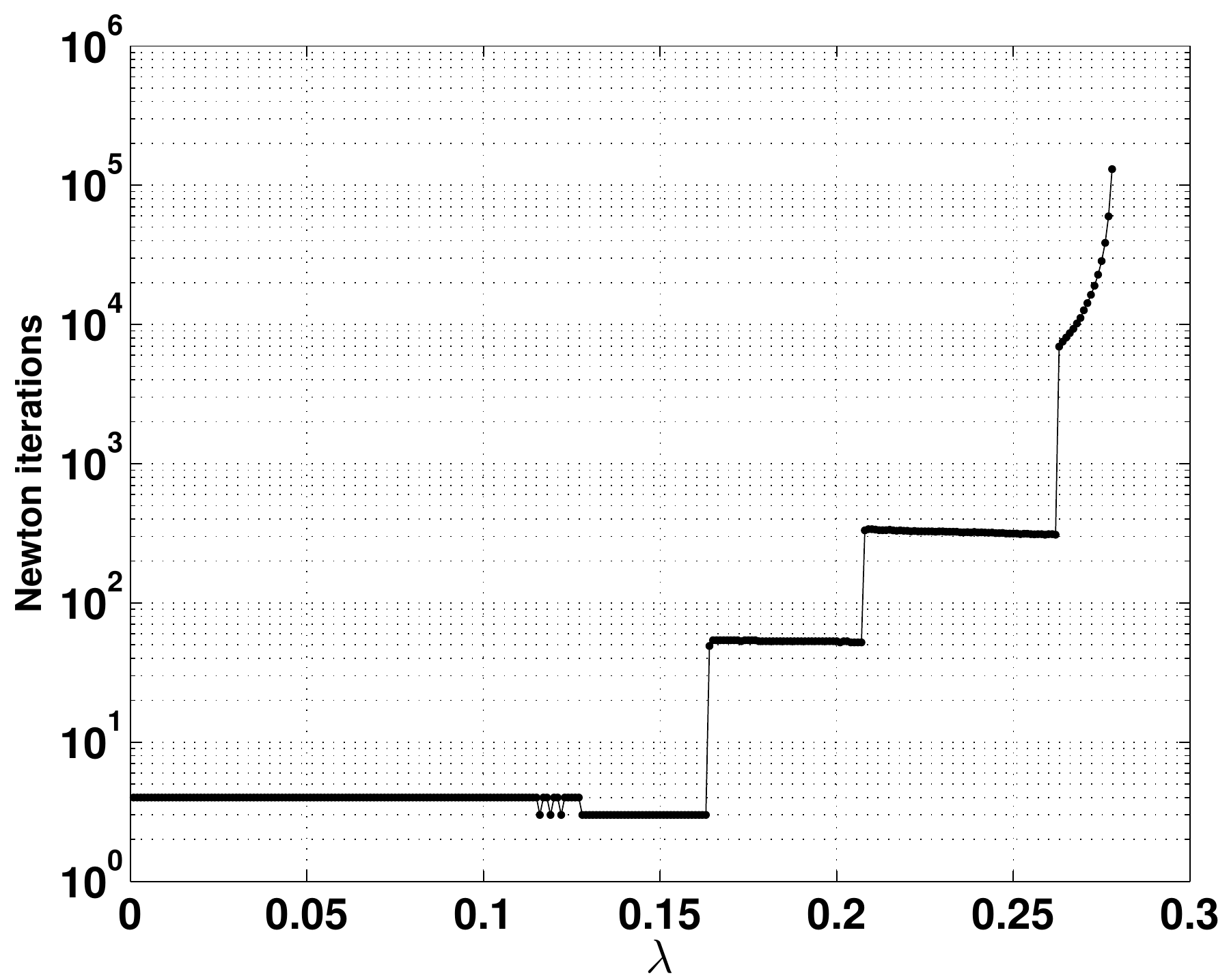}
\caption{Number Newton iterations during each step of the continuation in
Example~\ref{ex:existence_regions}.}
\label{fig:newton_iterations2}
\end{figure}

\begin{figure}[htbp!]
\centering
\includegraphics[width=0.45\textwidth]{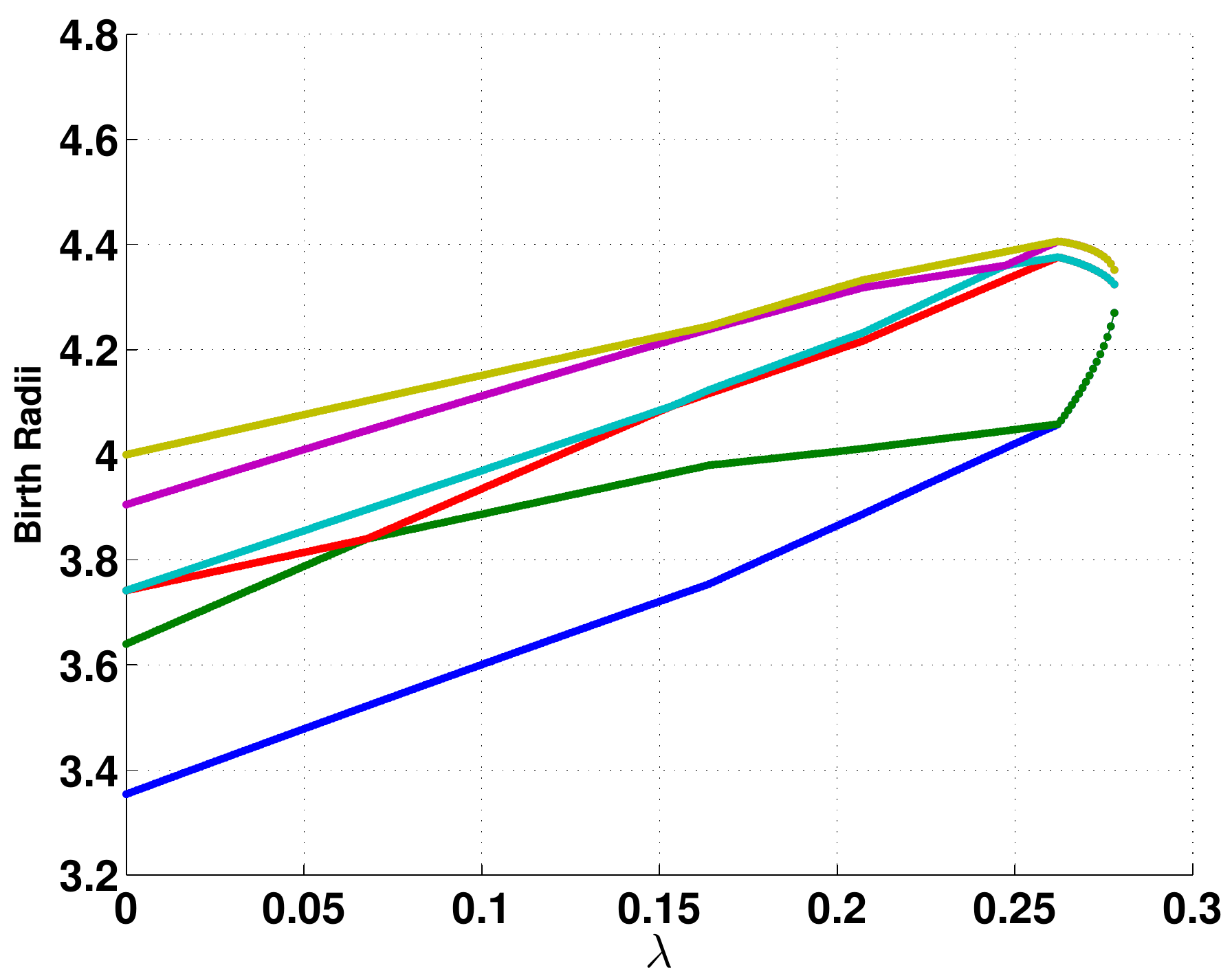}
\caption{Birth radii of the $1$-simplices along the continuation in
Example~\ref{ex:existence_regions}.}
\label{fig:birth_radii_dim1}
\end{figure}

\begin{figure}[htbp!]
\centering
\includegraphics[width=0.45\textwidth]{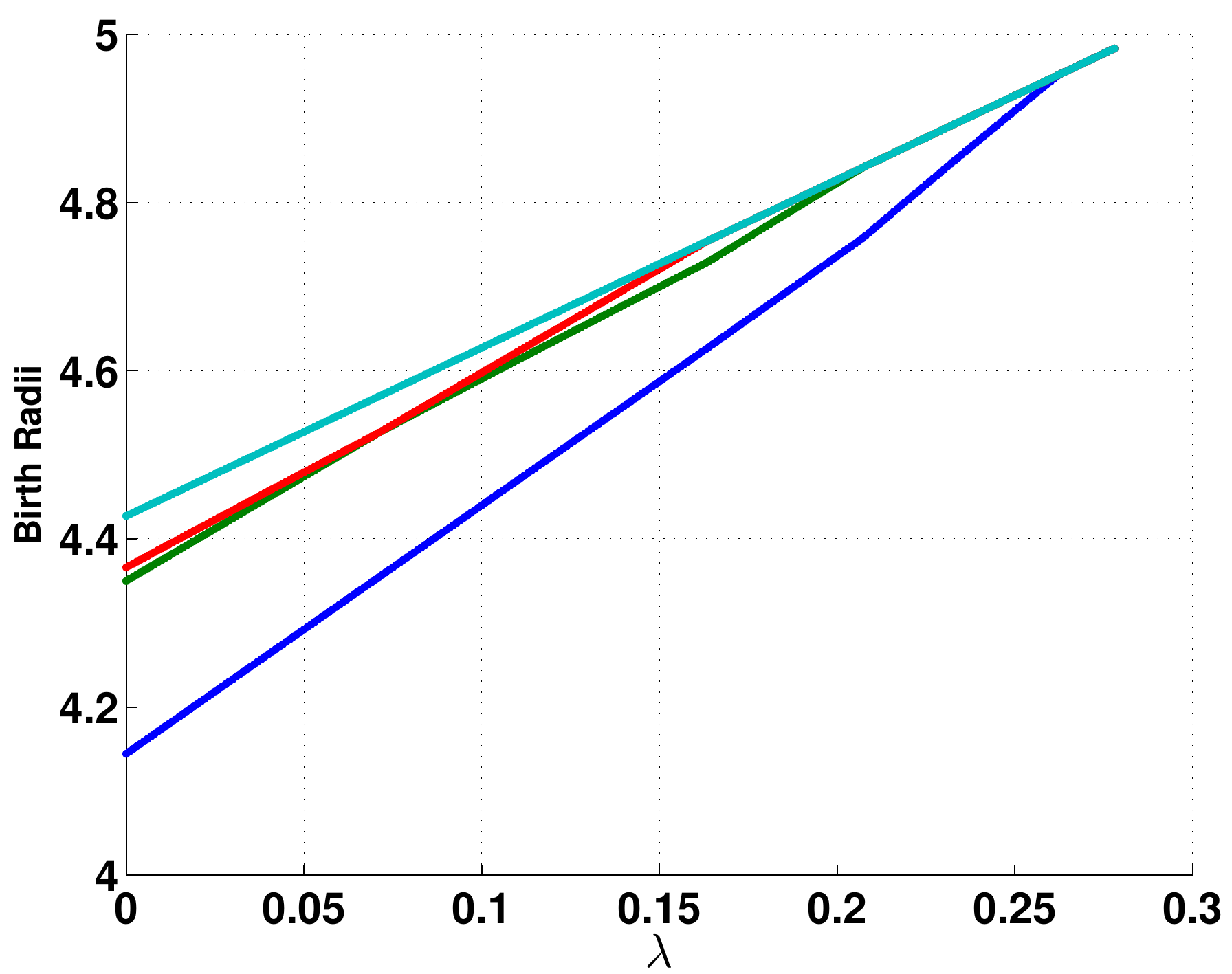}
\caption{Birth radii of the $2$-simplices along the continuation in
Example~\ref{ex:existence_regions}.}
\label{fig:birth_radii_dim2}
\end{figure}

\end{ex}


\begin{ex}[Towards the diagonal]
\label{ex:towards_diagonal}
In this example we take the point cloud $P = \{u_0, u_1, u_2, u_3\}$, where $u_0 = (0, 0, 0)$,
$u_1 = (9.991, 0, 0)$, $u_2 = (4.9955, 8.65246, 0)$, and $u_3 = (4.9955, 2.88415, 8.15762)$,
which represents a nearly regular tetrahedron, and try to continue the persistence diagram
$\pdiag_2$ towards the diagonal. From the discussion in Section~\ref{sec:zsv} this will lead
to singular values close to zero. The $2$nd persistence diagram of $P$ is
$\pdiag_2 = \{ (5.76831, 6.11821) \}$, and the target persistence diagram is set to be $\{ (5.94841, 5.94841) \}$
which is on the diagonal. In the computations we used $\|\Delta v\| = 0.001$ and $\epsilon = 0$.

In this case the computations work well all the way to a point nearly on the diagonal. In
Figure~\ref{fig:persistence_diagrams5} we show the persistence diagrams along the continuation.
The singular values of the derivative are shown in Figure~\ref{fig:singular_values5}.
As expected, one of the singular values approaches zero towards the end of the
continuation. However, in spit of this, the continuation works all the way to a point
essentially on the diagonal. Note that we cannot continue to a point exactly on the
diagonal, since the persistence diagram would be empty in that case. However the
persistence diagram that we arrive at the end of the continuation in this example is
only ``numerically'' on the diagonal, that is, it is on the diagonal up to the error tolerance
of the Newton-Raphson method.
As described in Section~\ref{sec:newton_pseudo_inverse} the method will fail if we try
to continue to a point exactly on the diagonal, since in that case we would have a zero
singular value.

\begin{figure}[htbp!]
\centering
\includegraphics[width=0.45\textwidth]{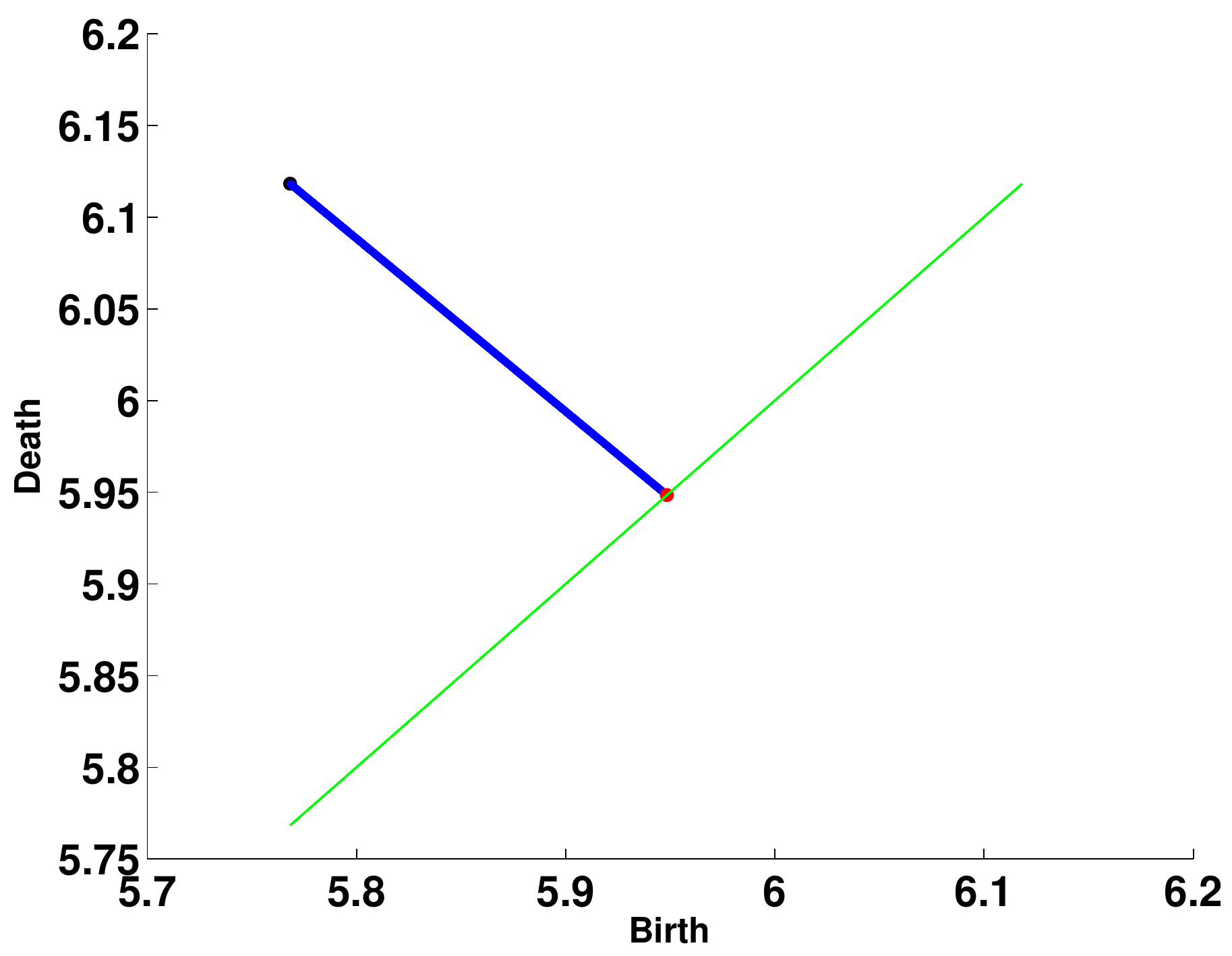}
\caption{Persistence diagrams $\pdiag_2$ during the continuation process. The black dot
represents the initial persistence diagram and the red dot represents the final diagram.
Notice that the final diagram is ``numerically'' on the diagonal.}
\label{fig:persistence_diagrams5}
\end{figure}

\begin{figure}[htbp!]
\centering
\includegraphics[width=0.45\textwidth]{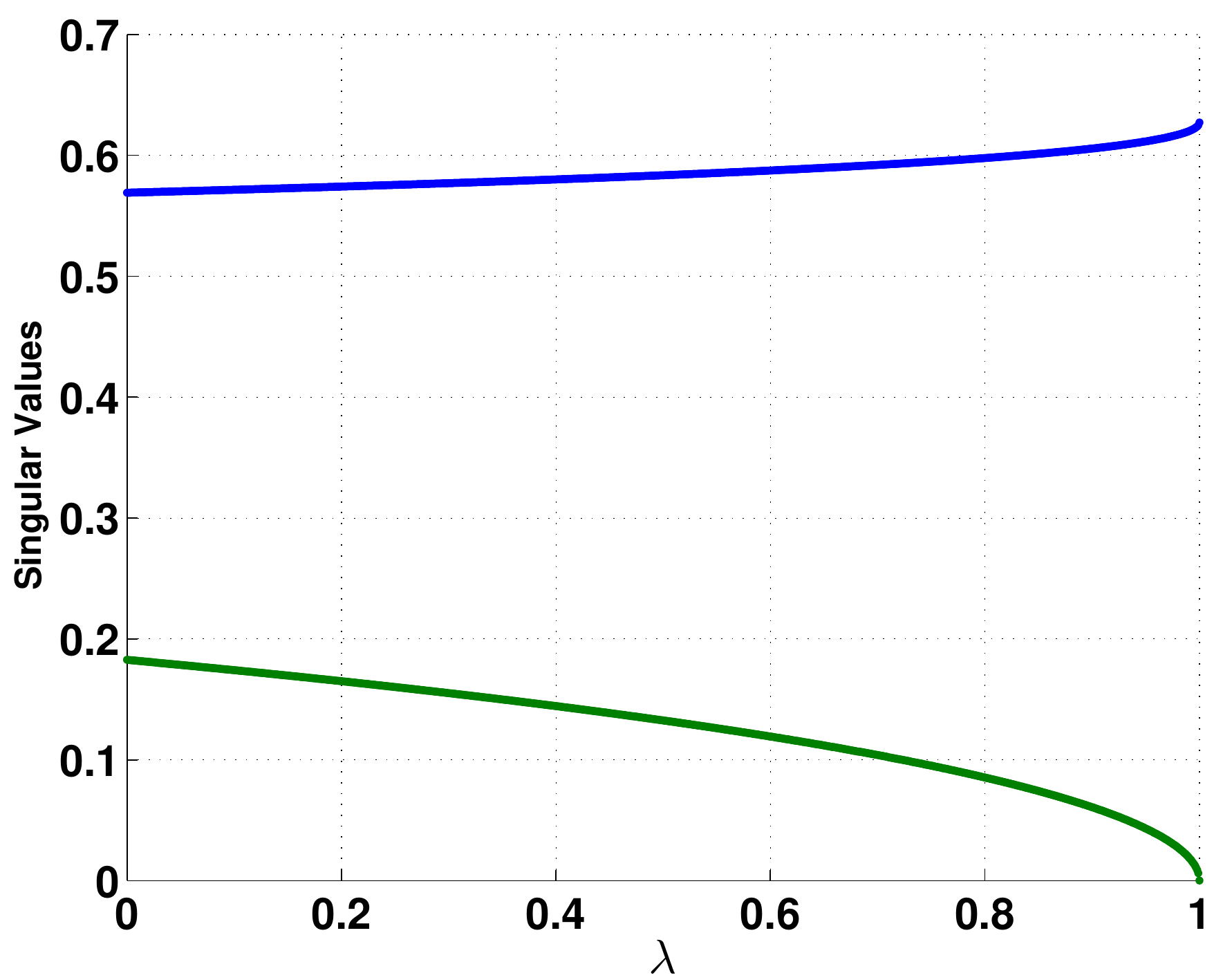}
\caption{Singular values of the derivative $Df$ along the continuation in Example~\ref{ex:towards_diagonal}. Notice that one of the singular
values approach zero as we reach the end of the continuation.}
\label{fig:singular_values5}
\end{figure}


\end{ex}


\begin{ex}[Continuation of $\pdiag_1$]
\label{ex:pd1_example}
In this example we take the point cloud data $P = \{u_0, u_1, u_2, u_3\}$, where $u_0 = (0, 0, 0)$,
$u_1 = (1, 0, 0)$, $u_2 = (1.1, 1.2, 0)$, and $u_3 = (0.5, 0.6, 1.3)$, and try to continue 
the initial persistence diagram $\pdiag_1 = \{ (0.758288, 0.803195), (0.776209, 0.834393) \}$
to the target $1$-dimensional persistence diagram $\{ (0.770801, 0.817236), (0.798346, 0.863075) \}$.
In these computations we used $\|\Delta v\| = 0.001$ and $\epsilon = 0$. There were only two points in
the persistence diagram during all the steps of the continuation, hence the choice $\epsilon = 0$.
The computations worked well all the way to the target persistence. In Figure~\ref{fig:persistence_diagrams8}
we show the point cloud and the persistence diagrams along the continuation.


\begin{figure}[htbp!]
\centering
\includegraphics[width=0.45\textwidth]{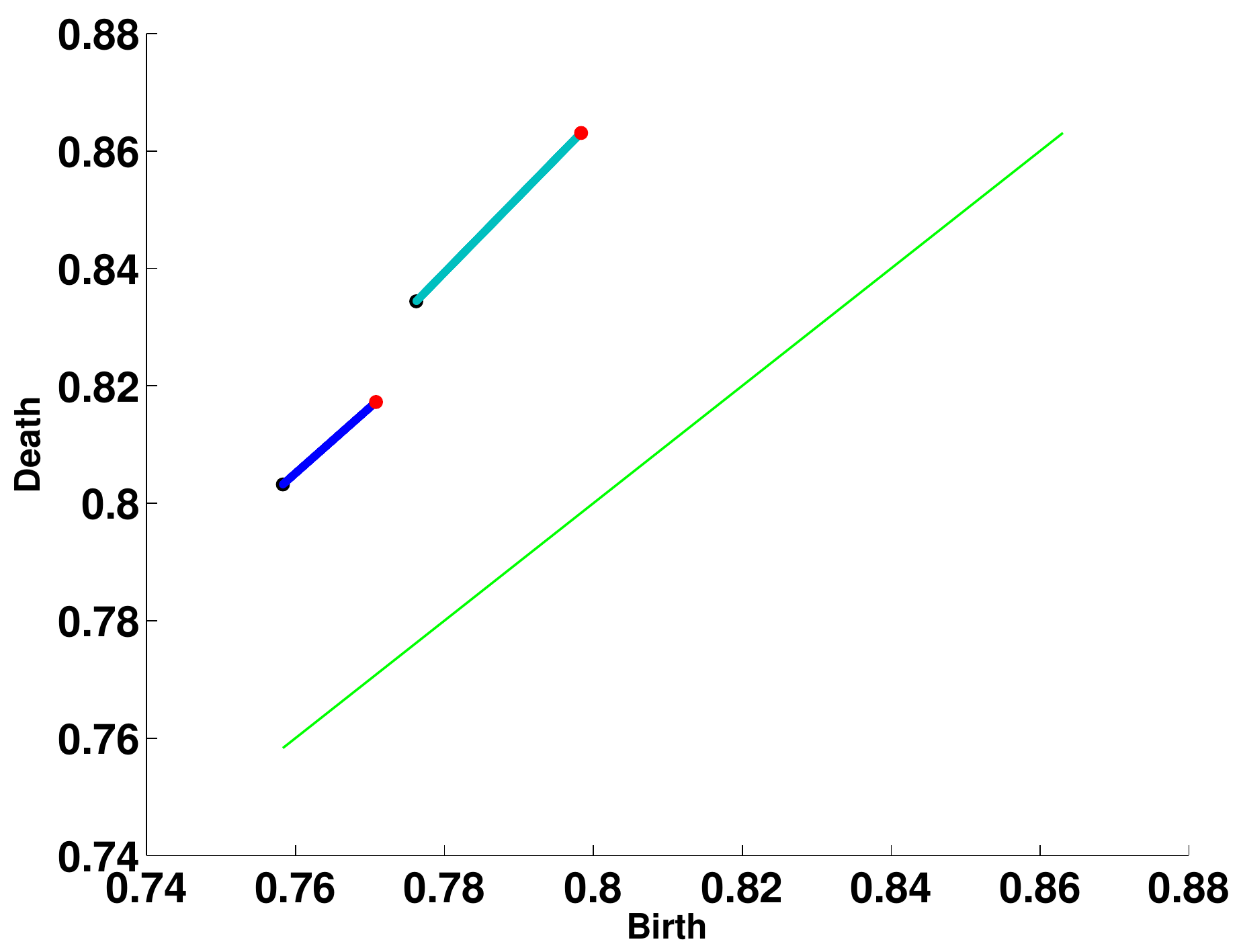}
\caption{Persistence diagrams $\pdiag_1$ during the continuation process. The black dots
represent the initial persistence diagram and the red dots represent the final diagram.}
\label{fig:persistence_diagrams8}
\end{figure}

\end{ex}


\begin{ex}[Deformation of a dodecahedron]
\label{ex:dodecahedron_example}
In this example we take as the initial point cloud the vertices of a regular dodecahedron and
try to apply our continuation method 
to increase both the birth and the death radii of the $\pdiag_2$ generator.
In these computations we used $\|\Delta v\| = 0.01$ and $\epsilon = 10^{-3}$.
The continuation works all the way to the target persistence diagram.
Figure~\ref{fig:point_clouds9} shows the deformation of
the point cloud and Figure~\ref{fig:persistence_diagrams9} shows the
diagrams along the continuation.

\begin{figure}[htbp!]
\centering
\includegraphics[width=0.5\textwidth]{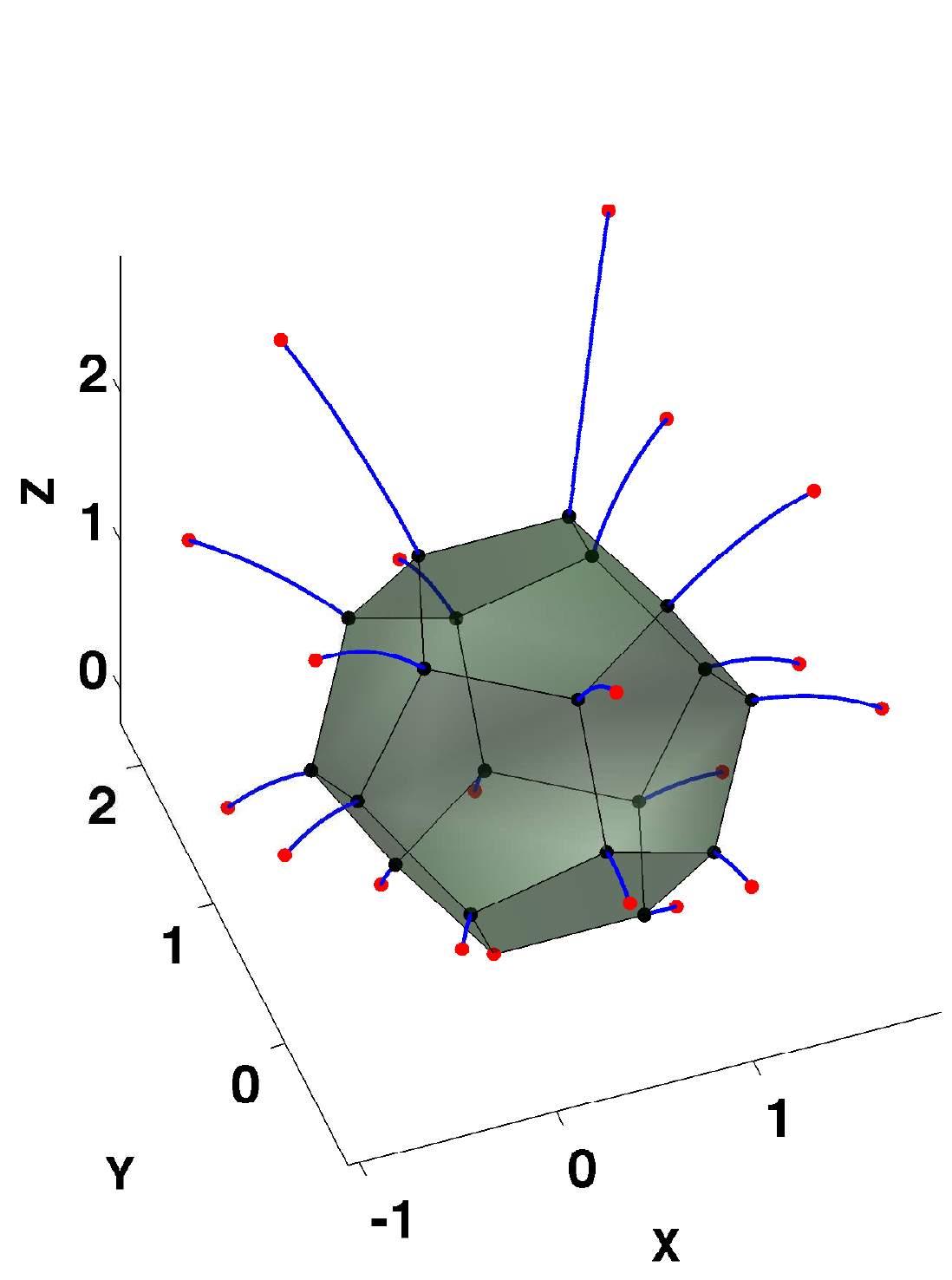}
\caption{Deformation of the point cloud during continuation. The black dots are the initial
point cloud and the red dots are the final one. The blue curves show the movement
of the points during the continuation.}
\label{fig:point_clouds9}
\end{figure}

\begin{figure}[htbp!]
\centering
\includegraphics[width=0.45\textwidth]{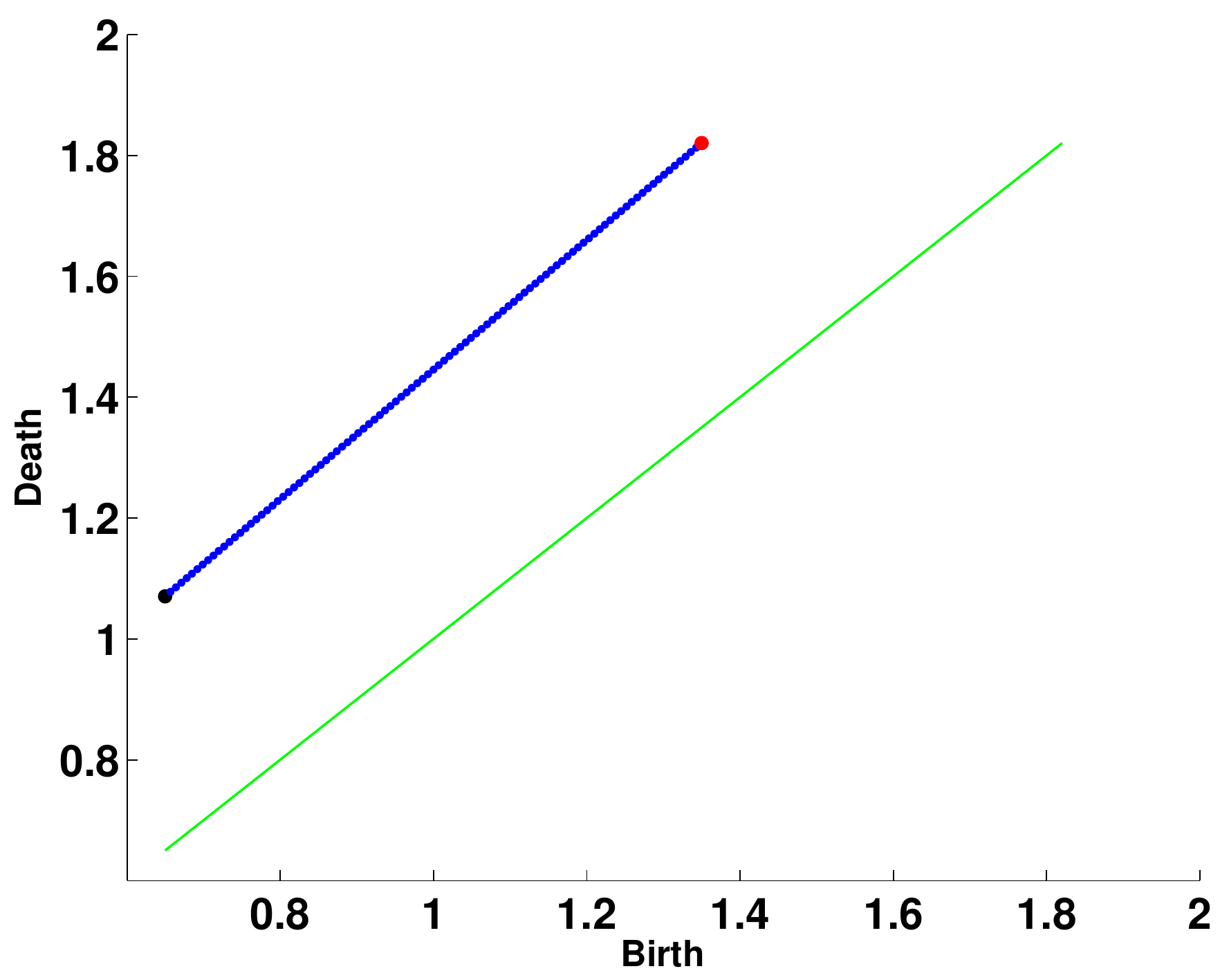}
\caption{Persistence diagrams $\pdiag_2$ during the continuation process. The black dot
represents the initial persistence diagram and the red dot represents the final diagram.}
\label{fig:persistence_diagrams9}
\end{figure}

\end{ex}


\begin{ex}[Deformation of a sphere]
\label{ex:sphere_example}
In this example we take as the initial point cloud $100$ uniform point on a sphere and
try to apply our continuation method to the largest generator of
$\pdiag_2$. In these computations we used $\|\Delta v\| = 0.03$ and $\epsilon = 10^{-5}$.
The continuation works all the way to the target persistence diagram. 
Figure~\ref{fig:point_clouds14}
shows the deformation of the point cloud and Figure~\ref{fig:persistence_diagrams14}
shows the diagrams along the continuation.

\begin{figure}[htbp!]
\centering
\includegraphics[width=0.5\textwidth]{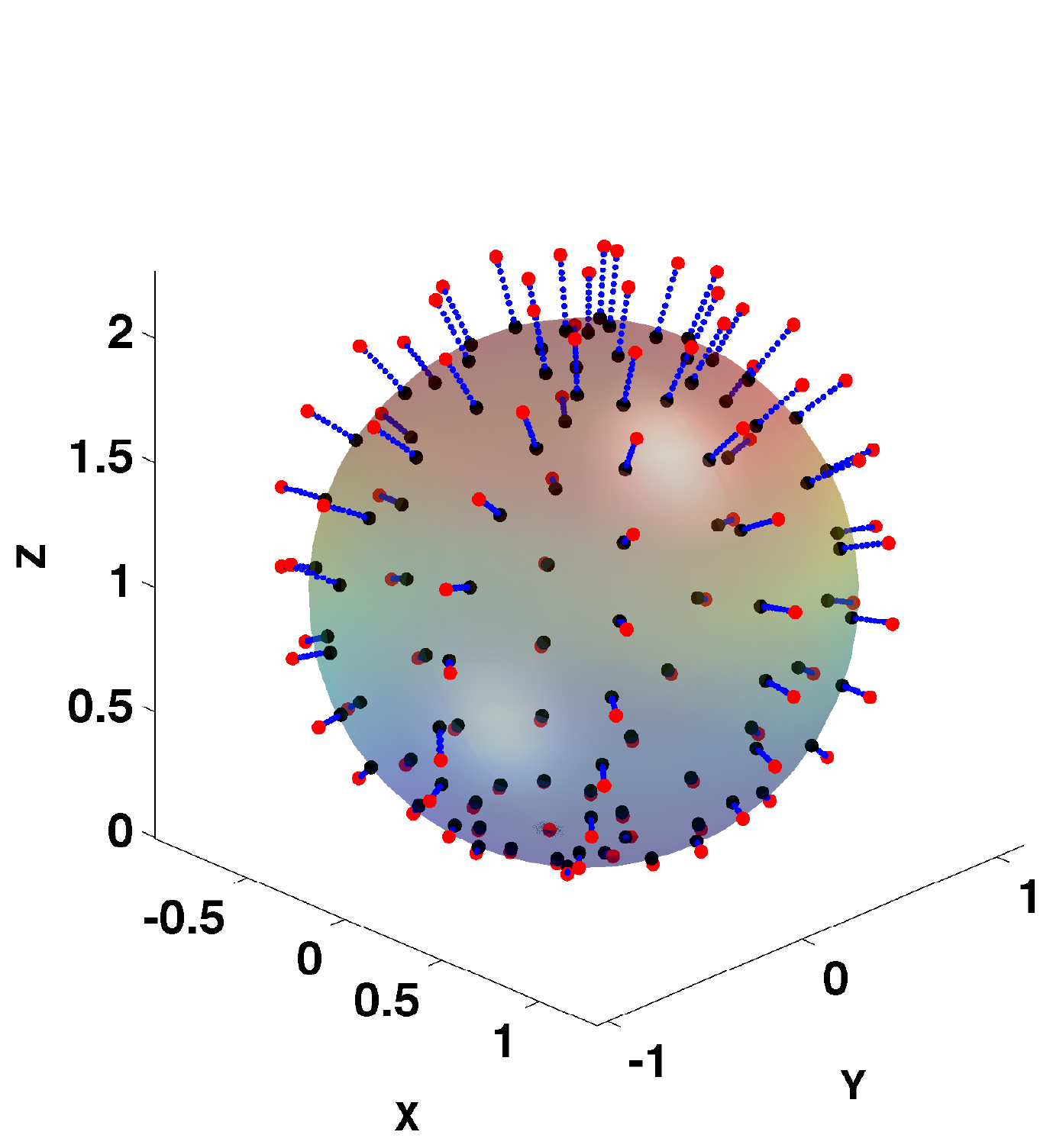}
\caption{Deformation of the point cloud during continuation. The black dots are the initial
point cloud and the red dots are the final one. The blue dots show the movement
of the points during the continuation.}
\label{fig:point_clouds14}
\end{figure}

\begin{figure}[htbp!]
\centering
\includegraphics[width=0.45\textwidth]{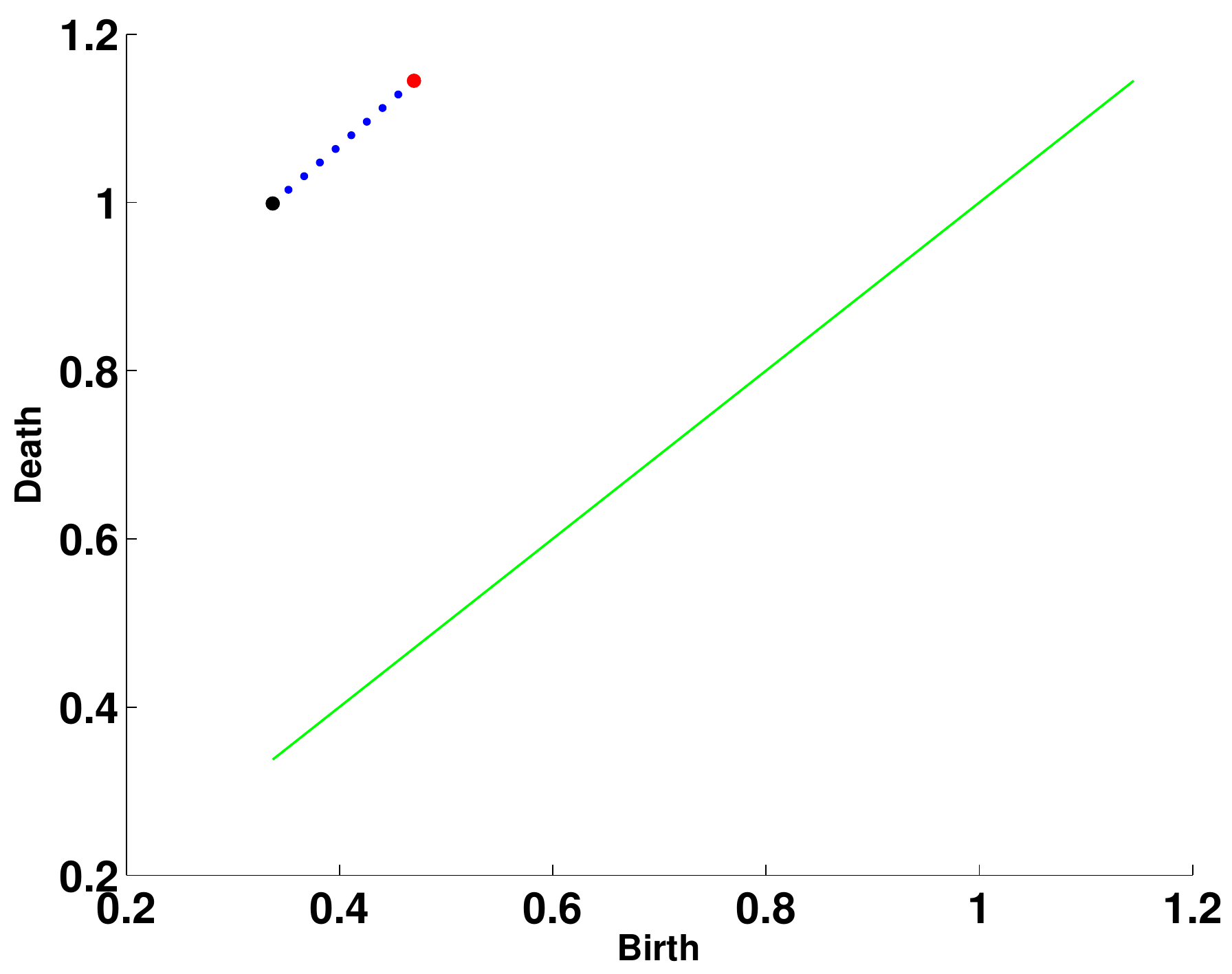}
\caption{Persistence diagrams $\pdiag_2$ during the continuation process. The black dot
represents the initial persistence diagram and the red dot represents the final diagram.}
\label{fig:persistence_diagrams14}
\end{figure}

\end{ex}


\section{Conclusion}
\label{sec:conclusion}

In this paper, we have studied the continuation of point clouds by persistence diagrams. In the following,
we list some future improvement of our method.

\begin{enumerate}
\setlength{\parskip}{0cm}
\setlength{\itemsep}{0cm}
\item In the presented method, we have treated persistence diagrams in an Euclidean space whose dimension
is determined by the input persistence diagram. This vectorization is simple and describes the essential part
for the continuation method. However, it does not allow to change the number of generators in the persistence
diagrams during the continuation because of the fixed dimension of the Euclidean space. To overcome this
restriction, it can be useful to vectorize the persistence diagrams into a bigger space and construct a similar
continuation method. The space of persistence landscape \cite{landscape} or a vectorization using kernel
methods \cite{kernel} should be considered as possible candidates.
\item Our algorithm for computing persistence diagrams in this paper is not sophisticated,  and hence, there is room
for improvement. Standard reduction methods such as \cite{morse_reduction} can be implemented and will
reduce the computational cost. Furthermore, since the changes in the point clouds at each step of the
continuation is supposed to be  small, the vineyard algorithm \cite{vineyard} can effectively work for fast
computations. 
\end{enumerate}

\section*{Acknowledgments} 
The authors would like to thank Shouhei Honda for useful discussions.
M. G. was partially supported by FAPESP grants 2013/07460-7 and 2010/00875-9,
and by CNPq grant 306453/2009-6, Brazil. Y. H. and I. O. were partially supported
by JSPS Grant-in-Aid (24684007, 26610042).

\section*{Appendix. Image of the Persistence Map of $\pdiag_1$ for a triangle}

\begin{thm}
\label{them:ratio_bound_triangle}
  If a point cloud $P$ has only three points,
  $\pdiag_1(\alp(P))$ has at most one birth-death pair. 
  If $\pdiag_1(\alp(P))$ has such a pair $(b, d)$,
  the ratio $d/b$ is smaller than or equal to $2/\sqrt{3}$.
  Moreover,
  $d/b = 2/\sqrt{3}$ if and only if
  the triangle is regular.
\end{thm}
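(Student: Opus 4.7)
The plan is to reduce the theorem to a direct analysis of the alpha filtration of three points and then perform an elementary optimization over triangle geometry. First I would verify the ``at most one pair'' assertion by observing that, for $|P|=3$, the Delaunay complex $\delaunay(P)$ contains at most three vertices, three edges, and one triangle; consequently $H_1$ of every subcomplex in the filtration has rank at most one, and only one birth--death pair can appear in $\pdiag_1(\alp(P))$. Collinear or otherwise degenerate configurations contain no $2$-simplex, forcing $\pdiag_1$ to be empty, so they require no further attention.

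Next I would identify the birth and death radii explicitly. Assume the three points form a non-degenerate triangle with side lengths $a \leq b \leq c$ opposite to angles $\alpha \leq \beta \leq \gamma$, and let $R$ be the circumradius. The unique $2$-simplex $\tau$ is attaching because its circumsphere is automatically $P$-empty, so $r_\tau = R$. For each edge $\sigma$, the smallest ball having $\sigma$ on its boundary is the diameter ball of $\sigma$, and this ball is $P$-empty precisely when the opposite vertex lies outside its open interior, i.e., when the opposite angle is strictly less than $\pi/2$. In the acute case ($\gamma < \pi/2$) all three edges are attaching and the filtration orders them as $a/2 < b/2 < c/2 < R$; the $H_1$ generator is born when the cycle closes at time $c/2$ and is killed by $\tau$ at time $R$, producing $(b,d) = (c/2, R)$. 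In the right or obtuse case ($\gamma \geq \pi/2$) the edge $c$ is either non-attaching or shares its birth time with $\tau$, so $r_c = r_\tau = R$; no pair with $b < d$ can therefore be produced.

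For the ratio bound, I would apply the law of sines: $c = 2R \sin \gamma$, whence $d/b = 2R/c = 1/\sin \gamma$. Since $\gamma$ is the largest angle in a triangle, $\gamma \geq \pi/3$; acuteness gives $\gamma < \pi/2$; hence $\sin \gamma \in [\sqrt{3}/2,\, 1)$ and
\[
	\frac{d}{b} = \frac{1}{\sin\gamma} \in \left(1,\, \frac{2}{\sqrt 3}\right].
\]
Equality $d/b = 2/\sqrt 3$ forces $\gamma = \pi/3$, and combined with $\gamma$ being maximal and the angle sum being $\pi$ this forces $\alpha = \beta = \gamma = \pi/3$; the converse is immediate.

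The one step that requires care is the non-acute case: I must confirm that the simultaneous arrival of the longest edge and the triangle at filtration value $R$ produces no off-diagonal point in $\pdiag_1$. This follows from the structure theorem, which only records summands $(z^{b})/(z^{d})$ with $b < d$, together with the observation that the cycle born when $c$ enters is bounded by $\pm\partial\tau$ at the very same parameter value, so the summand collapses. Once this point is handled, the rest is a direct application of the law of sines together with the elementary bound $\gamma \geq \pi/3$ on the largest angle of a triangle.
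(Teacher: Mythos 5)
Your proof is correct and follows essentially the same route as the paper's: both identify the death time as the circumradius $R$ and the birth time as half the longest edge, then reduce the ratio bound to an elementary optimization over the angles of an acute triangle (your $d/b = 1/\sin\gamma$ with $\gamma \geq \pi/3$ is just a reparametrization of the paper's minimization of $\max_i \cos\theta_i$ subject to $\theta_0+\theta_1+\theta_2 = \pi/2$, via the inscribed-angle relation $\cos\theta_i = \sin\alpha_i$). The only difference is one of detail: you justify the acute/non-acute dichotomy and the birth--death identification directly from the definition of the alpha filtration, where the paper simply cites ``basic properties of alpha filtrations.''
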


\begin{proof}
  From the basic properties of alpha filtrations, 
  $\pdiag_1(\alp(P)) = \{(b,d)\}$ if and only if the triangle is
  acute. If not, $\pdiag_1(\alp(P))$ is empty.  
  Hence we assume that the triangle is acute.
  
  Let $p_0, p_1, p_2$ be the three vertices of the triangle, $c$ be the
  center of the circumcircle, $r$ be the radius of the circumcircle, and
  $\theta_0, \theta_1, \theta_2$ be $\angle c p_1 p_2$,
  $\angle c p_2 p_0$, and $\angle c p_0 p_1$, respectively 
  (Figure~\ref{fig:triangle_birthdeath}).
 
  \begin{figure}[htbp]
    \centering
    \includegraphics[width=0.2\textwidth]{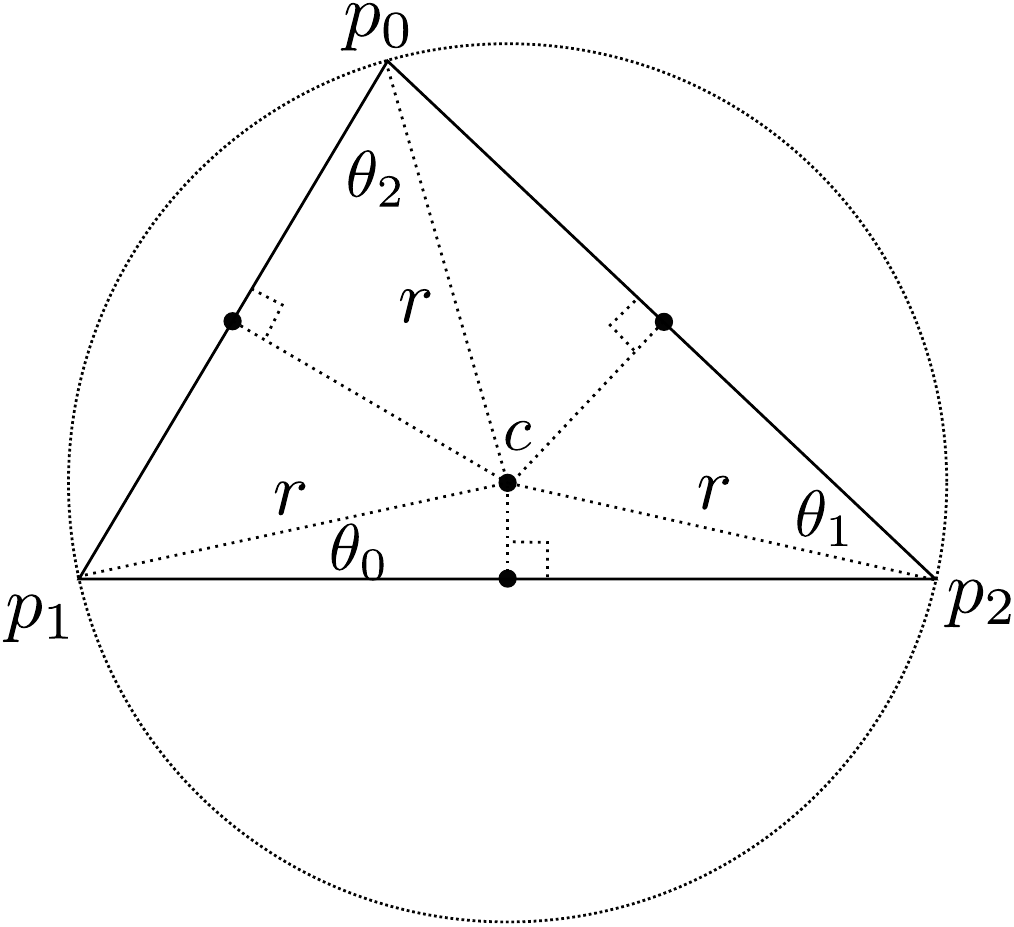}
    \caption{The relation of the circumcircle and internal angles of the
      triangle $\{p_0, p_1, p_2\}$}
    \label{fig:triangle_birthdeath}
  \end{figure}
  
  Hence, $|p_1 p_2| = 2r \cos\theta_0$, $|p_2 p_0| = 2r \cos\theta_1$,
  $|p_0 p_1| = 2r \cos\theta_2$ and the birth time $b$ is
  \begin{align*}
    b =\max\{ r \cos\theta_0, r \cos\theta_1, r \cos\theta_2\},
  \end{align*}
  and the death time $d$ is $r$.
  The ratio of $d/b$ is
  \begin{align*}
    &r / \max\{ r \cos\theta_0, r \cos\theta_1, r \cos\theta_2\} \\
    =& (\max\{ \cos\theta_0, \cos\theta_1, \cos\theta_2\})^{-1}.
  \end{align*}
  Hence the problem is minimizing
  \begin{align*}
    \max\{ \cos\theta_0, \cos\theta_1, \cos\theta_2\}
  \end{align*}
  subject to
  \begin{align*}
    &\theta_0, \theta_1, \theta_2 > 0 \mbox{ (the triangle is acute) and}\\
    &\theta_0 + \theta_1 + \theta_2 = \pi/2
      \mbox{ (sum of the internal angles)}.
  \end{align*}
  Since $\cos \theta$ is monotonely decreasing 
  on the interval $[0, \pi/2]$,
  a minimum attains at
  $\theta_0 = \theta_1 = \theta_2 = \pi/6$ and
  the minimum is $\cos(\pi/6) = \sqrt{3}/2$.
\end{proof}
\section*{References}

\end{document}